\documentclass[11pt,reqno,letterpaper]{amsart}

\usepackage{amsmath,amsthm,amssymb,mathtools,bbm,enumerate}
\usepackage{graphicx}
\usepackage{bookmark}
\usepackage{hyperref}
\hypersetup{pdfstartview={FitH}}
\usepackage[british]{babel}

\theoremstyle{plain}
\newtheorem{theorem}{Theorem}[section]
\newtheorem{proposition}[theorem]{Proposition}
\newtheorem{corollary}[theorem]{Corollary}
\newtheorem{lemma}[theorem]{Lemma}

\numberwithin{equation}{section}

\renewcommand{\leq}{\leqslant}

\renewcommand{\geq}{\geqslant}

\newcommand{\R}{\mathbb{R}}
\newcommand{\T}{\mathbb{T}}
\newcommand{\N}{\mathbb{N}}
\newcommand{\Z}{\mathbb{Z}}
\newcommand{\1}{\mathbbm{1}}
\newcommand{\dd}{\,\textup{d}}

\begin{document}

\title[Estimates for $\textup{L}^p$ variants of Gowers norms]{Estimates for $\textup{L}^p$ variants of Gowers norms}

\author[V. Kova\v{c}]{Vjekoslav Kova\v{c}}
\address[V.\,K.]{University of Zagreb Faculty of Science, Department of Mathematics, Bijeni\v{c}ka cesta 30, 10000 Zagreb, Croatia}
\email{vjekovac@math.hr}

\author[K. M. Rogers]{Keith M. Rogers}
\address[K.\,R.]{Instituto de Ciencias Matem\'{a}ticas CSIC-UAM-UC3M-UCM, 28049 Madrid, Spain}
\email{keith.rogers@icmat.es}

\subjclass[2020]{Primary 26D15; 
Secondary 11B30, 
43A70, 
94A17} 

\keywords{Gowers norm, sharp estimate, near-extremiser, Shannon entropy}

\begin{abstract}
Motivated by a log-convexity question of Bennett and Tao, we consider Gowers-type functionals defined by $\textup{L}^p$ norms of multiple autocorrelations. We prove sharp bounds in terms of $\textup{L}^q$ norms and characterise the near-extremisers on Euclidean spaces as well as locally compact abelian groups. We also establish two families of degree-lowering inequalities and study their near-extremisers. As a byproduct of this broader theory, we show that the constant in the log-convexity estimate for Gowers norms is strictly less than unity, confirming the aforementioned conjecture of Bennett and Tao. Finally, we characterise the values of the parameters for which these Gowers-type functionals necessarily satisfy the triangle inequality on nonnegative measurable functions.
\end{abstract}

\maketitle

\tableofcontents


\section{Introduction}

\subsection{Gowers norms, their estimates, and log-convexity}

Gowers introduced the uniformity norms $\|\cdot\|_{\textup{U}^d(G)}$ in his quantitative proof of Szemer\'edi's theorem \cite{Gowers2001}. Later they became an indispensable tool in numerous problems in arithmetic combinatorics; see, e.g.\@ the book by Tao and Vu \cite{TaoVu}. Host and Kra introduced ergodic-theoretic variants of these quantities \cite{HostKra05}. Their variants on Euclidean spaces naturally arise in connection with problems in geometric measure theory; see, e.g., \cite{CMP15} and \cite{DK22}.

Throughout, $(G,+)$ is a second-countable locally compact abelian (LCA) group\linebreak equipped with a Haar measure $\mu$, which is unique up to a positive scalar multiple. The second-countability assumption is not necessary for much of the paper, but it is convenient and also assumed in the work of Eisner and Tao \cite{EisnerTao}, on which we rely. In particular, $G$ is metrizable and $\sigma$-compact, which makes its Haar measure $\sigma$-finite and allows straightforward applications of Fubini's theorem.
For any complex-valued function $f$ on $G$, let $\Delta_h f$ be its \emph{multiplicative difference}, defined by
\[ \Delta_h f(x) := \overline{f(x)} f(x+h) \]
for $h,x\in G$.
For a measurable $f$ and an integer $d\geq1$ the \emph{Gowers uniformity (semi)norm} is defined by 
\[ \|f\|_{\textup{U}^d(G)} := \biggl( \int_{G^{d+1}} \Delta_{h_1} \cdots \Delta_{h_d} f(x) \dd\mu(x)\dd\mu(h_1)\cdots\dd\mu(h_d) \biggr)^{1/2^{d}}, \]
assuming that the integral exists.
For $d\geq2$, this is in fact a norm on the space of measurable functions $f$ satisfying $\|f\|_{\textup{U}^d(G)}<\infty$, modulo equality almost everywhere \cite{TaoVu,BennettTao}. For $d=1$ we obtain a simple seminorm
\[ \|f\|_{\textup{U}^1(G)} = \Bigl| \int_{G} f \dd\mu \Bigr|. \]
If $\mathcal{C}$ denotes the operator of complex conjugation, i.e., $\mathcal{C}z:=\overline{z}$ so that $\mathcal{C}^2 z=z$, then the above definition can be written explicitly as
\begin{align*}
\|f\|_{\textup{U}^d(G)} = \biggl( \int_{G^{d+1}} \prod_{(\omega_1,\ldots,\omega_d)\in\{0,1\}^d} 
\mathcal{C}^{d-(\omega_1+\cdots+\omega_d)} f\bigl(x+\omega_1 h_1+\cdots+\omega_d h_d\bigr) & \\[-4mm]
\dd\mu(x)\dd\mu(h_1)\cdots\dd\mu(h_d) & \biggr)^{1/2^{d}}. 
\end{align*}

One line of research concerns sharp $\textup{L}^p$ estimates for the Gowers norms. Namely, on every LCA group $G$ we have the inequality
\begin{equation}\label{eq:UGbyLp}
\|f\|_{\textup{U}^d(G)} \leq
\|f\|_{\textup{L}^{2^d/(d+1)}(G)},
\end{equation}
which follows easily from H\"{o}lder's inequality and mathematical induction; see, e.g.\@ \cite[Eq.\,(5)]{EisnerTao} or the proof of Theorem \ref{thm:Lpest}(a) below.
Eisner and Tao \cite[Thm.\,1.10]{EisnerTao} studied near-extremisers of \eqref{eq:UGbyLp}: if $d\geq2$, $\varepsilon>0$ is sufficiently small in terms of $d$, and $f\in\textup{L}^{2^d/(d+1)}(G)$ is not equal to zero $\mu$-a.e.\@ and satisfies
\[ \|f\|_{\textup{U}^d(G)} \geq (1-\varepsilon) \|f\|_{\textup{L}^{2^d/(d+1)}(G)}, \]
then there exist a compact open subgroup $H$ of $G$, an element $x_0\in G$, and a polynomial $P\colon H\to\R/\Z$ of degree at most $d-1$ such that
\begin{equation}\label{eq:ET_near}
\Bigl\| f - \frac{\|f\|_{\textup{L}^{2^d/(d+1)}(G)}}{\mu(H)^{(d+1)/2^d}}e^{2\pi i P(\cdot-x_0)} \1_{x_0+H} \Bigr\|_{\textup{L}^{2^d/(d+1)}(G)}
= o^{\varepsilon\to0}_{d}(1) \|f\|_{\textup{L}^{2^d/(d+1)}(G)}.
\end{equation}
Since $G=\R^n$ has no compact open subgroups, the corresponding best constant in \eqref{eq:UGbyLp} is strictly less than $1$ when $d\geq2$ and the sharp Euclidean inequality \cite[Thm.\,1.12]{EisnerTao} reads
\begin{equation}\label{eq:UonRn}
\|f\|_{\textup{U}^d(\R^n)} \leq \Bigl( \frac{2^{2d}}{(d+1)^{d+1}} \Bigr)^{n/2^{d+1}} \|f\|_{\textup{L}^{2^d/(d+1)}(\R^n)}
\end{equation}
for positive integers $d$ and $n$. Eisner and Tao also characterised the cases of equality in \eqref{eq:UonRn}, while Christ \cite{ChristYoung2019} and Neuman \cite{Neuman2020} obtained a characterisation of its near-extremisers.

Another line of research studies log-convexity properties of the Gowers norms. Inequality \eqref{eq:UGbyLp} suggests that in this regard they might imitate the $\textup{L}^p$-norms with exponents $2^d/(d+1)$. Since
\[ \frac{d+1}{2^d} =\theta\frac{d}{2^{d-1}} + (1-\theta)\frac{d+2}{2^{d+1}} \]
for $\theta=d/(3d-2)$, Bennett and Tao \cite[Cor.\,2.4]{BennettTao} showed
\begin{equation}\label{eq:BTlogcv}
\|f\|_{\textup{U}^d(G)} \leq
\|f\|_{\textup{U}^{d-1}(G)}^{\theta}
\|f\|_{\textup{U}^{d+1}(G)}^{1-\theta}
\end{equation}
for every nonnegative measurable function $f$ on $G$.
In the setting of discrete groups $G$, the same inequality was previously discovered and established by Shkredov \cite[Prop.\,35]{Shkredov2014} (for indicator functions) and Manners \cite[Prop.\,2.1]{Manners2017} (for $d=2$ and finitely supported functions). Bennett and Tao presented it as a special case of the so-called adjoint Loomis--Whitney inequality.
They also asked \cite[Quest.\,10.1]{BennettTao} whether the constant $1$ in estimate \eqref{eq:BTlogcv} can be lowered in the case $G=\R$ and $d=2$, i.e., whether there exists a number $\varepsilon>0$ such that
\[ \|f\|_{\textup{U}^2(\R)} \leq (1-\varepsilon) \|f\|_{\textup{U}^{1}(\R)}^{1/2} \|f\|_{\textup{U}^{3}(\R)}^{1/2} \]
holds for every measurable function $f\colon\R\to[0,\infty)$. As suggestive evidence, they showed that the corresponding equality cannot be attained \cite[Prop.\,2.5]{BennettTao}, i.e., that one always has the strict inequality
\[ \|f\|_{\textup{U}^2(\R)} < \|f\|_{\textup{U}^{1}(\R)}^{1/2} \|f\|_{\textup{U}^{3}(\R)}^{1/2} \]
whenever all three Gowers norms are finite and positive.
The same question also naturally arose during the Q\&A session following Tao's seminar talk at the IAS in 2023 \cite{Tao_IAS}.

The following theorem is a more general result in this direction.

\begin{theorem}\label{thm:mainlog}
For every integer $d\geq2$, there exists $\varepsilon_d\in(0,1)$ such that
\begin{equation}\label{eq:mainlog}
\|f\|_{\textup{U}^d(G)} \leq (1-\varepsilon_d) \|f\|_{\textup{U}^{d-1}(G)}^{d/(3d-2)} \|f\|_{\textup{U}^{d+1}(G)}^{2(d-1)/(3d-2)}
\end{equation}
whenever $f\colon G\to[0,\infty)$ is measurable and $G$ has no compact open subgroup.
\end{theorem}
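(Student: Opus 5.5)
The approach is to track the Bennett--Tao proof of \eqref{eq:BTlogcv} and show that near-equality in it forces $f$ to be a near-extremiser of a sharp inequality whose near-extremisers are known to live on cosets of compact open subgroups. Since $G$ has no such subgroups, no $f$ can come close to equality. The argument is by contradiction with quantitative stability, not by compactness. If the desired $\varepsilon_d$ did not exist, then for every $\varepsilon>0$ there would be a group $G$ without compact open subgroups and a nonnegative $f$, with all three norms finite and positive, such that
\[ \|f\|_{\textup{U}^d(G)} \geq (1-\varepsilon) \|f\|_{\textup{U}^{d-1}(G)}^{d/(3d-2)} \|f\|_{\textup{U}^{d+1}(G)}^{2(d-1)/(3d-2)}. \]
The goal is to convert this into a near-extremality statement with a loss $o^{\varepsilon\to0}_d(1)$ that does not depend on $G$ or $f$.

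\emph{Step 1: decomposition of \eqref{eq:BTlogcv} into sharp inequalities.} For $h=(h_1,\ldots,h_{d-1})\in G^{d-1}$ set $F_h:=\Delta_{h_1}\cdots\Delta_{h_{d-1}}f\geq0$. Then the three norms are expressed through $F_h$:
\begin{itemize}
\item $\|f\|_{\textup{U}^{d-1}}^{2^{d-1}}=\int_{G^{d-1}}F_h(x)\dd x\dd h'$, up to the last variable, computed one level down;
\item $\|f\|_{\textup{U}^d}^{2^d}=\int\bigl(\int F_h\bigr)^2\dd h$;
\item $\|f\|_{\textup{U}^{d+1}}^{2^{d+1}}=\int\!\!\int (F_h\ast\widetilde F_h)(t)^2\dd t\dd h$.
\end{itemize}
Rather than the adjoint Loomis--Whitney formulation, I would rewrite the interpolation \eqref{eq:BTlogcv} as a composition of two kinds of step. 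The first kind is H\"older/log-convexity of $\textup{L}^p$ norms of the multiple autocorrelations $t\mapsto\Delta_{h}f$ integrated in $x$; these are exactly the $\textup{L}^p$ Gowers-type functionals of the paper. The second kind is the sharp $\textup{L}^p$--$\textup{L}^q$ bound for these functionals, namely Theorem~\ref{thm:Lpest}(a) or the degree-lowering inequalities. Each step has constant $1$ on a general LCA group, and the product of the constants reproduces the constant $1$ in \eqref{eq:BTlogcv}.

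\emph{Step 2: propagation of near-equality.} The ratio in \eqref{eq:mainlog} being at least $1-\varepsilon$ forces every step in the chain to be an equality up to a factor $1-O_d(\varepsilon)$. In particular, it forces near-equality in the step of the form $\|f\|_{\textup{U}^k(G)}\leq\|f\|_{\textup{L}^{2^k/(k+1)}(G)}$ for some $k\geq2$ (I expect $k=d$), or in its $\textup{L}^p$-variant for which the paper characterises near-extremisers. Applying the Eisner--Tao stability result \eqref{eq:ET_near}, or its analogue for the $\textup{L}^p$ functionals, would yield a compact open subgroup $H$ of $G$ and $x_0$ with $f$ close in $\textup{L}^{2^k/(k+1)}$ to a multiple of $\1_{x_0+H}$ times a phase. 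That contradicts the hypothesis on $G$ once $\varepsilon$ is below the threshold in the Eisner--Tao theorem, which depends only on $d$. This gives $\varepsilon_d>0$ directly. Alternatively, I would first note that on such $G$ the best constant in \eqref{eq:UGbyLp} is at most $1-\delta_d$, and then insert this improved constant into the chain.

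\emph{Main obstacle.} The hard part is Step~1: arranging the proof of \eqref{eq:BTlogcv} so that it genuinely passes through a sharp inequality that is rigid, that is, one whose only near-extremisers are compact-open-subgroup structures. Several H\"older steps are also tight for non-rigid functions, so one must check that near-equality in those steps does not absorb all of the slack. Moreover, the normalisation must be scale-invariant, so that near-equality transfers with errors uniform in $f$. I would first try the $\textup{L}^p$ Gowers functional with exponent $p$ chosen so that \eqref{eq:BTlogcv} factors as a degree-lowering inequality (from $\textup{U}^{d+1}$ to the $\textup{L}^p$-functional) composed with the sharp $\textup{L}^q$ bound of Theorem~\ref{thm:Lpest}, whose near-extremisers the paper characterises.
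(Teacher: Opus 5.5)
There is a genuine gap in Step~1. In the form you describe it cannot be carried out. A step such as $\|f\|_{\textup{U}^k(G)}\leq\|f\|_{\textup{L}^{2^k/(k+1)}(G)}$, or $[f]_{\textup{U}^{k,p}(G)}\leq\|f\|_{\textup{L}^{2^kp/(k+p)}(G)}$ with $p$ in the open range of Theorem~\ref{thm:Lpest}(b), leaves a Lebesgue norm $\|f\|_{\textup{L}^q(G)}$ with $q>1$ on the larger side. The rest of a constant-$1$ chain would then have to control it by $\|f\|_{\textup{U}^{d-1}(G)}$ and $\|f\|_{\textup{U}^{d+1}(G)}$. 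These Gowers norms do not control $\textup{L}^q$ norms with $q>1$, even for nonnegative $f$. Take $f=\sum_{a\in A}\phi(\cdot-a)$ for a fixed bump $\phi\geq0$ and a widely spaced Sidon set $A\subset\R$ with $|A|=N$. Then $\|f\|_{\textup{L}^q(\R)}\asymp N^{1/q}$ and $\|f\|_{\textup{U}^1(\R)}\asymp N$. Since every parallelepiped in a Sidon set is degenerate, also $\|f\|_{\textup{U}^k(\R)}\asymp N^{2^{1-k}}$ for $k\geq2$. For $d=2$ this gives $\|f\|_{\textup{L}^{4/3}}\asymp N^{3/4}$, whereas $\|f\|_{\textup{U}^1}^{1/2}\|f\|_{\textup{U}^3}^{1/2}\asymp N^{5/8}$.

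The only Lebesgue norm on the right-hand side is $\|f\|_{\textup{L}^1}=\|f\|_{\textup{U}^1}$ when $d=2$. The matching step $[f]_{\textup{U}^{2,2/3}}\leq\|f\|_{\textup{L}^1}$ is the endpoint $p=d/(2^d-1)$ of Theorem~\ref{thm:Lpest}(a), which part~(b) excludes. Its proof gets rigidity only by interpolating with $p=1$, and that is unavailable at the endpoint. Near-equality in this degree-lowering step gives only the tensor structure of Theorem~\ref{thm:diffUdp}(b), not a compact open subgroup. For the same reason, an improved constant in \eqref{eq:UGbyLp} has nowhere to be inserted. You rightly flag that the H\"older steps can absorb the slack, but the proposal has no mechanism to overcome this.

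The missing idea is to get rigidity from the autocorrelations $R_df$ and $R_{d-1}f$ rather than from $f$ itself. The paper proves \eqref{eq:BTlogcv} in two ways:
\begin{itemize}
\item via $[f]_{\textup{U}^{d,d/(2d-1)}}\leq\|f\|_{\textup{U}^{d-1}}$ and log-convexity of $p\mapsto\|R_df\|_{\textup{L}^p}$ at $p=d/(2d-1),1,2$;
\item via the reverse degree-lowering inequality with $p=2$ and log-convexity of $\|R_{d-1}f\|_{\textup{L}^p}$ at $p=1,2,6(d-1)/(3d-4)$.
\end{itemize}
After normalising $\|f\|_{\textup{U}^{d-1}}=\|f\|_{\textup{U}^{d+1}}=1$, near-equality in both chains pins three moments of each autocorrelation near $1$. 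Lemma~\ref{lem:momrig} then makes $R_df$ and $R_{d-1}f$ close to indicators. Combined with Theorem~\ref{thm:diffUdp}(b) and Lemma~\ref{lem:cylLWst}, this gives $R_df\approx\1_{A_1\times\cdots\times A_d}$ in both $\textup{L}^1$ and $\textup{L}^2$, with $\mu(A_i)\approx1$. A two-dimensional slice and the diagonal-leakage bound of Lemma~\ref{lem:diagleak} produce a symmetric set $K$ with $\mu(K)\approx1$ and $\langle\1_K\ast\1_K,\1_K\rangle\approx1$. This contradicts Fournier's uniform gap in Young's inequality, Lemma~\ref{lem:unifYgap}. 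The final rigid inequality is therefore Young's inequality for an indicator built from $R_df$, not a Lebesgue bound for $f$.
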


Note that $\varepsilon_d$ is uniform over all second-countable locally compact abelian groups $G$ without a compact open subgroup. If $H$ is a compact open subgroup of $G$, then choosing $f=\1_H$ and observing
\[ \|\1_H\|_{\textup{U}^d(G)}=\mu(H)^{(d+1)/2^d} \in(0,\infty), \]
we obtain equality in \eqref{eq:BTlogcv}: the absence of compact open subgroups is therefore necessary as well as sufficient.
It should also be possible to characterise the near-extremisers of \eqref{eq:BTlogcv} on general LCA groups $G$ as those functions that are, in a certain sense, close to multiples of indicators of cosets of compact open subgroups of $G$. We do not pursue this here because our proof yields only closeness in a different metric, rather than a clean statement analogous to \eqref{eq:ET_near}.

Theorem \ref{thm:mainlog} applies to the particular case $G=\R$ and confirms the conjecture of Bennett and Tao.
Note that $\varepsilon_d$ cannot be uniform in $d$. Namely, substituting the Gaussian $f(x)=e^{-\pi x^2}$ yields that the best constant in \eqref{eq:mainlog} is at least
\begin{equation}\label{eq:logGauss}
2^{-d(d-1)/((3d-2)2^{d+1})},
\end{equation}
which converges to $1$ as $d\to\infty$. Moreover, it is easy to see that Gaussians are not extremisers of \eqref{eq:mainlog}. Plugging in the indicator $f=\1_I$ of any bounded interval $I\subset\R$ gives the ratio
\[ \frac{\|\1_I\|_{\textup{U}^d(\R)} }{ \|\1_I\|_{\textup{U}^{d-1}(\R)}^{d/(3d-2)} \|\1_I\|_{\textup{U}^{d+1}(\R)}^{2(d-1)/(3d-2)} }
= \Bigl( \frac{2\,d!\,(d+2)^{d-1}}{(d+1)^{2d-1}} \Bigr)^{1/((3d-2)2^d)}, \]
which is larger than \eqref{eq:logGauss} for every $d\geq2$; also see \cite[Thm.\,6.1]{BennettTao} for more general inequalities for which the Gaussians are not the extremisers either. Our proof of \eqref{eq:mainlog} could give effective but rather poor bounds on $\varepsilon_d$, so we are not in a position to find the best constant, even on the real line $G=\R$.

\subsection{Generalised Gowers functionals on LCA groups}

Bennett and Tao \cite[Sec.\,2]{BennettTao} introduced, in passing, a certain \emph{multiple autocorrelation function} $R_d f$. For every integer $d\geq1$ and measurable function $f$ on $G$, define
\begin{align*}
R_d f (h_1,\ldots,h_d)
& := \int_{G} \Delta_{h_1} \cdots \Delta_{h_d} f(x) \dd\mu(x) \\
& \,= \int_{G} \prod_{(\omega_1,\ldots,\omega_d)\in\{0,1\}^d} \mathcal{C}^{d-(\omega_1+\cdots+\omega_d)} f\bigl(x+\omega_1 h_1+\cdots+\omega_d h_d\bigr) \dd\mu(x).
\end{align*}
Here we assume that a complex-valued measurable $f$ is such that the above integral exists for a.e.\@ $(h_1,\ldots,h_d)\in G^d$, while we interpret it as an element of $[0,\infty]$ in the case of a nonnegative measurable function $f$.
We go one step further and, for every $p\in(0,\infty]$, define a Gowers-like quantity,
\begin{equation}\label{eq:defUdp}
[f]_{\textup{U}^{d,p}(G)} := \|R_d f\|_{\textup{L}^p(G^d)}^{1/2^d}.
\end{equation}
These quantities will play crucial roles in our proof of Theorem \ref{thm:mainlog}. 

It is easy to see that
\begin{equation}\label{eq:RL12}
\left. \begin{aligned}
& [f]_{\textup{U}^{d,1}(G)} = \|f\|_{\textup{U}^{d}(G)}  \ \text{ for nonnegative }f, \\
& [f]_{\textup{U}^{d,2}(G)} = \|f\|_{\textup{U}^{d+1}(G)} \ \text{ for complex }f\text{ such that }[|f|]_{\textup{U}^{d,2}(G)}<\infty, \\ 
& [f]_{\textup{U}^{d,\infty}(G)} = \|f\|_{\textup{L}^{2^d}(G)} \ \text{ for complex }f\in\textup{L}^{2^d}(G).
\end{aligned} \right\}
\end{equation}
Also, for any fixed $d,p$ and a measurable $f\colon G\to[0,\infty)$,
\[ [f]_{\textup{U}^{d,p}(G)}=0 \quad\Longleftrightarrow\quad f=0 \text{ $\mu$-a.e.\@ on $G$}. \]
We record for later use that, whenever $H$ is a compact open subgroup and $0<p<\infty$,
\begin{equation}\label{eq:subgfun}
R_d\1_H=\mu(H)\1_{H^d},\quad
[\1_H]_{\textup{U}^{d,p}(G)} = \mu(H)^{(d+p)/(2^d p)}.
\end{equation}
Yet another useful formula is the marginal identity
\begin{equation}\label{eq:Rmarg}
\int_G R_d f(h_1,\ldots,h_d)\dd\mu(h_i) =\bigl|R_{d-1}f(h_1,\ldots,h_{i-1},h_{i+1},\ldots,h_d)\bigr|^2
\end{equation}
for $d\geq2$ and any $1\leq i\leq d$, when $f$ is nonnegative or the relevant integrals are absolutely convergent.

The following easy inequalities hold for a fixed degree $d\geq1$ and exponents $p,p_1,p_2\in(0,\infty]$ related by 
\[ \frac{1}{p} = \frac{\vartheta}{p_1} + \frac{1-\vartheta}{p_2} \quad \text{for some }\vartheta\in[0,1]. \]
If $f\colon G\to[0,\infty)$ is measurable, then the log-convexity of the Lebesgue norms applied to $R_d f$ yields
\begin{equation}\label{eq:just_log_conv}
[f]_{\textup{U}^{d,p}(G)} \leq [f]_{\textup{U}^{d,p_1}(G)}^{\vartheta} [f]_{\textup{U}^{d,p_2}(G)}^{1-\vartheta}.
\end{equation}
More generally, if $f_1,f_2\colon G\to[0,\infty)$ are now two measurable functions, then H\"{o}lder's inequality first gives
\[ R_d \bigl(f_1^{\vartheta} f_2^{1-\vartheta}\bigr) \leq (R_d f_1)^{\vartheta} (R_d f_2)^{1-\vartheta} \]
and then
\begin{equation}\label{eq:just_Hoelder}
\bigl[f_1^{\vartheta} f_2^{1-\vartheta}\bigr]_{\textup{U}^{d,p}(G)} \leq [f_1]_{\textup{U}^{d,p_1}(G)}^{\vartheta} [f_2]_{\textup{U}^{d,p_2}(G)}^{1-\vartheta}.
\end{equation}

For integer values of $p$ and nonnegative functions $f$, the quantities $[\,\cdot\,]_{\textup{U}^{d,p}(G)}$ can be rewritten as
\begin{equation}\label{eq:rewriteUdp}
\begin{aligned}
[f]_{\textup{U}^{d,p}(G)}^{2^d p}
= \int_{G^{p+d}} \prod_{j=1}^p \prod_{(\omega_1,\ldots,\omega_d)\in\{0,1\}^d}
& f\bigl(x_j + \omega_1 h_1 + \cdots + \omega_d h_d\bigr) \\
& \dd\mu(x_1) \cdots \dd\mu(x_p) \dd\mu(h_1) \cdots \dd\mu(h_d). 
\end{aligned}
\end{equation}
If, additionally, the group $G$ is compact with the Haar measure $\mu$ normalised as $\mu(G)=1$, then we can introduce additional averages over $G$ to rewrite this further as
\begin{align*}
[f]_{\textup{U}^{d,p}(G)}^{2^d p}
= \int_{G^{p+2d}} & \prod_{\substack{1\leq j\leq p\\ 1\leq j_1\leq 2\\ \cdots\\ 1\leq j_d\leq 2}} f\bigl(x_j + x^{(1)}_{j_1} + \cdots + x^{(d)}_{j_d}\bigr) \\
& \prod_{1\leq j\leq p} \dd\mu(x_j) \prod_{1\leq j_1\leq 2} \dd\mu\bigl(x^{(1)}_{j_1}\bigr) \cdots \prod_{1\leq j_d\leq 2} \dd\mu\bigl(x^{(d)}_{j_d}\bigr).
\end{align*}
Shkredov \cite[Appen.]{Shkredov23} studied such expressions in the context of higher additive energies and named them the $E_{p,2,\ldots,2}$ norms. Their properties can also be deduced from even more general hypergraph norms studied by Hatami \cite{Hatami}; those relevant here are associated with complete hypergraphs of type $(p,2,\ldots,2)$.
In particular, if $d\geq2$ and $p$ is an even positive integer, then the functionals $f\mapsto[f]_{\textup{U}^{d,p}(G)}$ are norms, while, if $p$ is an arbitrary positive integer, the same is true for $f\mapsto[|f|]_{\textup{U}^{d,p}(G)}$. 
See the precise formulations in Corollary \ref{cor:Udpnorms} and a sketch of its proof in Appendix \ref{sec:Udp_are_norms}.

Somewhat surprisingly, if $d\geq2$ and $p>0$ is not an integer, then $f\mapsto[f]_{\textup{U}^{d,p}(G)}$ is not necessarily subadditive on nonnegative functions. Namely, already in the case of the torus $\T=\R/\Z$, for every $d\geq2$ and every positive non-integer $p$ there exist nonnegative trigonometric polynomials $f_1$ and $f_2$ such that
\begin{equation}\label{eq:triangle_fails}
[f_1+f_2]_{\textup{U}^{d,p}(\T)} > [f_1]_{\textup{U}^{d,p}(\T)} + [f_2]_{\textup{U}^{d,p}(\T)}.
\end{equation}
A counterexample is presented in Appendix \ref{sec:Udp_not_norms} and it is strongly inspired by the constructions that resolved the Hardy--Littlewood majorant problem \cite{Boas62,Bachelis73,GreenRuzsa}.

Bennett and Tao have used the quantity $[f]_{\textup{U}^{d,d/(2d-1)}(G)}$ with the non-integral parameter $p=d/(2d-1)$, and it has played a role in their proof of \eqref{eq:BTlogcv}, but we are unaware of a systematic study of \eqref{eq:defUdp} for real values of $p>0$. Our intention is to study the family of functionals from \eqref{eq:defUdp}, parametrised by $d\in\N$ and $p\in(0,\infty)$, prove estimates between them, and investigate their relationship with the Lebesgue norms. The proof of Theorem \ref{thm:mainlog} will then come as a byproduct of this study.

Our first result on the quantities $[\,\cdot\,]_{\textup{U}^{d,p}(G)}$ is a generalisation of \eqref{eq:UGbyLp} and \eqref{eq:ET_near}.
Throughout the main body of the paper, we consider only nonnegative functions $f$.

\begin{theorem}\label{thm:Lpest}
\begin{enumerate}[(a)]
\item Let $d\geq2$ and $d/(2^d-1)\leq p<\infty$. For every measurable function $f\colon G\to[0,\infty)$ one has
\[ [f]_{\textup{U}^{d,p}(G)}
 \leq\|f\|_{\textup{L}^{2^d p/(d+p)}(G)}. \]
\item Let $d\geq2$, $d/(2^d-1)< p< \infty$, suppose that $\varepsilon$ is sufficiently small in terms of $d$ and $p$, and let $f\in\textup{L}^{2^d p/(d+p)}(G)$ be nonnegative, not equal to zero $\mu$-a.e., and such that
\[ [f]_{\textup{U}^{d,p}(G)} \geq (1-\varepsilon) \|f\|_{\textup{L}^{2^d p/(d+p)}(G)}. \]
Then there exist a compact open subgroup $H$ of $G$ and an element $x_0\in G$ such that
\[ \Bigl\| f - \frac{\|f\|_{\textup{L}^{2^d p/(d+p)}(G)}}{\mu(H)^{(d+p)/(2^d p)}}\1_{x_0+H} \Bigr\|_{\textup{L}^{2^d p/(d+p)}(G)}
= o^{\varepsilon\to0}_{d,p}(1) \|f\|_{\textup{L}^{2^d p/(d+p)}(G)}. \]
\end{enumerate}
\end{theorem}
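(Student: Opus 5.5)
We prove (a) by an interpolation argument in $p$, and (b) by reducing to the Eisner--Tao near-extremiser theorem behind \eqref{eq:ET_near}. Write $q_p:=2^dp/(d+p)$.

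\textbf{Part (a): endpoints.} The plan is to establish (a) at two endpoint exponents and interpolate.

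\emph{Endpoint $p=\infty$.} Here $q_\infty=2^d$, and $R_d f(h)\leq \|f\|_{\textup{L}^{2^d}}^{2^d}$ follows from H\"older's inequality with $2^d$ factors, each in $\textup{L}^{2^d}$. This gives the bound with constant $1$, in agreement with \eqref{eq:RL12}.

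\emph{Endpoint $p_0=d/(2^d-1)$.} Here $q_{p_0}=1$, so the claim reads $\|R_df\|_{\textup{L}^{p_0}(G^d)}\leq \|f\|_{\textup{L}^1}^{2^d}$. I would prove this by induction on $d$, using the marginal identity \eqref{eq:Rmarg} together with a H\"older/Minkowski step in one variable $h_d$. Alternatively, one can view it as an instance of the adjoint Loomis--Whitney-type inequality of Bennett--Tao, which is exactly the $p=d/(2d-1)$ mechanism they used. The reason to expect this: $R_df$ is an integral of a product of $2^d$ copies of $f$ over $x$, and integrating $R_d f$ over $G^d$ gives $\|f\|_1^{2^d}$, so an $\textup{L}^{p}$ bound with $p<1$ has the flavour of a Brascamp--Lieb adjoint.

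\textbf{Part (a): interpolation.} I would not interpolate $f\mapsto R_df$ directly, since it is multilinear and nonlinear in $f$. Instead I would use \eqref{eq:just_Hoelder} with the factorisation $f=f_1^{\vartheta}f_2^{1-\vartheta}$, where $f_1=f^{a}$ and $f_2=f^{b}$ are chosen so that the endpoint norms of $f_1,f_2$ are both powers of $\|f\|_{q_p}$. Concretely, take $f_1=f^{q_p}$ in $\textup{L}^1$ and $f_2=f^{q_p/2^d}$ in $\textup{L}^{2^d}$, with $\vartheta$ determined by $1/p=\vartheta/p_0$. The exponents then match, because $1/q_p$ is affine in $1/p$. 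This yields (a) for all $p\in[p_0,\infty)$.

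\textbf{Part (b).} Near-equality at an interior $p$ should force near-equality at the endpoints in the above chain: both H\"older steps must be nearly tight, including the one in $R_d(f_1^\vartheta f_2^{1-\vartheta})\leq (R_df_1)^\vartheta(R_df_2)^{1-\vartheta}$. Near-equality in the $p=\infty$ H\"older inequality forces $f$ to be nearly constant in modulus on its essential support, close to a multiple of an indicator of a set $A$. Near-equality at the $\textup{L}^1$ endpoint, or more directly for the Gowers $U^d$ norm via comparison with $p=1$, should then force $A$ to be nearly additively closed. This yields an approximate group, and by the Eisner--Tao structural result \eqref{eq:ET_near}, applied to $g=f^{q_p(d+1)/2^d}$ (which is nonnegative, so the phase polynomial is trivial), we get closeness to a coset $x_0+H$ of a compact open subgroup.

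To make this route work, I would instead compare $[f]_{\textup{U}^{d,p}}$ with $\|g\|_{\textup{U}^d}$ via a log-convexity between $p$ and $1$, namely \eqref{eq:just_log_conv} applied to a suitable power of $f$, so that $\|g\|_{\textup{U}^d}\geq(1-o(1))\|g\|_{\textup{L}^{2^d/(d+1)}}$. Transferring the conclusion of \eqref{eq:ET_near} from $g$ back to $f$ then uses continuity of $t\mapsto t^{s}$ in Lebesgue norms (Mazur-map estimates). The strict inequality $p>d/(2^d-1)$ is needed so that $\vartheta<1$ and the $\textup{L}^{2^d}$-side rigidity is genuinely present.

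\textbf{Main obstacle.} I expect the main obstacle to be this transfer: showing that near-equality for $[\,\cdot\,]_{\textup{U}^{d,p}}$ implies near-equality for a $U^d$ norm of some power of $f$. The first thing I would try is to rerun the interpolation proof of (a) with $p=1$ as one endpoint instead of $p_0$, when $p>1$ or $p<1$ as appropriate, and read off the defect.
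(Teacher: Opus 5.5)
Your proposal is correct in outline and is essentially the paper's proof. In (a) the paper uses exactly your factorisation $f=(f^{q_p})^{\vartheta}(f^{p/(d+p)})^{1-\vartheta}$ with $\vartheta=d/(p(2^d-1))$ in \eqref{eq:just_Hoelder} between the exponents $d/(2^d-1)$ and $\infty$. In (b) it applies Eisner--Tao to your $g=f^{p(d+1)/(d+p)}$, removes the phase by taking absolute values, and transfers back with the power-map estimates you describe.

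Two precisions. First, the endpoint $[f]_{\textup{U}^{d,d/(2^d-1)}(G)}\leq\|f\|_{\textup{L}^1(G)}$ comes from iterating the symmetric adjoint Loomis--Whitney step $[f]_{\textup{U}^{d,ds/(2(d-1)+s)}(G)}\leq[f]_{\textup{U}^{d-1,s}(G)}$ for general $0<s\leq2$, starting at $[f]_{\textup{U}^{1,1}(G)}=\|f\|_{\textup{L}^1(G)}$. The instance $s=1$ alone gives only the exponent $d/(2d-1)$ with $\|f\|_{\textup{U}^{d-1}(G)}$ on the right, which is the endpoint only for $d=2$. A H\"older/Minkowski step in $h_d$ alone will not suffice either, because the $h_d$-marginal does not control an $\textup{L}^{p}$ quasinorm with $p<1$. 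Second, your ``main obstacle'' disappears once you use \eqref{eq:just_Hoelder} rather than \eqref{eq:just_log_conv}, with the exponent pair $(d/(2^d-1),1)$ when $p<1$ and $(1,\infty)$ when $p\geq1$. After normalising $\|f\|_{\textup{L}^{q_p}(G)}=1$, this gives $\|g\|_{\textup{U}^d(G)}\geq1-O_{d,p}^{\varepsilon\to0}(\varepsilon)$ in one line.
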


Next, recall from \eqref{eq:UonRn} that the best constant in the corresponding estimate on Euclidean groups $G=\R^n$ is known. The same holds in greater generality, and near-extremisers can again be characterised.

\begin{theorem}\label{thm:sharpRn}
Let $d\geq2$ and $n\geq1$ be integers, and let $p\in[1,\infty)$.
\begin{enumerate}[(a)]
\item Every measurable function $f\colon \R^n\to[0,\infty)$ satisfies
\[ [f]_{\textup{U}^{d,p}(\R^n)} \leq \Bigl( \frac{2^{2d} p^p}{(d+p)^{d+p}} \Bigr)^{n/(2^{d+1}p)} \|f\|_{\textup{L}^{2^d p/(d+p)}(\R^n)} . \]
The above constant is optimal. Among nonnegative functions $f\in\textup{L}^{2^d p/(d+p)}(\R^n)$ that are not equal to $0$ a.e., equality is attained precisely when $f$ agrees with a Gaussian up to a null set, i.e.,
\[ f(x)=c\exp\bigl(-(x-x_0)\cdot M(x-x_0)\bigr) \]
for almost every $x\in\R^n$, where $c>0$, $x_0\in\R^n$, and $M$ is a real symmetric positive-definite $n\times n$ matrix.
\item Moreover, if $\varepsilon$ is sufficiently small in terms of $d,p,n$, and $f\in\textup{L}^{2^d p/(d+p)}(\R^n)$ is nonnegative, not identically zero modulo null sets, and such that
\[ [f]_{\textup{U}^{d,p}(\R^n)} \geq (1-\varepsilon) \Bigl( \frac{2^{2d} p^p}{(d+p)^{d+p}} \Bigr)^{n/(2^{d+1}p)} \|f\|_{\textup{L}^{2^d p/(d+p)}(\R^n)} , \]
then there exists a Gaussian $g$ such that
\[ \|g\|_{\textup{L}^{2^d p/(d+p)}(\R^n)}=\|f\|_{\textup{L}^{2^d p/(d+p)}(\R^n)} \]
and
\[ \|f - g\|_{\textup{L}^{2^d p/(d+p)}(\R^n)} = o^{\varepsilon\to0}_{d,p,n}(1)\,\|f\|_{\textup{L}^{2^d p/(d+p)}(\R^n)}. \]
\end{enumerate}
\end{theorem}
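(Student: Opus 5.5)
The plan is to reduce Theorem \ref{thm:sharpRn} to the sharp Young convolution inequality on $\R^n$ (Beckner, Brascamp--Lieb) and to Christ's characterisation of its near-extremisers, proceeding by induction on $d$ as in the Eisner--Tao proof of \eqref{eq:UonRn}. The starting point is the recursion
\[ R_d f(h_1,\ldots,h_d) = \int_{\R^n} R_{d-1}\bigl(\Delta_{h_d} f\bigr)(h_1,\ldots,h_{d-1})\dd x\text{-form}, \]
or more usefully, for fixed $h'=(h_1,\ldots,h_{d-1})$, the identity $R_d f(h',h_d) = (F_{h'} * \widetilde{F_{h'}})(h_d)$ with $F_{h'}(x)=\prod_{\omega\in\{0,1\}^{d-1}} f(x+\omega\cdot h')$ and $\widetilde{F}(x)=F(-x)$. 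Then
\[ \|R_d f(h',\cdot)\|_{\textup{L}^p(\R^n)} \leq A_{r,r,p}^n \|F_{h'}\|_{\textup{L}^r(\R^n)}^2, \qquad \frac{2}{r}=1+\frac1p, \]
by sharp Young ($p\geq1$ is needed here, which explains the hypothesis), with the Beckner constant $A$.

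Next we rewrite $\|F_{h'}\|_{\textup{L}^r}^{r}$ as $R_{d-1}(f^r)(h')$. Integrating the $p$-th power over $h'$ gives
\[ [f]_{\textup{U}^{d,p}}^{2^d p} \leq A^{np}\int \bigl(R_{d-1}(f^r)(h')\bigr)^{2p/r}\dd h' = A^{np}\,[f^r]_{\textup{U}^{d-1,2p/r}}^{2^{d-1}\cdot 2p/r}. \]
Since $2p/r = p+1 \geq 1$, we can apply the induction hypothesis (Theorem \ref{thm:sharpRn}(a) at degree $d-1$, exponent $p+1$). For the base case $d=1$ we use $R_1 f = f*\widetilde f$ and sharp Young directly. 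The exponents should telescope to $2^d p/(d+p)$: one checks that $r\cdot 2^{d-1}(p+1)/(d+p) = 2^d p/(d+p)$. Collecting the Beckner constants at each stage yields the stated constant. As a cross-check, Gaussians give equality at every step (they are Young extremisers, and $F_{h'}$ is again a Gaussian with the same quadratic form), which confirms optimality. For the equality cases we use the fact that equality forces equality in sharp Young for a.e.\ $h'$. Hence each $F_{h'}$ is a Gaussian, which forces $f$ itself to be a Gaussian; for example, take $h'$ near $0$ in the form used by Eisner--Tao, or use the induction on $f^r$.

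For part (b), near-equality propagates in the same way. Fix $\varepsilon$. Then either the Young step loses a factor $1-\delta$ on a set of $h'$ carrying a non-negligible portion of the mass, or the inductive step is nearly sharp. In the second case the induction hypothesis (b) for $f^r$ at degree $d-1$ shows that $f^r$ is close to a Gaussian in $\textup{L}^{2^{d-1}(p+1)/(d-1+p+1)}$, and therefore $f$ is close to a Gaussian in $\textup{L}^{2^dp/(d+p)}$ by the elementary continuity of $g\mapsto g^{1/r}$ between the corresponding Lebesgue spaces. The base case $d=1$ and the Young steps use Christ's theorem on near-extremisers of Young's inequality.

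The main obstacle is the case analysis with the weights $\|F_{h'}\|_r^{2p}$: we must ensure that the induction on $f^r$ alone suffices. This needs to be verified, because the inductive inequality is only an upper bound, so near-equality overall forces near-equality in \emph{both} steps. That observation is what I would use, together with a quantitative $o(1)$ bookkeeping.
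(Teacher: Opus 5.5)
Your proposal is correct and follows essentially the paper's proof: Beckner's sharp Young inequality gives the recursion $[f]_{\textup{U}^{d,p}(\R^n)}\leq C\,[f^{2p/(p+1)}]_{\textup{U}^{d-1,p+1}(\R^n)}^{(p+1)/(2p)}$, your induction on $d$ is the paper's chain over $j$, and near-extremality is forced at every step so that Christ's theorem is used only at the bottom Young step and then transferred back through the power map. One point needs tightening in the equality case: drop the route via Gaussianity of each $F_{h'}$, which fails for $p=1$ (then $r=1$ and the top step $\textup{L}^1\ast\textup{L}^1\to\textup{L}^1$ is an identity for nonnegative functions), and use only the induction on $f^r$, with the base case $d=1$ stated for exponents $>1$; this is exactly the paper's use of the final step $j=d-1$, which is nondegenerate because $d\geq2$.
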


Finally, we give two families of degree lowering inequalities for the functionals $[\,\cdot\,]_{\textup{U}^{d,p}(G)}$, which cannot be formulated within the class of the usual Gowers norms.
In the following theorem, $d\geq2$ is an integer, while positive numbers $p,p_1,\ldots,p_d$ and real numbers $\theta_1,\ldots,\theta_d$ are related by the equalities
\[ \frac{1}{d-1}\Bigl(1-\frac{1}{p}\Bigr) = \theta_i\Bigl(1-\frac{2}{p_i}\Bigr) \]
for $i=1,\ldots,d$. The group $G$ is again a second-countable locally compact abelian group.

\begin{theorem}\label{thm:diffUdp}
\begin{enumerate}[(a)]
\item If $0<p\leq1$, $\theta_i>0$ for every $i$, and
$\sum_{i=1}^{d}\theta_i=1$, then every measurable
$f\colon G\to[0,\infty)$ satisfies
\[ [f]_{\textup{U}^{d,p}(G)} \leq \prod_{i=1}^d [f]_{\textup{U}^{d-1,p_i}(G)}^{\theta_i}. \]
If $1<p<\infty$, $\theta_1>0$, $\theta_i<0$ for $i\geq2$, and $\sum_{i=1}^{d}\theta_i=1$, then every measurable $f\colon G\to[0,\infty)$ satisfies the reverse inequality
\[ [f]_{\textup{U}^{d,p}(G)} \geq \prod_{i=1}^d [f]_{\textup{U}^{d-1,p_i}(G)}^{\theta_i}. \]
In order to make the latter inequality meaningful, we also assume that $f$ is not equal to zero almost everywhere and that all involved Gowers-type functionals are finite.
\item Suppose that the parameters $p\neq1$, $p_i$ and $\theta_i$ obey one of the two sets of assumptions in part~(a), and let $f$ be a nonnegative measurable function with all the relevant Gowers-type functionals finite and nonzero. If, for a sufficiently small $\varepsilon>0$, the corresponding inequality is nearly extremised in the sense that
\[ [f]_{\textup{U}^{d,p}(G)} \geq (1-\varepsilon) \prod_{i=1}^d [f]_{\textup{U}^{d-1,p_i}(G)}^{\theta_i} \]
or
\[ \prod_{i=1}^d [f]_{\textup{U}^{d-1,p_i}(G)}^{\theta_i} \geq (1-\varepsilon) [f]_{\textup{U}^{d,p}(G)}, \]
respectively, then there exist nonnegative measurable functions $r_1,\ldots,r_d$ on $G$ of integral $1$ such that, with $\mathbf{p}=(p_1,\ldots,p_d)$ and $\boldsymbol{\theta}=(\theta_1,\ldots,\theta_d)$, one has
\[ \Bigl\| (R_d f)^p - \|R_d f\|_{\textup{L}^p(G^d)}^p (r_1\otimes\cdots\otimes r_d) \Bigr\|_{\textup{L}^1(G^d)}
= o^{\varepsilon\to0}_{d,p,\mathbf{p},\boldsymbol{\theta}}(1) \|R_d f\|_{\textup{L}^p(G^d)}^p \]
and
\[ \Bigl\| (R_{d-1} f)^{p_i} -\|R_{d-1} f\|_{\textup{L}^{p_i}(G^{d-1})}^{p_i} \mathop{\bigotimes}_{j\ne i}r_j \Bigr\|_{\textup{L}^1(G^{d-1})}
= o^{\varepsilon\to0}_{d,p,\mathbf{p},\boldsymbol{\theta}}(1) \|R_{d-1} f\|_{\textup{L}^{p_i}(G^{d-1})}^{p_i} \]
for $i=1,\ldots,d$. 
\end{enumerate}
\end{theorem}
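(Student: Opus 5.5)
The plan is to reduce both parts to a general inequality about a nonnegative function on $G^d$ and its $(d-1)$-dimensional marginals, and to prove that inequality with Shannon (differential) entropy. Set $g:=R_d f\geq0$ on $G^d$ and $m_i:=(R_{d-1}f)^2$, viewed as a function of the variables other than $h_i$. By the marginal identity \eqref{eq:Rmarg}, $m_i=\int_G g\dd\mu(h_i)$. Writing $q_i:=p_i/2$, the claimed inequality in (a) becomes
\[ \|g\|_{\textup{L}^p(G^d)}\leq\prod_{i=1}^d\|m_i\|_{\textup{L}^{q_i}(G^{d-1})}^{\theta_i}, \qquad c:=\frac{1}{d-1}\Bigl(1-\frac1p\Bigr)=\theta_i\Bigl(1-\frac1{q_i}\Bigr). \]
I will prove this for every nonnegative $g$ with marginals $m_i$; the specific structure of $R_d f$ is not needed.

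\emph{Entropy argument for the case $0<p\leq1$.} Take the probability density $\rho:=g^p/\int g^p$ on $G^d$ with respect to $\mu^{\otimes d}$. Let $\rho^{(i)}$ denote its marginal obtained by integrating out $h_i$, and let $H(\cdot)$ be differential entropy relative to Haar measure. The proof combines four facts.
\begin{enumerate}[(i)]
\item The identity $\log\int g^p=p\,\mathbb{E}_\rho\log g+H(\rho)$ holds exactly.
\item The Gibbs variational inequality gives $\log\int m_i^{q_i}\geq q_i\,\mathbb{E}_{\rho^{(i)}}\log m_i+H(\rho^{(i)})$.
\item Jensen's inequality applied to the conditional density of $h_i$ given the other coordinates gives
\[ \mathbb{E}_{\rho^{(i)}}\log m_i\geq\mathbb{E}_\rho\log g+H(\rho)-H(\rho^{(i)}). \]
\item Han's inequality states $\sum_i H(\rho^{(i)})\geq(d-1)H(\rho)$.
\end{enumerate}
Multiply (ii) combined with (iii) by $\theta_i/q_i>0$ and sum; using $\sum\theta_i=1$, this yields
\[ \sum_i\frac{\theta_i}{q_i}\log\!\int m_i^{q_i}\geq\mathbb{E}_\rho\log g+H(\rho)-c\sum_iH(\rho^{(i)}). \]
Since $c\leq0$, Han's inequality bounds the right-hand side below by $\mathbb{E}_\rho\log g+H(\rho)/p$. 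By (i), this equals $\frac1p\log\int g^p$, which is the claim.

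\emph{The reverse inequality for $p>1$.} I would rearrange it into a forward inequality for the positive-weight marginal, namely
\[ \|m_1\|_{q_1}\leq\|g\|_p^{1/\theta_1}\prod_{i\geq2}\|m_i\|_{q_i}^{|\theta_i|/\theta_1}. \]
I would then run the same entropy bookkeeping with the roles exchanged: the density would now be built from $m_1^{q_1}$, lifted to $G^d$ through the conditional law induced by $g$, and the Gibbs and Jensen steps would be applied to $g$ and to the $m_i$ with $i\geq2$. The sign of $c$ is now positive, and the signs of the $\theta_i$ are exactly what is needed for Han's inequality to enter in the correct direction. I expect this step to be the main obstacle: choosing the right auxiliary density so that every inequality points the right way. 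Finiteness and nonvanishing of the functionals, assumed in the statement, are needed to make the entropies finite. A truncation and monotone-convergence argument would handle integrability in general.

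\emph{Near-extremisers, part (b).} Near-equality forces near-equality in each step. Because $p\neq1$ we have $c\neq0$, so the deficit in Han's inequality is $o(1)$ times $H(\rho)$, suitably normalised. That deficit is a combination of mutual informations, so by Pinsker's inequality $\rho$ is $\textup{L}^1$-close to a product $r_1\otimes\cdots\otimes r_d$ of its one-dimensional marginals. This gives the first conclusion, since $\rho=(R_df)^p/\|R_df\|_p^p$. Near-equality in the Gibbs steps (ii) forces $\rho^{(i)}$ to be close in relative entropy, hence in $\textup{L}^1$, to $m_i^{q_i}/\int m_i^{q_i}=(R_{d-1}f)^{p_i}/\|R_{d-1}f\|_{p_i}^{p_i}$. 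On the other hand, $\rho^{(i)}$ is close to $\bigotimes_{j\neq i}r_j$. Combining these gives the second conclusion. A further technical point is to make the $o(1)$ quantitative without relying on the absolute size of the entropies; I would handle it by normalising with scaling or by working directly with the relative-entropy deficits.
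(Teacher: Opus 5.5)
Your route is correct, and it differs genuinely from the paper's. The paper gets (a) by citing Bennett--Tao (Lemma \ref{lem:wadjLW}). Their proof applies H\"older's inequality to the pointwise factorisation \eqref{eq:wstptfact} and then the Loomis--Whitney inequality. The paper then proves (b) by re-running that argument with the H\"older stability Lemma \ref{lem:stHold} and an inductive stability result for Loomis--Whitney (Lemma \ref{lem:LWfact}). You dualise instead: the Gibbs variational principle and fibrewise Jensen replace H\"older, and Han's inequality, the entropic dual of Loomis--Whitney, replaces Loomis--Whitney. Your chain for $0<p\leq1$ checks out line by line.

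What your approach buys is an exact decomposition of $-2^d\log\mathfrak{R}(f)$. It splits as a positive combination of relative entropies (the Gibbs and Jensen deficits) plus $|c|$ times the Han deficit. So every piece is $O(\varepsilon)$ in absolute terms, and your worry about ``$o(1)$ times $H(\rho)$'' and rescaling is unnecessary. Pinsker then gives an explicit $O(\sqrt{\varepsilon})$ rate with explicit $r_j$, namely one-dimensional marginals, and Lemma \ref{lem:LWfact} is bypassed entirely. The price is a truncation to make all entropies finite, which is harmless also in (b) by monotone convergence. The paper's H\"older/Loomis--Whitney framework avoids entropy bookkeeping and produces Lemmas \ref{lem:LWfact} and \ref{lem:cylLWst}, which it reuses in Section \ref{sec:log-convexity}.

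Two steps you left open do go through, but they need to be written out.

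\emph{The case $p>1$.} The density you describe is $\sigma:=m_1^{q_1-1}g/\int m_1^{q_1}$, which is exactly the paper's $u_1$.
\begin{itemize}
\item Its marginal after integrating out $h_1$ is $m_1^{q_1}/\int m_1^{q_1}$, and its conditional law in $h_1$ is $g/m_1$. Hence (ii) and (iii) hold with equality for $i=1$.
\item For $i\geq2$, (ii) and (iii) are multiplied by $\theta_i/q_i<0$ and $\theta_i<0$, so they reverse.
\item Summing gives $\sum_i\frac{\theta_i}{q_i}\log\int m_i^{q_i}\leq\mathbb{E}_\sigma\log g+H(\sigma)-c\sum_iH(\sigma^{(i)})$.
\item Since now $c>0$, Han's inequality and then the Gibbs inequality $\mathbb{E}_\sigma\log g+H(\sigma)/p\leq\frac1p\log\int g^p$ finish the proof.
\end{itemize}
Consequently, in (b) for $p>1$ the product structure is first obtained for $\sigma$. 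The last Gibbs deficit $\frac1p D(\sigma\,\|\,g^p/\!\int g^p)=O(\varepsilon)$ then transfers it to $(R_df)^p$.

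\emph{From the Han deficit to a product.} Let $X=(X_1,\ldots,X_d)$ have density $\rho$. The Han deficit is the dual total correlation $\mathrm{DTC}=H(\rho)-\sum_iH(X_i\mid X_{-i})$. ``A combination of mutual informations plus Pinsker'' does not by itself give closeness to $\bigotimes_j\rho_j$. You need to compare with the total correlation $C=D(\rho\,\|\,\bigotimes_j\rho_j)$:
\begin{itemize}
\item Writing the chain rule with $X_i$ first shows $\mathrm{DTC}\geq I(X_i;X_{-i})$ for every $i$.
\item Also $C+\mathrm{DTC}=\sum_iI(X_i;X_{-i})$.
\item Hence $C\leq(d-1)\,\mathrm{DTC}$, and only then does Pinsker apply.
\end{itemize}
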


In order to prove Theorem \ref{thm:mainlog} we assume for a contradiction that the unitary version \eqref{eq:BTlogcv} of the Gowers log-convexity estimate is optimal. Then, using Theorem \ref{thm:diffUdp}, we prove \eqref{eq:BTlogcv} in two complementary ways. A near-extremiser $f$ of \eqref{eq:BTlogcv} would also have to be a near-extremiser for all of the inequalities involved in the two different proofs. This rigidity forces $R_d f$ to approximate the characteristic function of a set. Noting the autocorrelation structure of $R_d f$, with some additional work this contradicts Fournier's improved Young convolution inequality for unimodular groups with no compact open subgroups.

\subsection{Generalised Gowers functionals on discrete cubes}

For positive integer exponents $p$, the functionals $[\,\cdot\,]_{\textup{U}^{d,p}}$ also yield sharp dimension-free estimates on discrete cubes. Namely, equip $\Z^m$ with the counting measure and fix integers $d\geq2$, $p\in\N$, and $n\geq2$. Let $q_{d,p,n}$ denote the supremum of numbers $q>0$ such that
\begin{equation}\label{eq:ell_critical}
[f]_{\textup{U}^{d,p}(\Z^m)} \leq\|f\|_{\ell^q(\Z^m)}
\end{equation}
for every dimension $m$ and every nonnegative $f$ supported in $\{0,1,\ldots,n-1\}^m$. It will turn out that this supremum is a maximum.
To relate this to a combinatorial problem, for $A\subseteq\{0,1,\ldots,n-1\}^m$ define a certain generalised additive energy of $A$ as
\[ \mathcal{P}_{d,p}(A)
:= [\1_A]_{\textup{U}^{d,p}(\Z^m)}^{2^d p}
= \sum_{h_1,\ldots,h_d\in\Z^m} \bigl(R_d\1_A(h_1,\ldots,h_d)\bigr)^p. \]
Recalling \eqref{eq:rewriteUdp}, we see that $\mathcal{P}_{d,p}(A)$ counts the tuples $(x_1,\ldots,x_p,h_1,\ldots,h_d)$ for which
\[ x_j+\omega_1h_1+\cdots+\omega_dh_d\in A
\quad\text{for every }1\leq j\leq p \text{ and }(\omega_1,\ldots,\omega_d)\in\{0,1\}^d. \]
The particular case $p=1$ of these energies was introduced by Shkredov \cite{Shkredov2014} and studied further on discrete cubes by Beker, Crmari\'{c}, and one of the present authors \cite{BCK25}.
Let $t_{d,p,n}$ be the infimum of numbers $t>0$ such that
\begin{equation}\label{eq:dcset}
\mathcal{P}_{d,p}(A)\leq|A|^t
\end{equation}
for every $m$ and every such $A$. Again, we will show below that this infimum is, in fact, a minimum.
Estimates of the form \eqref{eq:dcset} were also studied in \cite{BCK25}, while their first instance appeared in the work of Kane and Tao \cite{KT17}.
The product principle in Proposition \ref{prop:dcprod} below shows that the two extremal exponents are attained and satisfy 
\begin{equation}\label{eq:dcrel}
q_{d,p,n}t_{d,p,n} = p 2^d,
\end{equation}
so the problems of estimating $q_{d,p,n}$ and estimating $t_{d,p,n}$ are closely related.
We obtain the following extensions of the three principal results of the paper \cite{BCK25}. Those results can be viewed as special cases for $p=1$.

\begin{theorem}\label{thm:dcres}
For $d\geq2$, $p\in\N$, and $n\geq2$, the following hold.
\begin{enumerate}[(a)]
\item On binary cubes,
\[ t_{d,p,2}=\log_2(2^p+2d), \quad q_{d,p,2}=\frac{2^d p}{\log_2(2^p+2d)}. \]
\item For fixed $d$ and $p$,
\begin{equation}\label{eq:dclargen}
\begin{aligned}
d+p -\bigl(1+o_{d,p}^{n\to\infty}(1)\bigr) \frac{(d+p)\log_2(d+p)-p\log_2p-2d}{2\log_2n} & \\
\leq t_{d,p,n} \leq d+p-\frac{c}{\log_2n} &, 
\end{aligned}
\end{equation}
as $n\to\infty$, where $c>0$ is an absolute constant.
\item For fixed $n$ and $p$,
\[ t_{d,p,n} = \frac{(n-1)\log_2(2d)-\log_2((n-1)!)}{\operatorname{H}(\operatorname{B}(n-1,1/2))} + o_{n,p}^{d\to\infty}(1), \]
as $d\to\infty$, where
\[ \operatorname{H}\biggl(\operatorname{B}\Bigl(n-1,\frac{1}{2}\Bigr)\biggr)
= -\sum_{j=0}^{n-1}\frac{\binom{n-1}{j}}{2^{n-1}} \log_2\frac{\binom{n-1}{j}}{2^{n-1}} \]
is the Shannon entropy of the symmetric binomial random variable with $n-1$ trials.
\end{enumerate}
\end{theorem}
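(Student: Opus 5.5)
The plan follows the strategy of \cite{BCK25} for $p=1$, with all three parts driven by the product principle (Proposition \ref{prop:dcprod}) and relation \eqref{eq:dcrel}. This principle says that $t_{d,p,n}$ equals the limit, over dimensions $m$, of $\max_A \log\mathcal{P}_{d,p}(A)/\log|A|$. It also says that any one-dimensional (or low-dimensional) set $A$ with $\mathcal{P}_{d,p}(A)=|A|^t$ gives the lower bound $t_{d,p,n}\geq t$. For upper bounds, it reduces the problem to proving a single-coordinate inequality that tensorises.

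\textbf{Tensorisation inequality.} Writing $x_j=(x_j',y_j)$ and $h_i=(h_i',k_i)$ with $y_j,k_i\in\{0,\ldots,n-1\}$ in the last coordinate, we would establish a fibrewise inequality
\[ \sum_{\text{last coords}}\prod_{j,\omega}F_{y_j+\omega\cdot k}\leq \Bigl(\sum_{y}F_y^{q}\Bigr)^{2^dp/q}. \]
Here each $F_y$ is the $[\,\cdot\,]$-type contribution of the fibre $f(\cdot,y)$, handled by induction on $m$ through the Gowers--Cauchy--Schwarz/H\"older structure of the integer-$p$ form \eqref{eq:rewriteUdp}.

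\textbf{Part (a).} For $n=2$, the admissible configurations in the last coordinate are of two kinds. Either all $k_i=0$, in which case the $y_j\in\{0,1\}$ are arbitrary, giving $2^p$ choices. Or exactly one $k_i=\pm1$, with all $y_j$ forced, giving $2d$ choices. Taking $A=\{0,1\}^m$ gives $\mathcal{P}_{d,p}(A)=(2^p+2d)^m$, which yields the lower bound $t\geq\log_2(2^p+2d)$.

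For the matching upper bound, we prove the two-point inequality
\[ a^{2^dp}\cdot(\text{terms with }k=0)+2d\,(a^{2^{d-1}}b^{2^{d-1}})^p\leq (a^q+b^q)^{2^dp/q} \quad\text{with } q=2^dp/\log_2(2^p+2d). \]
The strategy is to normalise $a^q+b^q=1$ and reduce to a one-variable calculus inequality whose extremum is at $a=b$. I expect this step to be the main obstacle: a convexity argument must handle general $p$, and the Hölder exponents coming from the $p$ copies $x_j$ interact with the $2^d$ cube vertices.

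\textbf{Part (b).} For the lower bound, we use a product of sets that are Hamming-type slices: $A$ is the set of points of $\{0,\ldots,n-1\}^m$ with prescribed proportions of coordinate values. We then count the solutions by entropy, optimise the proportions, and expand asymptotically in $n$ to get the displayed second-order term. The upper bound $d+p-c/\log_2 n$ follows by embedding $\{0,\ldots,n-1\}$ into a binary cube of dimension $\lceil\log_2 n\rceil$ via digit expansion, with carries controlled. We would then apply a slightly lossy version of (a), giving the saving $c/\log_2 n$ relative to the trivial bound $|A|^{d+p}$.

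\textbf{Part (c).} As $d\to\infty$, we follow \cite{BCK25}: the dominant configurations have $O(1)$ nonzero $k_i$, each equal to $\pm1$ and combined in binomial fashion. This gives $\mathcal{P}\approx (2d)^{(n-1)m}/((n-1)!)^m$ against $|A|=2^{H m}$ for $A$ a type class with binomial profile $\operatorname{B}(n-1,1/2)$. The $p$-dependence enters only through lower-order factors, which vanish as $d\to\infty$. The matching upper bound comes from an entropy inequality (Shearer/Han type) applied to the uniform distribution on the counted tuples.
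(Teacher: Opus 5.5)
Your lower-bound constructions in (a) (the full binary cube) and (c) (a binomial type class) are fine. The latter is the set-theoretic counterpart of the paper's test function $f(j)=\bigl(\binom{n-1}{j}2^{1-n}\bigr)^{t_{d,p,n}/(2^dp)}$. Every other step, however, is missing the idea that makes it work.

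In (a) you reduce, via Proposition~\ref{prop:dcprod}, to the two-point inequality $(a^{2^d}+b^{2^d})^p+2d\,(ab)^{2^{d-1}p}\leq(a^q+b^q)^t$ with $t=\log_2(2^p+2d)$ (your $k=0$ term is misstated), and then leave it open as ``the main obstacle''. It is tight both at $ab=0$ and at $a=b$, which is what makes it delicate. The paper writes $x=a^q$, $y=b^q$, normalises $x+y=1$, and substitutes $x=e^v/(2\cosh v)$, $y=e^{-v}/(2\cosh v)$. Using $2d=2^t-2^p$, the inequality becomes $2^p\bigl((\cosh(u/p))^p-1\bigr)\leq2^t\bigl((\cosh(u/t))^t-1\bigr)$ with $u=tv$. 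This follows because $s\mapsto2^s\bigl((\cosh(u/s))^s-1\bigr)$ is increasing on $[2,\infty)$ (shown in the proof of \cite[Lem.\,12]{CKS25}) and $t>p\geq2$; the case $p=1$ is \cite{BCK25}.

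For the upper bound in (b), the binary-digit embedding ``with carries controlled'' has no mechanism behind it. Since $t_{d,p,2}$ is bounded away from $d+p$ while $t_{d,p,n}\to d+p$, the carries destroy almost all of the binary gain, so this is not a ``slightly lossy'' version of (a). Recovering a residual saving of order $1/\log_2 n$ is exactly the nontrivial content of \cite{Shao} and \cite[Thm.\,3]{BCK25}. You can simply invoke that result: $0\leq R_d\1_A\leq|A|$ gives $\mathcal{P}_{d,p}(A)\leq|A|^{p-1}\mathcal{P}_{d,1}(A)$. Hence $t_{d,p,n}\leq p-1+t_{d,1,n}$, and $t_{d,1,n}\leq d+1-c/\log_2n$ is \cite[Thm.\,3]{BCK25}.

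For the lower bound in (b), ``optimise the proportions and expand in $n$'' gives no access to the constant $(d+p)\log_2(d+p)-p\log_2p-2d$. That constant is exactly $-2^{d+1}p\log_2$ of the sharp constant of Theorem~\ref{thm:sharpRn}(a) on $\R$, so it comes from Beckner's sharp Young inequality and Gaussian extremisers, neither of which you mention. The paper tests \eqref{eq:ell_critical} on truncated discrete Gaussians $\varphi_{M,n}$ at scale $n$, passes to the continuum by Riemann sums, and lets $M\to\infty$. A type-class version would need a discretised Gaussian profile and the same limit computation.

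For the upper bound in (c), a Shearer/Han-type inequality on its own uses neither the support restriction nor the additive structure, so it cannot produce $\operatorname{H}(\operatorname{B}(n-1,1/2))$ in the denominator. What is needed is the one-dimensional inequality from Proposition~\ref{prop:dcprod}, with $g=f^{2^dp/t}$ normalised to have sum $1$:
\begin{enumerate}[(i)]
\item Group the increments by the number $l\leq n-1$ of nonzero $h_i$.
\item For $1\leq l\leq n-2$, apply Jensen to the $p$th power, losing only $n^{p-1}$. Then use the weighted AM--GM bound $\prod_jg(j)^{\alpha_j}\leq2^{-\operatorname{H}(h_1X_1+\cdots+h_lX_l)}$ together with the entropy inequality \eqref{eq:dcent}, which is strict unless $l=n-1$ and all $|h_i|=1$.
\item For $l=n-1$ the base point is unique, so $p$ plays no role.
\item The $l=0$ term $\bigl(\sum_jg(j)^{t/p}\bigr)^p$ is not negligible when $g$ concentrates near one point. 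This case needs the separate $\vartheta$-argument showing that all $l\geq1$ terms are dominated by $2\sum_{i<j}g(i)g(j)$.
\end{enumerate}
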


Surprisingly, the parameter $p$ plays no role in the main asymptotic term in part (c).
The main reason why Theorem \ref{thm:dcres} is stated only for positive integers $p$ is that its proof will use the triangle inequality for $[|\cdot|]_{\textup{U}^{d,p}}$ and also a slightly stronger Gowers--Cauchy--Schwarz inequality from Lemma \ref{lm:UdpGCS}. However, one might expect the same asymptotic properties of the analogous quantities $t_{d,p,n}$ and $q_{d,p,n}$ to be retained for general $p>0$.


\subsection{Outline of the paper}

In Section \ref{sec:deglow} we deduce Theorem \ref{thm:diffUdp}(a) from the adjoint Loomis--Whitney inequality of Bennett and Tao. Section \ref{sec:Lebest} establishes the Lebesgue estimates on LCA groups and describes their near-extremisers, while Section \ref{sec:Euclid} proves the sharp Euclidean inequality and the fact that its near-extremisers are necessarily close to Gaussians. Section \ref{sec:lowering_extremisers} investigates the near-extremisers in Theorem \ref{thm:diffUdp}(b) and Section \ref{sec:log-convexity} builds on those considerations to prove Theorem \ref{thm:mainlog}. Section \ref{sec:realobs} explains why we do not expect that Theorem \ref{thm:mainlog} can be proved by merely observing that the one-dimensional autocorrelation of $f$ needs to be close to an indicator of a measurable set. Section \ref{sec:dcube} studies the sharp estimates for $[\,\cdot\,]_{\textup{U}^{d,p}(G)}$ on discrete cubes. Finally, Appendix \ref{sec:Udp_are_norms} sketches the proof that the triangle inequality holds for $[|\cdot|]_{\textup{U}^{d,p}(G)}$ when $p$ is a positive integer, while Appendix \ref{sec:Udp_not_norms} shows that it can fail otherwise.


\subsection{Notation}

Regard the variable $p$ as a parameter taking values in the set $P$.
For a function $\varphi\colon(0,\infty)\to(0,\infty)$, we write 
\[ o_p^{\varepsilon\to0}(\varphi(\varepsilon)) \]
in place of $\psi(\varepsilon,p)$ for an arbitrary function $\psi\colon(0,\infty)\times P \to\mathbb{C}$ that satisfies 
\[ \lim_{\varepsilon\to0+} \frac{\psi(\varepsilon,p)}{\varphi(\varepsilon)}=0 \quad \text{for every } p\in P, \]
and
\[ O_p^{\varepsilon\to0}(\varphi(\varepsilon)) \]
in place of $\psi(\varepsilon,p)$ for an arbitrary function $\psi\colon(0,\infty)\times P \to\mathbb{C}$ that satisfies 
\[ \limsup_{\varepsilon\to0+} \frac{|\psi(\varepsilon,p)|}{\varphi(\varepsilon)}<\infty\quad \text{for every } p\in P. \]
Completely analogously, one defines the notation 
\[ o_p^{n\to\infty}(\varphi(n))\quad \text{and} \quad O_p^{n\to\infty}(\varphi(n)). \]
If $X$ is an arbitrary set, then for two functions $\varphi,\psi \colon X\times P\to [0,\infty)$ we write
\[ \psi(x,p) \lesssim_p \varphi(x,p) \quad\text{and}\quad \varphi(x,p) \gtrsim_p \psi(x,p) \]
if, for every $p\in P$, there exists a constant $C_p\in[0,\infty)$ such that
\[ \psi(x,p) \leq C_p \,\varphi(x,p) \quad \text{for every } x\in X. \]

We write $f_+$ and $f_-$ for the positive and negative parts of a real-valued function $f$, defined respectively as
\[ f_+(x) := \max\{f(x),0\} \quad\text{and}\quad f_-(x) := \max\{-f(x),0\} \]
for every $x$ in its domain.
If $f$ is a complex function on an LCA group $G$, then $\widetilde{f}$ will denote the conjugated and reflected function defined as
\[ \widetilde{f}(x):=\overline{f(-x)} \quad\text{for } x\in G. \] 
We normalise the Fourier transform of $f\in\textup{L}^1(\R^n)$ by
\[ \widehat{f}(\xi) := \int_{\R^n} f(x)e^{-2\pi i x\cdot\xi}\dd x \quad\text{for } \xi\in\R^n. \]
One has
\[ \widehat{\widetilde{f}}(\xi) = \overline{\widehat{f}(\xi)}. \]

The Lebesgue measure of a measurable set $E\subseteq\R^n$ will be written as $|E|$. The cardinality of a finite set $A$ will be written $|A|$ as well; this will cause no confusion.
The torus $\T=\R/\Z$ is equipped with its normalised Haar measure, which coincides with the Lebesgue measure on $[0,1)$ via the standard choice of the fundamental domain.
We write $\nu^d$ for the product measure
\[ \underbrace{\nu\times\cdots\times\nu}_{d\text{ factors}}. \]

The ``sinc'' function is defined on the real numbers as
\[ \operatorname{sinc}(t)
:= \begin{cases} 
\displaystyle\frac{\sin(\pi t)}{\pi t} & \text{for } t\neq0, \\
1 & \text{for } t=0. 
\end{cases} \]
We write $\N$ for the set of strictly positive integers, $\{1,2,3,\ldots\}$.


\section{Degree-lowering estimates}
\label{sec:deglow}

It is natural to jump ahead and first establish part (a) of Theorem \ref{thm:diffUdp}, since its degree-lowering estimates are general and we will use them in the proofs of other results. They are essentially just special cases of the inequalities by Bennett and Tao \cite{BennettTao}, rewritten in the notation of the $[\,\cdot\,]_{\textup{U}^{d,p}}$ functionals.

Let $X=X_1\times\cdots\times X_d$ be a product of $\sigma$-finite measure spaces $(X_i,\nu_i)$, $i=1,\ldots,d$. 
Put
\[ X^{(i)} := X_1\times\cdots\times X_{i-1}\times X_{i+1}\times\cdots\times X_d \]
for $i=1,\ldots,d$.
For a measurable function $F\colon X\to[0,\infty]$, write
\begin{align*}
& F_i \colon X^{(i)} \to [0,\infty], \\
& F_i(x_1,\ldots,x_{i-1},x_{i+1},\ldots,x_d) :=\int_{X_i}F(x_1,\ldots,x_d)\dd \nu_i(x_i) 
\end{align*}
for its $i$th marginal.

The following lemma is due to Bennett and Tao \cite{BennettTao}.

\begin{lemma}[from {\cite{BennettTao}}]
\label{lem:wadjLW}
Let $d\geq2$, let $a>0$, and suppose that nonzero real numbers $\theta_i$ and positive numbers $b_i$ satisfy
\[ \frac{1}{d-1}\Bigl(1-\frac{1}{a}\Bigr) = \theta_i\Bigl(1-\frac{1}{b_i}\Bigr) \quad\text{for } 1\leq i\leq d. \]
Take a nonnegative function $F$ on $X$, which is measurable with respect to the completion of the product $\sigma$-algebra.
If $0<a\leq1$, all $\theta_i$ are positive, and $\sum_i\theta_i=1$, then
\[ \|F\|_{\textup{L}^a(X)} \leq\prod_{i=1}^d \|F_i\|_{\textup{L}^{b_i}(X^{(i)})}^{\theta_i}. \]
If $1<a<\infty$, $\sum_i\theta_i=1$, and precisely one $\theta_i$ is positive, then
\[ \|F\|_{\textup{L}^a(X)} \geq\prod_{i=1}^d \|F_i\|_{\textup{L}^{b_i}(X^{(i)})}^{\theta_i}. \]
In the latter inequality we also assume that all appearing Lebesgue norms are positive and finite.
\end{lemma}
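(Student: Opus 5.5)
The plan is to derive both inequalities from three elementary tools. The first is Hölder's inequality, used in its reverse form when some exponents are negative. The second is Finner's (Loomis--Whitney type) inequality
\[ \int_X \prod_{i=1}^d g_i(x^{(i)})\dd\nu(x) \leq \prod_{i=1}^d \|g_i\|_{\textup{L}^{d-1}(X^{(i)})}, \]
valid for nonnegative $g_i$ on $X^{(i)}$. The third is the marginal identity
\[ \int_X F\,G_i \dd\nu = \int_{X^{(i)}} F_i\,G_i, \]
valid whenever $G_i$ depends only on the variables in $X^{(i)}$. By monotone convergence we may first assume that $F$ is bounded, supported in a set of finite measure and bounded below there, and remove these restrictions at the end. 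Fubini is applicable throughout because $F$ is measurable for the completed product $\sigma$-algebra.

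For the case $0<a\leq1$ with all $\theta_i>0$, fix exponents $\alpha_i\in\R$ to be chosen later, and write $F^a = \bigl(F\prod_i F_i^{-\alpha_i/a}\bigr)^a \prod_i F_i^{\alpha_i}$. Hölder's inequality with exponents $1/a$ and $1/(1-a)$ gives
\[ \int_X F^a \leq \Bigl(\int_X F\prod_i F_i^{-\alpha_i/a}\Bigr)^a \Bigl(\int_X \prod_i F_i^{\alpha_i/(1-a)}\Bigr)^{1-a}. \]
\begin{enumerate}[(i)]
\item In the first factor we write $\prod_i F_i^{-\alpha_i/a}=\prod_i \bigl(F_i^{-\alpha_i/(a\theta_i)}\bigr)^{\theta_i}$ and apply Hölder with weights $\theta_i$, which is allowed since $\sum_i\theta_i=1$. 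This bounds the factor by $\prod_i\bigl(\int_X F\,F_i^{-\alpha_i/(a\theta_i)}\bigr)^{\theta_i}$. By the marginal identity, the $i$th integral equals $\int_{X^{(i)}}F_i^{1-\alpha_i/(a\theta_i)}$.
\item The second factor is bounded by Finner's inequality by $\prod_i\bigl(\int_{X^{(i)}}F_i^{(d-1)\alpha_i/(1-a)}\bigr)^{(1-a)/(d-1)}$.
\end{enumerate}
We then choose $\alpha_i$ so that both integrals are $\int F_i^{b_i}$. This requires $1-\alpha_i/(a\theta_i)=b_i$ and $(d-1)\alpha_i/(1-a)=b_i$. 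Eliminating $\alpha_i$ yields exactly the hypothesis $\frac{1}{d-1}(1-\frac1a)=\theta_i(1-\frac1{b_i})$, up to the algebra I expect. The total power of $\int F_i^{b_i}$ is then $a\theta_i+(1-a)/(d-1)$, which should equal $a\theta_i/b_i$ by the same relation. Taking $a$th roots gives $\|F\|_a\leq\prod_i\|F_i\|_{b_i}^{\theta_i}$. The endpoint $a=1$ is trivial, since each $\|F_i\|_{\textup{L}^1}$ equals $\|F\|_{\textup{L}^1}$ and then $b_i=1$.

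For the case $1<a<\infty$, where exactly one $\theta_i$ (say $\theta_1$) is positive, I would run the same scheme in the other direction. The identity $\int F_1^{b_1}=\int_X F\,F_1^{b_1-1}$ is split by Hölder into $\|F\|_a$ times integrals of powers of $F$ and of the $F_i$. The negative weights $\theta_i$, $i\geq2$, are handled by moving the corresponding factors to the other side, i.e.\ by applying Hölder to the rearranged inequality $\|F_1\|_{b_1}^{\theta_1}\leq\|F\|_a\prod_{i\geq2}\|F_i\|_{b_i}^{-\theta_i}$. This rearranged form has only nonnegative weights $\theta_1,-\theta_2,\ldots,-\theta_d$. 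After dividing by $\theta_1$, it is an inequality of the same type as the first case, with the roles of $F$ and $F_1$ permuted, and it should follow from Hölder with weights $1/\theta_1,-\theta_i/\theta_1$ combined with Finner's inequality exactly as above. The positivity and finiteness assumptions make the division legitimate.

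The main obstacle I anticipate is the exponent bookkeeping in the reverse case. Specifically, one must check that all Hölder exponents produced are genuinely in $[1,\infty]$, or else invoke the correct reverse Hölder, and that the Finner step is applied to a configuration in which every coordinate is covered by exactly $d-1$ of the sets $X^{(i)}$. If the direct rearrangement fails there, I would fall back on Bennett and Tao's own route. That route dualises the adjoint inequality into a standard Brascamp--Lieb (Finner) inequality via the representation $\|F\|_a^a=\int F\cdot F^{a-1}$, treating $F^{a-1}$ as a test function.
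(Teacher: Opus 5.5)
Your proposal is correct and is essentially the Bennett--Tao argument that the paper recalls in the proof of Theorem~\ref{thm:diffUdp}(b): your factors $F F_i^{-\alpha_i/(a\theta_i)}=F F_i^{b_i-1}$ and $\prod_i F_i^{\alpha_i/(1-a)}=\prod_i F_i^{b_i/(d-1)}$ are exactly the paper's $T_i$ and Loomis--Whitney product, and you merely split its single H\"older step into two. The reverse case, which you flagged as the main obstacle, closes the same way using only forward H\"older inequalities: write $F F_1^{b_1-1}=F\bigl(\prod_i F_i^{\theta_i(b_i-1)}\bigr)^{1/\theta_1}\prod_{i\geq2}\bigl(F_i^{b_i-1}\bigr)^{-\theta_i/\theta_1}$, apply H\"older in $F\dd\nu$ with weights $1/\theta_1,-\theta_2/\theta_1,\ldots,-\theta_d/\theta_1$, then H\"older in $\textup{L}^a\times\textup{L}^{a/(a-1)}$ (where $\theta_i(b_i-1)\tfrac{a}{a-1}=\tfrac{b_i}{d-1}>0$) and Finner, and finally move the power $1-1/b_1$ of $\int F_1^{b_1}$ to the left to obtain $\|F_1\|_{b_1}^{\theta_1}\leq\|F\|_a\prod_{i\geq2}\|F_i\|_{b_i}^{-\theta_i}$, which mirrors the paper's rearranged identity for $u_1$ and needs neither reverse H\"older nor dualisation.
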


The first estimate is precisely the adjoint Loomis--Whitney inequality of Bennett and Tao \cite[Thm.~2.1(ii)]{BennettTao}.
The reverse estimate is a specialisation of their reverse adjoint Brascamp--Lieb inequality \cite[Thm.~3.1]{BennettTao}. That inequality was formulated on Euclidean spaces in \cite{BennettTao}, but the proof used only the Loomis--Whitney inequality and H\"{o}lder's inequality, so it applies verbatim to the present setting of $\sigma$-finite measure spaces.
The elements of the proof of Lemma \ref{lem:wadjLW} will still need to be recalled in Section \ref{sec:lowering_extremisers}, where we will study the near-extremisers. 

\begin{proof}[Proof of Theorem \ref{thm:diffUdp}(a)]
Apply Lemma \ref{lem:wadjLW} with $a=p$ and $b_i=p_i/2$ to the function $F = R_d f$ on $G^d$. By \eqref{eq:Rmarg}, the $i$th marginal of $F$ is precisely $F_i = (R_{d-1}f)^2$, and hence
\begin{align*}
\|F\|_{\textup{L}^{a}(G^d)} & = [f]_{\textup{U}^{d,p}(G)}^{2^d}, \\
\|F_i\|_{\textup{L}^{b_i}(G^{d-1})} & = \|R_{d-1}f\|_{\textup{L}^{p_i}(G^{d-1})}^2 = [f]_{\textup{U}^{d-1,p_i}(G)}^{2^d}.
\end{align*}
Taking the power $2^{-d}$ gives precisely the two asserted inequalities.
\end{proof}


\section{Lebesgue estimates and near-extremisers}
\label{sec:Lebest}

We first formulate an estimate obtained by iterative application of symmetric instances of Theorem \ref{thm:diffUdp}(a).

\begin{lemma}
If $d\geq2$, $j\in\{1,\ldots,d-1\}$, $0<s\leq2$, and $f\colon G\to[0,\infty)$ is measurable, then
\begin{equation}\label{eq:fwditer}
[f]_{\textup{U}^{d,\,ds/(2^{d-j}(j+s)-s)}(G)} \leq [f]_{\textup{U}^{j,s}(G)}.
\end{equation}
In particular, taking $j=s=1$,
\begin{equation}\label{eq:lowLpend}
[f]_{\textup{U}^{d,\,d/(2^d-1)}(G)} \leq \|f\|_{\textup{L}^1(G)}.
\end{equation}
\end{lemma}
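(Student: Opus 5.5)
The plan is induction on the degree, starting at degree $j$. At each step I apply the symmetric instance of Theorem~\ref{thm:diffUdp}(a), meaning all $\theta_i=1/d$ and all $p_i$ equal.

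\emph{One step.} Fix a degree $k\geq2$ and an exponent $q\in(0,2]$. Take $\theta_1=\cdots=\theta_k=1/k$ and $p_1=\cdots=p_k=q$ in Theorem~\ref{thm:diffUdp}(a), with $d$ replaced by $k$. The defining relation
\[ \frac{1}{k-1}\Bigl(1-\frac1p\Bigr)=\frac1k\Bigl(1-\frac2q\Bigr) \]
solves to
\[ \frac1p=\frac{q+2k-2}{kq}, \qquad\text{that is,}\qquad p=\frac{kq}{q+2k-2}. \]
This $p$ is positive. Moreover $p\leq1$ holds if and only if $(k-1)q\leq 2(k-1)$, i.e.\ $q\leq2$. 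So the hypotheses of the first part of Theorem~\ref{thm:diffUdp}(a) are met (positive weights summing to $1$, and $0<p\leq1$). Since all factors on the right coincide, their product is $[f]_{\textup{U}^{k-1,q}(G)}$, and we obtain
\[ [f]_{\textup{U}^{k,\,kq/(q+2k-2)}(G)}\leq [f]_{\textup{U}^{k-1,q}(G)}. \]

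\emph{Iteration.} Set $s_j:=s$ and $s_k:=k s_{k-1}/(s_{k-1}+2k-2)$ for $k=j+1,\ldots,d$. I claim that
\[ s_k=\frac{ks}{2^{k-j}(j+s)-s}. \]
At $k=j$ this reads $js/j=s$. For the inductive step, write $s_{k-1}=(k-1)s/D$ with $D=2^{k-1-j}(j+s)-s$. Then
\[ s_k=\frac{k(k-1)s/D}{(k-1)s/D+2(k-1)}=\frac{ks}{s+2D}=\frac{ks}{2^{k-j}(j+s)-s}. \]
The one-step inequality needs $s_{k-1}\in(0,2]$ at each stage. This holds for $s_j=s$ by assumption, and for $k>j$ the one-step analysis shows $s_k\leq1\leq2$. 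Also each degree used satisfies $k\geq j+1\geq2$, as Theorem~\ref{thm:diffUdp}(a) requires. Chaining the inequalities from $k=d$ down to $k=j+1$ gives \eqref{eq:fwditer}.

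\emph{The special case.} For $j=s=1$ the exponent is $d/(2^{d-1}\cdot2-1)=d/(2^d-1)$. For nonnegative $f$, Fubini gives
\[ [f]_{\textup{U}^{1,1}(G)}^2=\int_G\int_G f(x)f(x+h)\dd\mu(x)\dd\mu(h)=\|f\|_{\textup{L}^1(G)}^2, \]
which yields \eqref{eq:lowLpend}.

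There is no real obstacle. The only points needing care are the algebra of the recursion and checking that the constraint $q\leq2$ (equivalently $p\leq1$) persists along the chain, which is exactly where the hypothesis $0<s\leq2$ is used.
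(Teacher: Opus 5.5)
Your proposal is correct and is essentially the paper's argument: the symmetric instance $\theta_i=1/k$, $p_i=q$ of Theorem~\ref{thm:diffUdp}(a) gives $[f]_{\textup{U}^{k,\,kq/(q+2k-2)}(G)}\leq[f]_{\textup{U}^{k-1,q}(G)}$, which applies precisely when $q\leq2$ (equivalently $p\leq1$), and chaining these steps yields \eqref{eq:fwditer}. The differences are cosmetic: you iterate upward from degree $j$ using an explicit closed form for the recursion, whereas the paper phrases the same chaining as an induction on $d$; and you spell out the identity $[f]_{\textup{U}^{1,1}(G)}=\|f\|_{\textup{L}^1(G)}$, which the paper uses implicitly for the special case.
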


\begin{proof}
Take all weights $\theta_i$ equal to $1/d$ in Theorem \ref{thm:diffUdp}(a). If we take every lower-order exponent $p_i$ to be $s$, then the first inequality from that theorem gives
\begin{equation}\label{eq:fwdstep}
[f]_{\textup{U}^{d,\,ds/(2(d-1)+s)}(G)} \leq [f]_{\textup{U}^{d-1,s}(G)}.
\end{equation}
Note that we needed $s\leq2$ to guarantee that $p:=ds/(2(d-1)+s)\leq1$.
When $s=2$, \eqref{eq:fwdstep} is actually the identity
\[ [f]_{\textup{U}^{d,1}(G)}=[f]_{\textup{U}^{d-1,2}(G)}, \]
which follows from \eqref{eq:RL12}.

Now we prove \eqref{eq:fwditer} by induction on $d\geq2$. Note that the base case $d=2$ (when we only allow $j=1$) is a consequence of \eqref{eq:fwdstep}.
Take some $d\geq3$ and assume that the claim holds for $d-1$. If $j=d-1$, then the desired estimate is again just \eqref{eq:fwdstep}.
Otherwise, if $1\leq j\leq d-2$, then we use the induction hypothesis to obtain
\begin{equation}\label{eq:fwdstep2}
[f]_{\textup{U}^{d-1,\,(d-1)s/(2^{d-1-j}(j+s)-s)}(G)} \leq [f]_{\textup{U}^{j,s}(G)},
\end{equation}
while \eqref{eq:fwdstep} applied with 
\[ \frac{(d-1)s}{2^{d-1-j}(j+s)-s} \in (0,1] \]
in place of $s$ gives
\begin{equation}\label{eq:fwdstep3}
[f]_{\textup{U}^{d,\,ds/(2^{d-j}(j+s)-s)}(G)} \leq [f]_{\textup{U}^{d-1,\,(d-1)s/(2^{d-1-j}(j+s)-s)}(G)}.
\end{equation}
Combining \eqref{eq:fwdstep2} and \eqref{eq:fwdstep3}, we deduce \eqref{eq:fwditer} and the induction step is complete.
\end{proof}

\begin{proof}[Proof of Theorem \ref{thm:Lpest}(a)]
The lower endpoint $p=d/(2^d-1)$ is precisely \eqref{eq:lowLpend}, so we consider $d/(2^d-1)<p<\infty$.
We can also assume that $\|f\|_{\textup{L}^{2^d p/(d+p)}(G)}$ is finite and positive and thus normalise it, by homogeneity of the desired estimate, as
\[ \|f\|_{\textup{L}^{2^d p/(d+p)}(G)}=1. \]
Split $f= f_1^{\vartheta} f_2^{1-\vartheta}$, where
\[ f_1 := f^{2^d p/(d+p)}, \quad f_2 := f^{p/(d+p)}, \quad \vartheta:=\frac{d}{p(2^d-1)}, \]
so that
\[ \|f_1\|_{\textup{L}^1(G)} = \|f_2\|_{\textup{L}^{2^d}(G)}=1. \]
Using estimate \eqref {eq:just_Hoelder}, which was a simple consequence of H\"{o}lder's inequality, with $p_1=d/(2^d-1)$ and $p_2=\infty$ gives us
\begin{align*}
[f]_{\textup{U}^{d,p}(G)}
& \leq [f_1]_{\textup{U}^{d,\,d/(2^d-1)}(G)}^{\vartheta} [f_2]_{\textup{U}^{d,\infty}(G)}^{1-\vartheta} \\
& \leq \|f_1\|_{\textup{L}^1(G)}^{\vartheta} \|f_2\|_{\textup{L}^{2^d}(G)}^{1-\vartheta} =1,
\end{align*}
where the second inequality follows from \eqref{eq:lowLpend} and \eqref{eq:RL12}. This proves the assertion.

The constant one is optimal over all LCA groups. Equality holds, by \eqref{eq:subgfun}, for $f=c\1_{x_0+H}$ whenever $x_0\in G$ and $H$ is a compact and open subgroup of $G$.
\end{proof}

\begin{proof}[Proof of Theorem \ref{thm:Lpest}(b)]
By homogeneity we may assume that
\[ \|f\|_{\textup{L}^{2^d p/(d+p)}(G)}=1, \quad [f]_{\textup{U}^{d,p}(G)}\geq1-\varepsilon. \]

First suppose that $d/(2^d-1)<p<1$. Choose $0<\vartheta<1$ so that
\[ \frac{1}{p} = \frac{\vartheta(2^d-1)}d + \frac{1-\vartheta}{1},
\quad \text{i.e.,} \quad \vartheta = \frac{d(1-p)}{(2^d-d-1)p}, \]
and put 
\[ g := f^{2^d p/(d+p)}, \quad u := f^{p(d+1)/(d+p)}. \] 
Then $f=g^\vartheta u^{1-\vartheta}$ and 
\begin{align}
\|g\|_{\textup{L}^1(G)} & = 1 \label{eq:gis1} \\
\|u\|_{\textup{L}^{2^d/(d+1)}(G)} & = 1. \label{eq:uis1} 
\end{align}
Estimate \eqref {eq:just_Hoelder} with $p_1=d/(2^d-1)$ and $p_2=1$, followed by \eqref{eq:lowLpend} and \eqref{eq:gis1}, now gives
\begin{align*} 
1-\varepsilon & \leq [f]_{\textup{U}^{d,p}(G)} \leq [g]_{\textup{U}^{d,d/(2^d-1)}(G)}^{\vartheta} [u]_{\textup{U}^{d,1}(G)}^{1-\vartheta} \\
& \leq \|g\|_{\textup{L}^1(G)}^{\vartheta} \|u\|_{\textup{U}^{d}(G)}^{1-\vartheta} = \|u\|_{\textup{U}^{d}(G)}^{1-\vartheta},
\end{align*}
so that $\|u\|_{\textup{U}^{d}(G)} \geq 1 - O_{d,p}^{\varepsilon\to0}(\varepsilon)$.

If $p\geq1$, define $u$ as before and set
\[ v := f^{p/(d+p)}. \]
Now $f=u^{1/p}v^{1-1/p}$. We again have \eqref{eq:uis1} and
\begin{equation}\label{eq:vis1}
\|v\|_{\textup{L}^{2^d}(G)}=1.
\end{equation}
By estimate \eqref {eq:just_Hoelder} applied with $p_1=1$ and $p_2=\infty$, combined with \eqref{eq:RL12} and \eqref{eq:vis1},
\begin{align*} 
1-\varepsilon & \leq [f]_{\textup{U}^{d,p}(G)} \leq [u]_{\textup{U}^{d,1}(G)}^{1/p} [v]_{\textup{U}^{d,\infty}(G)}^{1-1/p} \\
& = \|u\|_{\textup{U}^{d}(G)}^{1/p} \|v\|_{\textup{L}^{2^d}(G)}^{1-1/p} = \|u\|_{\textup{U}^{d}(G)}^{1/p},
\end{align*}
which implies $\|u\|_{\textup{U}^{d}(G)} \geq 1 - O_{p}^{\varepsilon\to0}(\varepsilon)$.

Thus, in all cases of $p$,
\[ \|u\|_{\textup{U}^d(G)} \geq1-O_{d,p}^{\varepsilon\to0}(\varepsilon). \]
In combination with \eqref{eq:uis1} we see that the near-extremiser theorem of Eisner and Tao \cite[Thm.\,1.4]{EisnerTao} applies to $u$; recall \eqref{eq:ET_near}. There exist a compact open subgroup $H$ of $G$ and a point $x_0\in G$ such that
\[ \Bigl\|u-\frac{\1_{x_0+H}}{\mu(H)^{(d+1)/2^d}}\Bigr\|_{\textup{L}^{2^d/(d+1)}(G)} = o^{\varepsilon\to0}_{d,p}(1). \]
Indeed, the complex-valued statement \eqref{eq:ET_near} allows a polynomial phase, but, because $u$ is nonnegative, taking absolute values removes it without increasing the error.
We temporarily denote $r:=2^d/(d+1)$ and $u_0:=\mu(H)^{-1/r}\1_{x_0+H}$, so that the last display becomes
\[ \|u-u_0\|_{\textup{L}^{r}(G)} = o^{\varepsilon\to0}_{d,p}(1). \]
On the one hand, for every $\gamma\in(0,1]$ we have the pointwise inequality 
\[ |u^\gamma-u_0^\gamma|\leq |u-u_0|^\gamma, \]
so that
\[ \|u^\gamma-u_0^\gamma\|_{\textup{L}^{r/\gamma}(G)} 
\leq \|u-u_0\|_{\textup{L}^{r}(G)}^\gamma
= o^{\varepsilon\to0}_{d,p}(1). \]
On the other hand, for every $\gamma\in(1,r)$ we use
\[ |u^\gamma-u_0^\gamma| \leq |u-u_0| \bigl( \gamma |u|^{\gamma-1} + \gamma |u_0|^{\gamma-1} \bigr), \]
so that H\"{o}lder's inequality gives
\[ \|u^\gamma-u_0^\gamma\|_{\textup{L}^{r/\gamma}(G)} 
\leq \|u-u_0\|_{\textup{L}^{r}(G)} \bigl(\underbrace{\gamma \|u\|_{\textup{L}^r(G)}^{\gamma-1} + \gamma \|u_0\|_{\textup{L}^r(G)}^{\gamma-1}}_{2\gamma}\bigr)
= o^{\varepsilon\to0}_{d,p}(1). \]
Taking $\gamma=(d+p)/(p(d+1))$ and recalling the definitions of $u$, $u_0$, and $r$, we obtain
\[ \Bigl\|f-\frac{\1_{x_0+H}}{\mu(H)^{(d+p)/(2^d p)}}\Bigr\|_{\textup{L}^{2^d p/(d+p)}(G)} = o^{\varepsilon\to0}_{d,p}(1). \]
Undoing the initial normalisation proves the assertion.
\end{proof}


\section{The sharp Euclidean inequality}
\label{sec:Euclid}

We now compute the best constant on $\R^n$. If $t'$ denotes the conjugate exponent to $t\in(1,\infty)$, put
\[ A_t := \frac{t^{1/(2t)}}{(t')^{1/(2t')}}, \]
with $A_1$ and $A_\infty$ both being defined as $1$.

\begin{proof}[Proof of Theorem \ref{thm:sharpRn}(a)]
The following proof is a minor modification of the one from \cite[Sec.\,9]{EisnerTao}, only for $[\,\cdot\,]_{\textup{U}^{d,p}}$ rather than $\|\cdot\|_{\textup{U}^d}$.
We can work with an $f$ that is bounded and compactly supported. 

For $h'=(h_1,\ldots,h_{d-1})\in(\R^n)^{d-1}$, set
\[ F_{h'}(x) := \prod_{(\omega_1,\ldots,\omega_{d-1})\in\{0,1\}^{d-1}}f(x+\omega_1 h_1+\cdots+\omega_{d-1} h_{d-1}), \]
so that, for yet another vector $h_d\in\R^n$,
\[ (R_d f)(h',h_d) 
= \int_{\R^n} F_{h'}(x) F_{h'}(x+h_d) \dd x
= \bigl(F_{h'}\ast \widetilde{F}_{h'}\bigr)(h_d). \]
The sharp form of Young's inequality due to Beckner \cite{Beckner1975} gives
\[ \bigl\|F_{h'}\ast \widetilde{F}_{h'}\bigr\|_{\textup{L}^{p}(\R^n)}
\leq \Bigl(\frac{A_{2p/(p+1)}^2}{A_p}\Bigr)^{n} \|F_{h'}\|_{\textup{L}^{2p/(p+1)}(\R^n)}^2 \]
and the identity
\[ \|F_{h'}\|_{\textup{L}^{2p/(p+1)}(\R^n)}^{2p/(p+1)} = R_{d-1}(f^{2p/(p+1)})(h') \]
then yields the recursive estimate
\begin{equation}\label{eq:Youngrec}
[f]_{\textup{U}^{d,p}(\R^n)}
\leq \Bigl(\frac{A_{2p/(p+1)}^2}{A_p}\Bigr)^{n/2^d} \bigl[f^{2p/(p+1)}\bigr]_{\textup{U}^{d-1,p+1}(\R^n)}^{(p+1)/(2p)}
\end{equation}
for $d\geq1$. In order to make this estimate meaningful and true for $d=1$ as well, we use the convention $[g]_{\textup{U}^{0,q}(\R^n)}=\int_{\R^n}g$ for every $q\in(0,\infty)$.

To prove the estimate claimed in the theorem statement, we repeatedly apply \eqref{eq:Youngrec}. 
Namely, for any $j\in\{0,1,\ldots,d-1\}$ using \eqref{eq:Youngrec} with $d$ replaced by $d-j$, $p$ replaced by $p+j$, and $f$ replaced by $f^{2^j p/(p+j)}$ we obtain
\[ \bigl[f^{2^j p/(p+j)}\bigr]_{\textup{U}^{d-j,p+j}(\R^n)}
\leq \Bigl(\frac{A_{2(p+j)/(p+j+1)}^2}{A_{p+j}}\Bigr)^{n/2^{d-j}} \bigl[f^{2^{j+1} p/(p+j+1)}\bigr]_{\textup{U}^{d-j-1,p+j+1}(\R^n)}^{(p+j+1)/(2(p+j))}. \]
Raising this to $(p+j)/(2^j p)$ we get
\begin{equation}\label{eq:Youngrec2}
\begin{aligned}
\bigl[f^{2^j p/(p+j)}\bigr]_{\textup{U}^{d-j,p+j}(\R^n)}^{(p+j)/(2^j p)} \leq\, & \Bigl(\frac{A_{2(p+j)/(p+j+1)}^2}{A_{p+j}}\Bigr)^{n(p+j)/(2^d p)} \\
& \bigl[f^{2^{j+1} p/(p+j+1)}\bigr]_{\textup{U}^{d-j-1,p+j+1}(\R^n)}^{(p+j+1)/(2^{j+1}p)} . 
\end{aligned}
\end{equation}
Now observe that the inequalities \eqref{eq:Youngrec2} indexed by $j=0,1,\ldots,d-1$ can be chained and they together imply
\begin{equation}\label{eq:Youngiter}
[f]_{\textup{U}^{d,p}(\R^n)}
\leq \prod_{j=0}^{d-1} \Bigl(\frac{A_{2(p+j)/(p+j+1)}^2}{A_{p+j}}\Bigr)^{n(p+j)/(2^d p)} \|f\|_{\textup{L}^{2^d p/(d+p)}(\R^n)}.
\end{equation}

It remains to verify that \eqref{eq:Youngiter} is precisely the asserted estimate with the claimed constant.
This follows from the computation
\[ \prod_{j=0}^{d-1} \frac{A_{2(p+j)/(p+j+1)}^{2(p+j)}}{A_{p+j}^{p+j}} 
= \prod_{j=0}^{d-1} \frac{2(p+j)^{(p+j)/2}}{(p+j+1)^{(p+j+1)/2}} = 2^d \frac{p^{p/2}}{(p+d)^{(p+d)/2}} \]
and we only need to raise the computed product to $n/(2^d p)$.

To prove sharpness, let $g(x)=e^{-\pi|x|^2}$, when we can compute
\[ R_d g(h_1,\ldots,h_d) = 2^{-dn/2}\exp\Bigl(-\frac{\pi 2^d}{4} \sum_{i=1}^d|h_i|^2\Bigr) \]
for $h_1,\ldots,h_d\in\R^n$.
It follows that
\begin{align*}
[g]_{\textup{U}^{d,p}(\R^n)}
& = 2^{-dn/(2^{d+1})}(2^{d-2}p)^{-nd/(2^{d+1}p)}, \\
\|g\|_{\textup{L}^{2^d p/(d+p)}(\R^n)}
& = \Bigl(\frac{2^d p}{d+p}\Bigr)^{-n(d+p)/(2^{d+1}p)},
\end{align*}
and their ratio is precisely the displayed constant in the theorem. Moreover, Gaussians attain equality at every sharp Young step \eqref{eq:Youngrec2}. 
Conversely, since $d\geq2$, the final Young inequality in \eqref{eq:Youngrec2}, the one for $j=d-1$, is nondegenerate and its equality forces every positive power of every extremiser $f$ to be a Gaussian; see more details in the proof of part (b) below.
\end{proof}

\begin{proof}[Proof of Theorem \ref{thm:sharpRn}(b)]
We proceed similarly to \cite[Sec.\,3]{Neuman2020}.
By homogeneity, normalise $\|f\|_{\textup{L}^{2^d p/(d+p)}(\R^n)}=1$, so that the hypothesis is
\[ [f]_{\textup{U}^{d,p}(\R^n)}
\geq(1-\varepsilon)\Bigl(\frac{2^{2d}p^p}{(d+p)^{d+p}}\Bigr)^{n/(2^{d+1}p)}. \]
We see that $f$ is a near-extremiser of \eqref{eq:Youngiter} up to the quotient of left-hand and right-hand sides at least $1-\varepsilon$.
The same is then true at each application of sharp Young's inequality in the derivation of \eqref{eq:Youngiter}, i.e., besides \eqref{eq:Youngrec2} we also have
\begin{equation}\label{eq:Youngrec3}
\begin{aligned}
\bigl[f^{2^j p/(p+j)}\bigr]_{\textup{U}^{d-j,p+j}(\R^n)}^{(p+j)/(2^j p)} \geq\, & (1-\varepsilon) \Bigl(\frac{A_{2(p+j)/(p+j+1)}^2}{A_{p+j}}\Bigr)^{n(p+j)/(2^d p)} \\
& \bigl[f^{2^{j+1} p/(p+j+1)}\bigr]_{\textup{U}^{d-j-1,p+j+1}(\R^n)}^{(p+j+1)/(2^{j+1}p)} . 
\end{aligned}
\end{equation}
In particular, consider the final Young step, obtained for $j=d-1$, and put
\[ u := f^{2^{d-1}p/(p+d-1)}. \]
Then 
\[ \|u\|_{\textup{L}^{2(p+d-1)/(p+d)}(\R^n)}=1 \]
and \eqref{eq:Youngrec3} reads
\begin{equation}\label{eq:Yfinal}
\|u\ast \widetilde{u}\|_{\textup{L}^{p+d-1}(\R^n)}
\geq \bigl( 1-O_{d,p}^{\varepsilon\to0}(\varepsilon) \bigr)
\Bigl(\frac{A_{2(p+d-1)/(p+d)}^2}{A_{p+d-1}}\Bigr)^n \|u\|_{\textup{L}^{2(p+d-1)/(p+d)}(\R^n)}^2 .
\end{equation}
Christ's characterisation of near-extremisers for sharp Young's inequality \cite[Thm.\,1.1]{ChristYoung2019}, applied to \eqref{eq:Yfinal}, gives a Gaussian $u_0$ such that
\[ \|u_0\|_{\textup{L}^{2(p+d-1)/(p+d)}(\R^n)}=1 \]
and
\begin{equation}\label{eq:YGauss}
\|u-u_0\|_{\textup{L}^{2(p+d-1)/(p+d)}(\R^n)} = o^{\varepsilon\to0}_{d,p,n}(1).
\end{equation}
If the Gaussian supplied by the theorem has a phase, replace it by its absolute value. This does not increase the error because $u$ is nonnegative. 
Finally put 
\[ g := u_0^{(p+d-1)/(2^{d-1}p)}. \]
This is a positive Gaussian and 
\[ \|g\|_{\textup{L}^{2^d p/(d+p)}(\R^n)}=1. \]
The same reasoning as at the end of the proof of Theorem \ref{thm:Lpest}(b), only with $\gamma=(p+d-1)/(2^{d-1}p)$, then transforms \eqref{eq:YGauss} into
\[ \|f-g\|_{\textup{L}^{2^d p/(d+p)}(\R^n)} = o^{\varepsilon\to0}_{d,p,n}(1). \]
Undoing the normalisation proves the theorem.
\end{proof}

It is fair to remark that the above proof of part (b) was rather short due to our assumption that $f$ is nonnegative, which is natural in the context of functionals $[\,\cdot\,]_{\textup{U}^{d,p}}$. Characterising near-extremisers for the true Gowers norms on Euclidean spaces is technically more complicated precisely due to the fact that $f$ can be complex; see \cite{Neuman2020}.


\section{Near-extremisers of the degree-lowering estimates}
\label{sec:lowering_extremisers}

The following lemma merely recalls the near-extremisers of H\"{o}lder's inequality. It is a minor variant of \cite[Lemma\,5.1]{EisnerTao}.

\begin{lemma}[from {\cite{EisnerTao}}]\label{lem:stHold} 
Fix $d\geq2$ and $\gamma_1,\ldots,\gamma_d\in(0,1)$ with $\sum_i\gamma_i=1$, and write $\boldsymbol{\gamma}=(\gamma_1,\ldots,\gamma_d)$. Let $u_1,\ldots,u_d\colon X\to[0,\infty)$ be measurable functions on a $\sigma$-finite measure space $(X,\nu)$ that satisfy $\int_X u_i \dd \nu = 1$ for $i=1,\ldots,d$. If
\[ \int_X\prod_{i=1}^d u_i^{\gamma_i}\dd\nu\geq1-\varepsilon, \]
then, for every $i,j\in\{1,\ldots,d\}$,
\begin{align*}
\|u_i-u_j\|_{\textup{L}^1(X)}
& = o^{\varepsilon\to0}_{\boldsymbol{\gamma}}(1),\\
\Bigl\|u_i-\prod_{l=1}^d u_l^{\gamma_l}\Bigr\|_{\textup{L}^1(X)}
& = o^{\varepsilon\to0}_{\boldsymbol{\gamma}}(1).
\end{align*}
\end{lemma}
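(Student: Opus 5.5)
Since Lemma \ref{lem:stHold} is a stability form of H\"older's inequality, I will reduce it to a quantitative strict-convexity statement. The tool is the weighted AM--GM inequality
\[ \prod_{i=1}^d u_i^{\gamma_i} \leq \sum_{i=1}^d \gamma_i u_i \]
together with a quantitative improvement of it. Set $m:=\sum_i\gamma_iu_i$ and $g:=\prod_i u_i^{\gamma_i}$. Then $\int_X m\dd\nu=1$, so the hypothesis becomes
\[ \int_X (m-g)\dd\nu\leq\varepsilon, \]
with a pointwise nonnegative integrand. The rest of the proof converts this smallness of the AM--GM defect into $\textup{L}^1$ smallness of $u_i-u_j$ and of $u_i-g$.

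The key pointwise estimate is homogeneous. Put $\gamma_{\min}:=\min_i\gamma_i>0$. I will show
\[ |u_i-u_j| \lesssim_{\boldsymbol{\gamma}} \sqrt{(m-g)\,m} \]
wherever $m>0$. By homogeneity, take $m=1$ and view $(u_1,\ldots,u_d)$ in the compact simplex-like set $\{t\in[0,\infty)^d : \sum_i\gamma_it_i=1\}$. There $1-\prod_i t_i^{\gamma_i}$ is continuous and vanishes only at $t=(1,\ldots,1)$. The cleanest way to get the square-root rate is to write $t_i=e^{s_i}$ near the equality point and expand both sides to second order. The term $1-\prod_i t_i^{\gamma_i}$ is then comparable to the weighted variance $\sum_i\gamma_i(s_i-\bar s)^2$. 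Away from the equality point, compactness gives a positive lower bound for the defect while $|t_i-t_j|\leq 2/\gamma_{\min}$. Hence $|t_i-t_j|^2\lesssim 1-\prod_i t_i^{\gamma_i}$ everywhere on the set.

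Cauchy--Schwarz then gives
\[ \|u_i-u_j\|_{\textup{L}^1(X)} \lesssim_{\boldsymbol\gamma} \Bigl(\int_X(m-g)\dd\nu\Bigr)^{1/2}\Bigl(\int_X m\dd\nu\Bigr)^{1/2}\leq C_{\boldsymbol\gamma}\varepsilon^{1/2}, \]
which is even an explicit $o(1)$ rate. For the second assertion I will use the triangle inequality:
\[ \|u_i-g\|_{\textup{L}^1(X)} \leq \|u_i-m\|_{\textup{L}^1(X)}+\|m-g\|_{\textup{L}^1(X)}. \]
The second term is at most $\varepsilon$. The first is at most $\sum_j\gamma_j\|u_i-u_j\|_{\textup{L}^1(X)}=O(\varepsilon^{1/2})$.

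The only real obstacle is the pointwise quantitative AM--GM bound, in particular making the compactness argument uniform near boundary points where some $t_i=0$. At such points $\prod_i t_i^{\gamma_i}=0$, so the defect equals $1$ and the bound holds trivially. The only delicate region is therefore the neighbourhood of $(1,\ldots,1)$, and the Taylor expansion handles it. Measurability and $\sigma$-finiteness play no role beyond justifying the integrals.
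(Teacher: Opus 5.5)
Your argument is correct, and it takes a different route from the paper. The paper does not prove the lemma from scratch. It quotes Eisner--Tao's Lemma~5.1, which gives the multiplicative statements $u_i=(1+o(1))u_j$ and $u_i=(1+o(1))\prod_l u_l^{\gamma_l}$ outside an exceptional set $E$ with $\int_E u_i\dd\nu=o(1)$, and then integrates.

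You instead prove a pointwise stability form of weighted AM--GM, $|u_i-u_j|^2\lesssim_{\boldsymbol\gamma} m(m-g)$, valid everywhere with no exceptional set. It follows from homogeneity, compactness of $\{t\geq0:\sum_i\gamma_it_i=1\}$ (on which $1-\prod_it_i^{\gamma_i}$ is continuous, including at boundary points), and the expansion $1-\prod_i t_i^{\gamma_i}=\tfrac12\sum_i\gamma_i(t_i-1)^2+O(|t-\mathbf 1|^3)$ at the equality point. You then integrate using Cauchy--Schwarz together with $\int(m-g)\dd\nu\leq\varepsilon$ and $\int m\dd\nu=1$.

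This makes the lemma self-contained and gives the explicit rate $O_{\boldsymbol\gamma}(\varepsilon^{1/2})$ for both conclusions. That rate is sharp: take $\gamma_1=\gamma_2=\frac12$, $u_1=1\pm\delta$ on the two halves of $[0,1]$, and $u_2=2-u_1$. Then $\varepsilon\approx\delta^2/2$ while $\|u_1-u_2\|_{\textup{L}^1}=2\delta$. The cited route carries finer structural information, namely that the ratios $u_i/u_j$ are close to $1$ on most of the mass. The paper, however, only ever uses the $\textup{L}^1$ conclusions, and your version delivers those more quantitatively.
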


\begin{proof}
Eisner and Tao \cite[Lemma~5.1]{EisnerTao} conclude 
\[ u_i = \bigl( 1+o^{\varepsilon\to0}_{\boldsymbol{\gamma}}(1) \bigr) u_j \ \text{ for } 1\leq i,j\leq d \]
and
\[ u_i = \bigl( 1+o^{\varepsilon\to0}_{\boldsymbol{\gamma}}(1) \bigr) \prod_{l=1}^d u_l^{\gamma_l} \ \text{ for } 1\leq i\leq d \]
outside of a measurable set $E\subseteq X$ such that
\[ \int_E u_i \dd\nu = o^{\varepsilon\to0}_{\boldsymbol{\gamma}}(1) \ \text{ for } 1\leq i\leq d. \]
Both claims then easily follow by integration.
\end{proof}

The next lemma discusses near-extremisers of the Loomis--Whitney inequality. Some of its versions could be considered folklore, but we still include a detailed proof.
Let $\pi_i\colon X^d\to X^{d-1}$ denote the $i$th ``coordinate deletion'' projection.

\begin{lemma}
\label{lem:LWfact}
Fix $d\geq2$. Let $(X,\nu)$ be a $\sigma$-finite measure space and let the measurable functions $F_i\colon X^{d-1}\to[0,\infty]$, $i=1,\ldots,d$, satisfy
\[ \int_{X^{d-1}}F_i^{d-1}\dd\nu^{d-1}=1 \quad \text{for } 1\leq i\leq d \]
and
\[ \int_{X^d}\prod_{i=1}^dF_i(\pi_i x)\dd\nu^d(x) \geq 1-\varepsilon. \]
Then there exist measurable functions $r_1,\ldots,r_d\colon X\to[0,\infty)$ such that
\[ \int_X r_i \dd\nu = 1 \ \text{ for } 1\leq i\leq d \]
and
\begin{equation}\label{eq:LWfunfact}
\Bigl\|F_i^{d-1}-\bigotimes_{j\ne i}r_j\Bigr\|_{\textup{L}^1(X^{d-1})} = o^{\varepsilon\to0}_d(1) \quad\text{for } 1\leq i\leq d.
\end{equation}
\end{lemma}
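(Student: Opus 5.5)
We would prove the lemma by induction on $d$. At each stage we follow the classical Hölder-based proof of the Loomis--Whitney inequality and use the fact that near-equality in the total estimate forces near-equality in every Hölder step. Lemma \ref{lem:stHold} then converts each such near-equality into $\textup{L}^1$ closeness.

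\textbf{Base case and first step.} The base case $d=2$ is trivial. Here $\int_{X^2}F_1(x_2)F_2(x_1)\dd\nu^2=\|F_1\|_{\textup{L}^1}\|F_2\|_{\textup{L}^1}=1$ exactly, so we may take $r_1:=F_2$ and $r_2:=F_1$ with zero error.

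For $d\geq3$, we write $x=(x',x_d)$ and integrate out $x_d$ first. The factor $F_d(x')$ does not depend on $x_d$. Hölder's inequality with $d-1$ equal exponents gives
\[ \int_X\prod_{i<d}F_i(\pi_i x)\dd\nu(x_d)\leq\prod_{i<d}G_i(\pi_i' x')^{1/(d-1)},\qquad G_i:=\int_X F_i^{d-1}\dd\nu(x_d), \]
where $G_i$ is a function on $X^{d-2}$ and $\pi_i'$ deletes the $i$th coordinate of $x'\in X^{d-1}$. We then apply Hölder in $x'$ with exponents $d-1$ and $(d-1)/(d-2)$, separating $F_d$ from $\prod_{i<d}G_i^{1/(d-1)}$. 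This leaves the quantity $\int_{X^{d-1}}\prod_{i<d}G_i(\pi_i'x')^{1/(d-2)}\dd\nu^{d-1}(x')$. It is exactly a $(d-1)$-dimensional Loomis--Whitney expression for the functions $H_i:=G_i^{1/(d-2)}$, which satisfy $\int H_i^{d-2}=1$.

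Each of these three inequalities must therefore hold with ratio at least $1-\varepsilon$. Near-equality in the last one lets us apply the induction hypothesis to the $H_i$. This produces densities $r_1,\ldots,r_{d-1}$ with $G_i\approx\bigotimes_{j\neq i,\,j<d}r_j$ in $\textup{L}^1(X^{d-2})$. Near-equality in the middle Hölder step, after normalising $u_1:=F_d^{d-1}$ and $u_2:=\bigl(\prod_{i<d}G_i^{1/(d-1)}\bigr)^{(d-1)/(d-2)}$ to unit mass, gives via Lemma \ref{lem:stHold} that $F_d^{d-1}$ is $\textup{L}^1$-close to $\prod_{i<d}H_i(\pi_i'\cdot)$. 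Combining this with the induction output, we expect $F_d^{d-1}$ to be close to $r_1\otimes\cdots\otimes r_{d-1}$, up to controlling products of approximations. That yields \eqref{eq:LWfunfact} for $i=d$.

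\textbf{Remaining indices.} By symmetry of the hypotheses, we could repeat the argument with any coordinate playing the role of $x_d$. The cleaner option is to define $r_d$ directly as a marginal of some $F_i^{d-1}$ with $i<d$, namely the marginal obtained by integrating out all coordinates except $x_d$. We would then use near-equality in the first, pointwise Hölder step. After integrating against the weight $F_d(x')\dd\nu^{d-1}(x')$, this says that for most $x'$ the slice functions $x_d\mapsto F_i^{d-1}(\pi_i(x',x_d))$ are nearly proportional to one another. Together with the already established product structure of the $G_i$, this should show that $F_i^{d-1}\approx\bigl(\bigotimes_{j\neq i,\,j<d}r_j\bigr)\otimes r_d$. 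Along the way we must also check that the candidate marginals coming from different $i$ agree up to $o(1)$ in $\textup{L}^1$. This follows by taking marginals of the approximate identities, since marginalisation is an $\textup{L}^1$ contraction.

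\textbf{Main obstacle.} The hardest step is this last transfer. The first Hölder near-equality only holds in an averaged sense with respect to the weight $F_d(x')$, not uniformly in $x'$. Turning it into $\textup{L}^1$ closeness of the $F_i^{d-1}$ requires a weighted version of Lemma \ref{lem:stHold}: apply it on the measure space $X^{d-1}\times X$, with the slices normalised by $G_i$, and use that $F_d$ is already known to be close to a product. This must be done while keeping the error uniform, that is, depending only on $d$ and not on $(X,\nu)$. Here we would use truncation of the exceptional sets in the style of Eisner and Tao's proof of Lemma \ref{lem:stHold}.
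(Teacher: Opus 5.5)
Your induction, the three-step chain (H\"older in $x_d$, H\"older in $x'$, then the $(d-1)$-dimensional Loomis--Whitney inequality for $H_i=G_i^{1/(d-2)}$), and your treatment of the index $i=d$ coincide with the paper's proof. The problem is the step for $i<d$, and the obstacle you single out there is not the real one.

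Write $F_i^{d-1}=G_i\rho_i$ with $\int_X\rho_i\dd\nu(x_d)=1$. Put $A=\prod_{i<d}G_i^{1/(d-1)}$ and $\kappa(x')=\int_X\prod_{i<d}\rho_i^{1/(d-1)}\dd\nu(x_d)\leq1$. The first H\"older step then reads $\int F_dA\kappa\geq1-\varepsilon$. The middle step, using the second conclusion of Lemma~\ref{lem:stHold}, already gives $\|F_dA-P\|_{\textup{L}^1}=o(1)$ with $P=r_1\otimes\cdots\otimes r_{d-1}$. Hence $\int P\kappa=1-o(1)$. Lemma~\ref{lem:stHold} holds on any $\sigma$-finite space, so it applies directly on $X^d$ with the measure $P(x')\dd\nu^{d-1}(x')\dd\nu(x_d)$ and gives $\int P|\rho_i-\rho_j|=o(1)$. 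No truncation and no new weighted lemma are needed.

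The genuine gap is what comes after this. Pairwise closeness of the slices, agreement of their $x_d$-marginals, and the product structure of the $G_i$ do not by themselves show that the common slice is close to a function of $x_d$ alone. Your phrase \emph{this should show} hides exactly the crux.

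The missing idea is that $\rho_i$ does not depend on $x_i$. Let $E_j$ average in $x_j$ against $r_j(x_j)\dd\nu(x_j)$, and take all norms in $\textup{L}^1(P\dd\nu^d)$. Because $P$ is a product containing the factor $r_j$, the $E_j$ are commuting contractions, and $E_j\rho_j=\rho_j$. This is precisely why $F_dA$ must be replaced by $P$. It follows that $\|\rho_1-E_j\rho_1\|\leq\|\rho_1-\rho_j\|+\|E_j(\rho_j-\rho_1)\|\leq2\|\rho_1-\rho_j\|$. Set $r_d:=E_1\cdots E_{d-1}\rho_1$, which depends only on $x_d$. Telescoping over $j$ gives $\|\rho_1-r_d\|=o(1)$, and hence $\|\rho_i-r_d\|=o(1)$ for every $i<d$.

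Integrating out $x_i$ then yields
$\|F_i^{d-1}-(\bigotimes_{j\neq i,\,j<d}r_j)\otimes r_d\|_{\textup{L}^1}\leq\|G_i-\bigotimes_{j\neq i,\,j<d}r_j\|_{\textup{L}^1}+\|\rho_i-r_d\|=o(1)$.

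Your choice of $r_d$ as the plain $x_d$-marginal of $F_1^{d-1}$ is harmless. It differs from this $r_d$ in $\textup{L}^1$ by at most $\|G_1-\bigotimes_{2\leq j\leq d-1}r_j\|_{\textup{L}^1}=o(1)$. However, without the averaging argument, nothing in your plan shows that the individual slices are close to it.
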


\begin{proof}
We argue by induction on $d$. The case $d=2$ follows from Fubini's theorem. Let $d\geq3$ and write $x=(y,x_d)$ with $y=(x_1,\ldots,x_{d-1})$. For $i=1,\ldots,d-1$ set
\begin{align*}
& g_i(x_1,\ldots,x_{i-1},x_{i+1},\ldots,x_{d-1}) \\
& :=\Bigl(\int_XF_i(x_1,\ldots,x_{i-1},x_{i+1},\ldots,x_{d-1},x_d)^{d-1}\dd\nu(x_d)\Bigr)^{1/(d-1)}, 
\end{align*}
\[ A(y):=\prod_{i=1}^{d-1} g_i(x_1,\ldots,x_{i-1},x_{i+1},\ldots,x_{d-1}), \]
and
\[ L(y) := \int_X \prod_{i=1}^{d-1} F_i(x_1,\ldots,x_{i-1},x_{i+1},\ldots,x_{d-1},x_d)\dd\nu(x_d). \]
Applying H\"{o}lder's inequality first on the $x_d$-fibre and then on $X^{d-1}$, followed by the $(d-1)$-dimensional Loomis--Whitney inequality, gives
\begin{equation}\label{eq:LWHchain}
\begin{aligned}
1-\varepsilon
& \leq\int_{X^{d-1}} F_d L
\leq\int_{X^{d-1}} F_d A \\
& \leq \underbrace{\Bigl(\int_{X^{d-1}} F_d^{d-1}\Bigr)^{1/(d-1)}}_{=1}
\Bigl(\int_{X^{d-1}} A^{(d-1)/(d-2)}\Bigr)^{(d-2)/(d-1)}
\leq1.
\end{aligned}
\end{equation}
For $\phi_i:=g_i^{(d-1)/(d-2)}$, $i=1,\ldots,d-1$, one has
\[ \int_{X^{d-2}} \phi_i^{d-2} = \int_{X^{d-2}} g_i^{d-1}=1, \]
and 
\[ \int_{X^{d-1}} \prod_{i=1}^{d-1} \phi_i(x_1,\ldots,x_{i-1},x_{i+1},\ldots,x_{d-1}) \dd\nu^{d-1}(y) = \int_{X^{d-1}} A^{(d-1)/(d-2)} = 1-o^{\varepsilon\to0}_d(1). \]
The induction hypothesis, applied to the functions $\phi_1,\ldots,\phi_{d-1}$, gives nonnegative functions $r_1,\ldots,r_{d-1}$ on $X$ with integral $1$ and such that
\begin{equation}\label{eq:LWimarg}
\Bigl\|g_i^{d-1}-\bigotimes_{\substack{1\leq j\leq d-1\\j\neq i}}r_j\Bigr\|_{\textup{L}^1(X^{d-2})} = o^{\varepsilon\to0}_d(1) \quad\text{for } 1\leq i\leq d-1.
\end{equation}
Writing 
\[ P(y) := \prod_{j=1}^{d-1} r_j(x_j), \]
use $|s^{1/(d-2)}-t^{1/(d-2)}|^{d-2}\leq|s-t|$ for $s,t\in[0,\infty)$ and telescope the products. The lower-dimensional Loomis--Whitney inequality applied to each mixed term gives
\begin{equation}\label{eq:Aqprod}
\|A^{(d-1)/(d-2)}-P\|_{\textup{L}^1(X^{d-1})} = o^{\varepsilon\to0}_d(1).
\end{equation}
By \eqref{eq:LWHchain}, 
\[ \int_{X^{d-1}}A^{(d-1)/(d-2)}\dd\nu^{d-1} = 1-o^{\varepsilon\to0}_d(1). \]
The third inequality in \eqref{eq:LWHchain} is a nearly extremised H\"{o}lder's inequality between $F_d^{d-1}$ and 
\[ \frac{A^{(d-1)/(d-2)}}{\int_{X^{d-1}}A^{(d-1)/(d-2)}\dd\nu^{d-1}}, \]
with weights $1/(d-1)$ and $(d-2)/(d-1)$. Lemma \ref{lem:stHold}, together with \eqref{eq:Aqprod}, yields
\begin{equation}\label{eq:globHold}
\|F_d^{d-1}-P\|_{\textup{L}^1(X^{d-1})} + \|F_dA-P\|_{\textup{L}^1(X^{d-1})} = o^{\varepsilon\to0}_d(1).
\end{equation}

Define
\[ \rho_i(x_1,\ldots,x_{i-1},x_{i+1},\ldots,x_{d-1},x_d) := \frac{F_i(x_1,\ldots,x_{i-1},x_{i+1},\ldots,x_{d-1},x_d)^{d-1}}{g_i(x_1,\ldots,x_{i-1},x_{i+1},\ldots,x_{d-1})^{d-1}} \]
for $i=1,\ldots,d-1$, whenever $g_i(x_1,\ldots,x_{i-1},x_{i+1},\ldots,x_{d-1})>0$.
Extend these definitions (somewhat arbitrarily) such that every $\rho_i(x_1,\ldots,x_{i-1},x_{i+1},\ldots,x_{d-1},\cdot)$ is a nonnegative function of integral $1$ and $F_i^{d-1}=g_i^{d-1}\rho_i$. Put
\[ \kappa(y):=\int_X\prod_{i<d}\rho_i(x_1,\ldots,x_{i-1},x_{i+1},\ldots,x_{d-1},x_d)^{1/(d-1)} \dd\nu(x_d). \]
Then $0\leq\kappa\leq1$, $L=A\kappa$, and \eqref{eq:LWHchain}--\eqref{eq:globHold} imply
\begin{equation}\label{eq:fibaff}
\int_{X^{d-1}}P(y)\kappa(y)\dd\nu^{d-1}(y) = 1 - o^{\varepsilon\to0}_d(1).
\end{equation}
Now we concentrate on the space $X^d$ with measure $P(y)\dd\nu^{d-1}(y)\dd\nu(x_d)$. The functions $\rho_i$ have integral $1$, so Lemma \ref{lem:stHold}, applied to \eqref{eq:fibaff}, gives
\[ \int_{X^d} P|\rho_i-\rho_j| = o^{\varepsilon\to0}_d(1)\quad\text{for } 1\leq i,j\leq d-1. \]
For $1\leq j\leq d-1$ define $E_j$ to be averaging in $x_j$ against $r_j(x_j)\dd\nu(x_j)$, leaving all other variables, including $x_d$, fixed. These operators commute and are contractions on $\textup{L}^1(P(y)\dd\nu^{d-1}(y)\dd\nu(x_d))$. Since $\rho_j$ is independent of $x_j$, we have $E_j \rho_j=\rho_j$. Hence,
\begin{align*} 
\| \rho_1 - E_j \rho_1 \|_{\textup{L}^1(P\dd\nu^d)}
& \leq \|\rho_1-\rho_j\|_{\textup{L}^1(P\dd\nu^d)} + \|E_j(\rho_j-\rho_1)\|_{\textup{L}^1(P\dd\nu^d)} \\
& \leq 2\|\rho_1 - \rho_j\|_{\textup{L}^1(P\dd\nu^d)} = o^{\varepsilon\to0}_d(1). 
\end{align*}
Define $r_d$ by
\[ r_d(x_d) := \int_{X^{d-1}} \rho_1(y,x_d) P(y)\dd\nu^{d-1}(y)
= (E_1 \cdots E_{d-1} \rho_1)(x_1,\ldots,x_d). \]
By telescoping the commuting contractions,
\[ \|\rho_1 - r_d\|_{\textup{L}^1(P\dd\nu^d)}
\leq \sum_{j=1}^{d-1} \|\rho_1 - E_j \rho_1\|_{\textup{L}^1(P\dd\nu^d)}
\leq 2 \sum_{j=2}^{d-1} \|\rho_1 - \rho_j\|_{\textup{L}^1(P\dd\nu^d)} = o^{\varepsilon\to0}_d(1). \]
The triangle inequality now gives
\begin{equation}\label{eq:rhocom}
\|\rho_i-r_d\|_{\textup{L}^1(P\dd\nu^d)} = o^{\varepsilon\to0}_d(1) \quad\text{for } 1\leq i\leq d-1,
\end{equation}
while Fubini's theorem yields $\int_X r_d \dd\nu = 1$. In particular $r_d$ is finite almost everywhere.
Because $\rho_i$ is independent of $x_i$, integration in $x_i$ against $r_i$ converts \eqref{eq:rhocom} into the corresponding estimate weighted by $\bigotimes_{\substack{1\leq j\leq d-1\\j\ne i}}r_j$. Namely, 
\begin{align*} 
& \Bigl\| F_i^{d-1} - r_d \bigotimes_{\substack{1\leq j\leq d-1\\j\ne i}}r_j \Bigr\|_{\textup{L}^1(X^{d-1})} \\
& \leq \Bigl\| \rho_i (g_i^{d-1} - \bigotimes_{\substack{1\leq j\leq d-1\\j\ne i}}r_j ) \Bigr\|_{\textup{L}^1(X^{d-1})} + \Bigl\| (\rho_i - r_d) \bigotimes_{\substack{1\leq j\leq d-1\\j\ne i}}r_j \Bigr\|_{\textup{L}^1(X^{d-1})} \\
& = \Bigl\| g_i^{d-1} - \bigotimes_{\substack{1\leq j\leq d-1\\j\ne i}}r_j \Bigr\|_{\textup{L}^1(X^{d-2})} + \| \rho_i - r_d \|_{\textup{L}^1(P\dd\nu^d)} = o^{\varepsilon\to0}_d(1). 
\end{align*}
Combining that estimate with \eqref{eq:LWimarg} proves \eqref{eq:LWfunfact} for $i\leq d-1$, while the case $i=d$ was already contained in \eqref{eq:globHold}. This closes the induction.
\end{proof}

We next prove the tensor-product description of the near-extremisers in Theorem \ref{thm:diffUdp}(b).

\begin{proof}[Proof of Theorem \ref{thm:diffUdp}(b)]
We work on the fixed group $G$ and $\pi_i\colon G^d\to G^{d-1}$ still stands for the $i$th coordinate hyperplane projection.

Suppose that all parameters are as in the assumptions of Theorem \ref{thm:diffUdp}(b). Denote the ratio of the ``smaller side'' to the ``larger side'' by $\mathfrak{R}(f)$, i.e.,
\[ \mathfrak{R}(f) := \frac{[f]_{\textup{U}^{d,p}(G)}}{\prod_{i=1}^d[f]_{\textup{U}^{d-1,p_i}(G)}^{\theta_i}} \]
in the case of the first, and
\[ \mathfrak{R}(f) := \frac{\prod_{i=1}^d[f]_{\textup{U}^{d-1,p_i}(G)}^{\theta_i}}{[f]_{\textup{U}^{d,p}(G)}} \]
in the second inequality. Thus the hypothesis in either case is now $\mathfrak{R}(f)\geq1-\varepsilon$. In the course of the proof, we revisit the proof of Lemma \ref{lem:wadjLW} from \cite{BennettTao}.

For $1\leq i\leq d$, define the normalised Loomis--Whitney factor
\[ \Phi_i := \frac{(R_{d-1}f)^{p_i/(d-1)}}{\|R_{d-1}f\|_{\textup{L}^{p_i}(G^{d-1})}^{p_i/(d-1)}} \quad\text{on }G^{d-1}. \]
Thus $\int_{G^{d-1}}\Phi_i^{d-1}\dd\mu^{d-1}=1$. The cube marginal identity \eqref{eq:Rmarg} can be written as
\begin{equation}\label{eq:wstmarg}
\int_G R_df(h_1,\ldots,h_d)\dd\mu(h_i) = \bigl((R_{d-1}f)\circ\pi_i\bigr)^2.
\end{equation}
It follows that $(R_{d-1}f)\circ\pi_i>0$ almost everywhere on $\{R_df>0\}$. On this set put
\[ T_i:=(R_df)\bigl((R_{d-1}f)\circ\pi_i\bigr)^{p_i-2}, \]
and define $T_i:=0$ elsewhere. Fubini's theorem and \eqref{eq:wstmarg} give
\[ \int_{G^d}T_i\dd\mu^d = \|R_{d-1}f\|_{\textup{L}^{p_i}(G^{d-1})}^{p_i}. \]
The ordinary Loomis--Whitney inequality \cite{LoomisWhitney} gives
\begin{equation}\label{eq:wstLW}
0<\int_{\{R_df>0\}}\prod_{i=1}^d(\Phi_i\circ\pi_i)\dd\mu^d
\leq\int_{G^d}\prod_{i=1}^d(\Phi_i\circ\pi_i)\dd\mu^d \leq1.
\end{equation}

The parameter relation in Theorem \ref{thm:diffUdp} is equivalent to
\[ \theta_i p(p_i-2)+\frac{(1-p)p_i}{d-1}=0. \]
Consequently, on $\{R_df>0\}$,
\begin{equation}\label{eq:wstptfact}
(R_df)^p = \Bigl(\prod_{i=1}^dT_i^{\theta_i p}\Bigr) \Bigl(\prod_{i=1}^d \bigl((R_{d-1}f)\circ\pi_i\bigr)^{p_i/(d-1)}\Bigr)^{1-p}.
\end{equation}
The same parameter relation also gives
\begin{equation}\label{eq:wstIexp}
\frac{2\theta_i p}{p_i} = \theta_i p+\frac{1-p}{d-1}.
\end{equation}
The functions
\[ w := \frac{(R_df)^p}{\|R_df\|_{\textup{L}^p(G^d)}^p},\quad u_i:=\frac{T_i}{\|R_{d-1}f\|_{\textup{L}^{p_i}(G^{d-1})}^{p_i}}, \]
and
\[ v := \frac{\1_{\{R_df>0\}}\prod_{i=1}^d(\Phi_i\circ\pi_i)}{\int_{\{R_df>0\}}\prod_{i=1}^d(\Phi_i\circ\pi_i)\dd\mu^d} \]
are nonnegative functions on $G^d$ with integral $1$.

Suppose first that $0<p<1$, and define
\[ \eta := \mathfrak{R}(f)^{2^d p}\Bigl( \int_{\{R_df>0\}}\prod_{i=1}^d(\Phi_i\circ\pi_i)\dd\mu^d \Bigr)^{p-1}. \]
Equations \eqref{eq:wstptfact}--\eqref{eq:wstIexp} give
\[ \prod_{i=1}^du_i^{\theta_i p}v^{1-p} = \eta w. \]
The exponents on the left are positive and sum to $1$, so H\"{o}lder's inequality gives $\eta\leq1$ and we have
\[ \mathfrak{R}(f)^{2^d p} = \eta\Bigl(\int_{\{R_df>0\}}\prod_{i=1}^d(\Phi_i\circ\pi_i)\dd\mu^d\Bigr)^{1-p}. \]
Both factors on the right belong to $[0,1]$. Hence $\mathfrak{R}(f)\geq1-\varepsilon$ implies that $\eta$ and the integral in parentheses are both $1-o^{\varepsilon\to0}_{d,p,\mathbf p,\boldsymbol{\theta}}(1)$. Lemma \ref{lem:stHold} then shows that the $u_i$ and $v$ are pairwise $o^{\varepsilon\to0}_{d,p,\mathbf p,\boldsymbol{\theta}}(1)$ close in $\textup{L}^1(G^d)$. The second conclusion of the same lemma also applies and gives
\begin{equation}\label{eq:wcomdens}
\|w-v\|_{\textup{L}^1(G^d)} + \max_{1\leq i\leq d}\|u_i-v\|_{\textup{L}^1(G^d)} = o^{\varepsilon\to0}_{d,p,\mathbf p,\boldsymbol{\theta}}(1).
\end{equation}

Now suppose that $p>1$, and define instead
\[ \eta := \mathfrak{R}(f)^{2^d p}\Bigl( \int_{\{R_df>0\}}\prod_{i=1}^d(\Phi_i\circ\pi_i)\dd\mu^d \Bigr)^{1-p}. \]
Since $\theta_1 p>1$ and $p-1-p\sum_{i=2}^d\theta_i=\theta_1 p-1$, with $w,u_i,v$ as above, \eqref{eq:wstptfact} is equivalent to
\[ w^{1/(\theta_1 p)} \prod_{i=2}^d u_i^{-\theta_i/\theta_1} v^{(p-1)/(\theta_1 p)} = \eta^{1/(\theta_1 p)} u_1. \]
The exponents on the left are positive and sum to $1$. The integral of the left-hand side is $\eta^{1/(\theta_1 p)}$. H\"{o}lder's inequality gives $\eta\leq1$, and we have the factorisation
\[ \mathfrak{R}(f)^{2^d p} = \eta\Bigl( \int_{\{R_df>0\}}\prod_{i=1}^d(\Phi_i\circ\pi_i)\dd\mu^d \Bigr)^{p-1}. \]
Thus $\eta$ and the integral in parentheses are again $1-o^{\varepsilon\to0}_{d,p,\mathbf p,\boldsymbol{\theta}}(1)$. Lemma \ref{lem:stHold} first makes $w,u_2,\ldots,u_d,v$ pairwise $o^{\varepsilon\to0}_{d,p,\mathbf p,\boldsymbol{\theta}}(1)$-close in $\textup{L}^1(G^d)$. Its second conclusion applies to the left-hand side of the preceding normalised identity, which equals $\eta^{1/(\theta_1 p)}u_1$. Since $\eta^{1/(\theta_1 p)}=1-o^{\varepsilon\to0}_{d,p,\mathbf p, \boldsymbol{\theta}}(1)$, it gives the same conclusion for $u_1$. We have therefore proved \eqref{eq:wcomdens} again.
In both cases we also have
\begin{equation}\label{eq:wstnear}
\int_{\{R_df>0\}}\prod_{i=1}^d(\Phi_i\circ\pi_i)\dd\mu^d = 1-o^{\varepsilon\to0}_{d,p,\mathbf p,\boldsymbol{\theta}}(1).
\end{equation}

On the copy of $G^{d-1}$ obtained by deleting coordinate $i$, use the factor $\Phi_i$ defined above. By \eqref{eq:wstLW} and \eqref{eq:wstnear},
\[ \int_{G^d}\prod_{i=1}^d (\Phi_i\circ\pi_i) \dd\mu^d = 1-o^{\varepsilon\to0}_{d,p,\mathbf p,\boldsymbol{\theta}}(1). \]
Lemma \ref{lem:LWfact} therefore supplies measurable functions $r_1,\ldots,r_d\colon G\to[0,\infty)$ of integral $1$ such that
\begin{equation}\label{eq:wHtens}
\Bigl\|\frac{(R_{d-1}f)^{p_i}}{\|R_{d-1}f\|_{\textup{L}^{p_i}(G^{d-1})}^{p_i}}-\bigotimes_{j\ne i}r_j\Bigr\|_{\textup{L}^1(G^{d-1})} = o^{\varepsilon\to0}_{d,p,\mathbf p,\boldsymbol{\theta}}(1) \quad\text{for } 1\leq i\leq d.
\end{equation}
This is the normalised form of the second conclusion in Theorem \ref{thm:diffUdp}(b).

We argue that the same factors also approximate $(R_d f)^p$. The elementary inequality $|s^{1/(d-1)}-t^{1/(d-1)}|^{d-1}\leq|s-t|$ for $s,t\geq0$, together with \eqref{eq:wHtens}, gives
\[ \Bigl\|\Phi_i - \Bigl(\bigotimes_{j\ne i} r_j\Bigr)^{1/(d-1)} \Bigr\|_{\textup{L}^{d-1}(G^{d-1})} = o^{\varepsilon\to0}_{d,p,\mathbf p,\boldsymbol{\theta}}(1). \]
Telescoping the products and applying the ordinary Loomis--Whitney inequality to every mixed term now gives
\begin{equation}\label{eq:wfullprod}
\Bigl\| \prod_{i=1}^d(\Phi_i\circ\pi_i) - \bigotimes_{j=1}^d r_j \Bigr\|_{\textup{L}^1(G^d)} = o^{\varepsilon\to0}_{d,p,\mathbf p,\boldsymbol{\theta}}(1).
\end{equation}
All factors appearing without a difference have $\textup{L}^{d-1}(G^{d-1})$-norm one.
Finally, \eqref{eq:wstLW} gives
\[ \Bigl\|v-\prod_{i=1}^d(\Phi_i\circ\pi_i)\Bigr\|_{\textup{L}^1(G^d)}
\leq 2\Bigl(1-\int_{\{R_df>0\}}\prod_{i=1}^d(\Phi_i\circ\pi_i)\dd\mu^d\Bigr)
= o^{\varepsilon\to0}_{d,p,\mathbf p,\boldsymbol{\theta}}(1). \]
Combining this estimate with \eqref{eq:wcomdens} and \eqref{eq:wfullprod} yields
\[ \Bigl\| \frac{(R_df)^p}{\|R_df\|_{\textup{L}^p(G^d)}^p} - \bigotimes_{j=1}^d r_j \Bigr\|_{\textup{L}^1(G^d)}
= o^{\varepsilon\to0}_{d,p,\mathbf p,\boldsymbol{\theta}}(1), \]
which is the normalised form of the first conclusion in Theorem \ref{thm:diffUdp}(b).
\end{proof}


\section{Improved log-convexity of Gowers norms}
\label{sec:log-convexity}

The first auxiliary result in this section informally says that a nonnegative function that has three of its Lebesgue (quasi)norms close to $1$ necessarily needs to be close to an indicator of a set of measure close to $1$.
Only two Lebesgue norms clearly would not be enough on the real line $\R$. Since there we have two degrees of freedom that scale a generic function vertically and horizontally (adjusting its ``height'' and ``width''), we can make these two Lebesgue norms exactly $1$.

\begin{lemma}\label{lem:momrig}
Fix $0<a<b<c<\infty$. Let $W\colon X\to[0,\infty)$ be a measurable function on a $\sigma$-finite measure space $(X,\nu)$ and suppose that
\begin{equation}\label{eq:threemom}
\max \biggl\{ \Big|\int_X W^a\dd\nu-1\Big|, \Big|\int_X W^b\dd\nu-1\Big|, \Big|\int_X W^c\dd\nu-1\Big| \biggr\} \leq\varepsilon
\end{equation}
for some $\varepsilon>0$.
Then there is a measurable set $E\subseteq X$ such that 
\begin{equation}\label{eq:momz}
\nu(E)=1+o^{\varepsilon\to0}_{a,b,c}(1)
\end{equation}
and
\begin{equation}\label{eq:momind}
\int_X|W-\1_E|^t\dd\nu = o^{\varepsilon\to0}_{a,b,c,t}(1)
\end{equation}
for every fixed $t$ in the interval $[a,c]$.
\end{lemma}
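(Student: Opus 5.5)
The plan is to exploit the equality case of log-convexity of moments. Write $b=\lambda a+(1-\lambda)c$ with $\lambda=(c-b)/(c-a)\in(0,1)$. H\"older's inequality gives
\[ \int_X W^b\dd\nu \leq \Bigl(\int_X W^a\dd\nu\Bigr)^{\lambda}\Bigl(\int_X W^c\dd\nu\Bigr)^{1-\lambda}, \]
and equality holds only when $W^a$ and $W^c$ are proportional on $\{W>0\}$, i.e.\ when $W$ is constant on its support. The hypothesis \eqref{eq:threemom} says this H\"older inequality is nearly sharp, and the moment values then pin the constant to $1$ and the support to measure $1$. I would avoid the compactness argument and instead use an explicit elementary pointwise inequality, which gives an effective rate.

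The key step is to consider the function
\[ \phi(s):=\lambda s^a+(1-\lambda)s^c-s^b \qquad (s\geq0). \]
By the weighted AM--GM inequality, $\phi\geq0$, with equality exactly at $s=0$ and $s=1$. Integrating against $\nu$ and using \eqref{eq:threemom} gives
\[ \int_X\phi(W)\dd\nu\leq 2\varepsilon. \]
Now fix a small $\delta\in(0,1/2)$ and split $X$ into three sets: $E:=\{|W-1|\leq\delta\}$, $S:=\{W<\delta\}$, and the rest $B:=\{W\geq\delta,\ |W-1|>\delta\}$.

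On $B$, the function $\phi(s)/s^a$ is bounded below by some $c_\delta>0$: it is continuous and positive on $[\delta,1-\delta]\cup[1+\delta,\infty)$ and tends to $\infty$ as $s\to\infty$. Hence $\int_B W^a\leq 2\varepsilon/c_\delta$, and likewise $\int_B W^c\lesssim_\delta\varepsilon$, since $\phi(s)\gtrsim s^c$ for large $s$. On $S$ there is a pure smallness bound $\int_S W^t\leq\delta^{t-a}\int_X W^a\leq 2\delta^{t-a}$ for $t>a$. The set $E$ satisfies $\nu(E)\leq(1-\delta)^{-a}\int_X W^a<\infty$.

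For \eqref{eq:momz}, compare the $a$- and $c$-moments. We have $\int_E W^t=\nu(E)(1+O(\delta))$ for each fixed $t$, and $\int_B W^t=O_\delta(\varepsilon)$ for $t\in\{a,b,c\}$, since $W^b\leq W^a+W^c$. The main obstacle is the set $S$ for the smallest exponent $a$: the term $\int_S W^a$ need not be small a priori. To control it, use $b$ and $c$, on which $S$ contributes only $O(\delta^{b-a})$. This gives $\int_X W^c=\nu(E)(1+O(\delta))+O_\delta(\varepsilon)$, so $\nu(E)=1+O(\delta)+O_\delta(\varepsilon)$. Feeding this into the $a$-moment identity yields $\int_S W^a=O(\delta)+O_\delta(\varepsilon)$. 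Choosing $\delta=\delta(\varepsilon)\to0$ slowly then gives \eqref{eq:momz}.

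Finally, for $t\in[a,c]$, bound $\int_X|W-\1_E|^t$ by three pieces:
\begin{itemize}
\item on $E$, the integrand is at most $\delta^t\nu(E)$;
\item on $S$, it is at most $\int_S W^t\leq\int_S W^a$;
\item on $B$, it is $\int_B W^t\leq\int_B(W^a+W^c)$.
\end{itemize}
Each of these is $o^{\varepsilon\to0}_{a,b,c,t}(1)$, which proves \eqref{eq:momind}.
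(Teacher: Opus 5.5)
Your proof is correct, but it takes a different route from the paper's. The paper normalises $u=W^a/\int_X W^a\dd\nu$ and $v=W^c/\int_X W^c\dd\nu$ and notes that H\"older's inequality for $\int_X u^{\vartheta}v^{1-\vartheta}\dd\nu$ is nearly sharp. It then uses the Eisner--Tao stability lemma (Lemma~\ref{lem:stHold}) as a black box to obtain $\delta:=\int_X|W^a-W^c|\dd\nu=o^{\varepsilon\to0}_{a,b,c}(1)$. Everything after that is pointwise comparison with $|W^a-W^c|$ on the \emph{fixed} set $E=\{W>1/2\}$, split according to whether $|W-1|<\delta$.

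You instead integrate the pointwise AM--GM deficit $\phi(W)$, which bounds the H\"older defect directly by $2\varepsilon$ without any stability lemma. You then take the $\delta$-dependent set $E=\{|W-1|\leq\delta\}$ with a diagonal choice $\delta=\delta(\varepsilon)\to0$. Your route is self-contained and effective: since $\phi''(1)=\lambda(1-\lambda)(c-a)^2>0$, you may take $c_\delta\gtrsim_{a,b,c}\delta^2$, and optimising in $\delta$ gives power-type bounds in $\varepsilon$. The paper's argument, by contrast, inherits the qualitative $o(1)$ of Lemma~\ref{lem:stHold}. The paper's route stays within the single framework of near-extremisers of H\"older's inequality that it reuses in Lemma~\ref{lem:LWfact} and Theorem~\ref{thm:diffUdp}(b), and one quantity $\delta$ drives all its estimates.

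Two small remarks. First, the set $S$ is not really an obstacle. The function $\phi(s)/s^a=\lambda+(1-\lambda)s^{c-a}-s^{b-a}$ extends continuously to $s=0$ with value $\lambda>0$, and it is positive for $s\neq1$ by AM--GM applied to $1$ and $s^{c-a}$. So your constant $c_\delta$ works on all of $[0,1-\delta]\cup[1+\delta,\infty)$, and $S$ can simply be merged into $B$. Second, if you keep $S$ separate, its contribution to the $c$-moment is $O(\delta^{c-a})$, which is not $O(\delta)$ when $c-a<1$. Your estimate should then read $\nu(E)=1+O(\delta^{\min\{1,c-a\}})+O_\delta(\varepsilon)$, which is harmless since $\delta\to0$.
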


\begin{proof}
Denote $\vartheta=(c-b)/(c-a)$. Let $0<\varepsilon<1$ be arbitrary.
Since $b=\vartheta a+(1-\vartheta)c$, put
\[ u:=\frac{W^a}{\int_XW^a\dd\nu}, \quad v:=\frac{W^c}{\int_XW^c\dd\nu}. \]
The functions $u,v$ are nonnegative,
\[ \int_X u \dd\nu = \int_X v \dd\nu = 1, \]
and \eqref{eq:threemom} gives
\[ \int_X u^{\vartheta} v^{1-\vartheta} \dd\nu
= \frac{\int_X W^b \dd\nu}{(\int_X W^a \dd\nu)^{\vartheta}(\int_X W^c \dd\nu)^{1-\vartheta}} 
= 1-O_{a,b,c}^{\varepsilon\to0}(\varepsilon). \]
Lemma \ref{lem:stHold} therefore yields
\[ \int_X |u-v| \dd\nu = o^{\varepsilon\to0}_{a,b,c}(1) \]
and, by
\begin{equation}\label{eq:mom0}
\int_X W^a \dd\nu = 1+O_{a,b,c}^{\varepsilon\to0}(\varepsilon), \quad \int_X W^c \dd\nu = 1+O_{a,b,c}^{\varepsilon\to0}(\varepsilon),
\end{equation}
we also have
\begin{equation}\label{eq:momends}
\delta := \int_X |W^a-W^c| \dd\nu = o^{\varepsilon\to0}_{a,b,c}(1).
\end{equation}
We assume that $\varepsilon>0$ is sufficiently small that $0\leq\delta<1/2$. Also, the claim is now nontrivial only when $\delta>0$.

Define
\[ E := \Big\{x\in X : W(x)>\frac{1}{2} \Big\} \]
and fix $t\in[a,c]$. Also put 
\[ E' := \{x\in E : |W(x)-1| < \delta \}. \]
For $x\in X\setminus E$ we have
\[ W(x)^t \leq W(x)^a \lesssim_{a,c} W(x)^a - W(x)^c, \]
so
\begin{equation}\label{eq:mom_aux1}
\int_{X\setminus E} W^t \dd\nu \lesssim_{a,c,t} \delta.
\end{equation}
Also note that on $E$ we have $W\gtrsim_a 1$, so 
\[ \nu(E) \lesssim_a \int_X W^a \dd\nu \lesssim 1, \]
and, consequently,
\begin{equation}\label{eq:mom_aux2}
\int_{E'} |W-1|^t \dd\nu \leq \delta^t\nu(E) \lesssim_a \delta^t.
\end{equation}
Finally, for $x\in E\setminus E'$ we have
\[ |W(x)-1|^t \lesssim_{a,c,t} \delta^{-\max\{1-t,0\}} |W(x)^a - W(x)^c|, \]
which gives
\begin{equation}\label{eq:mom_aux3}
\int_{E\setminus E'} |W-1|^t \dd\nu \lesssim_{a,c,t} \delta^{\min\{t,1\}}.
\end{equation}
Now, \eqref{eq:momends} together with a combination of \eqref{eq:mom_aux1}, \eqref{eq:mom_aux2} and \eqref{eq:mom_aux3} implies \eqref{eq:momind}.

Next, for $x\in E$ we have
\[ |W(x)^c-1| \lesssim_{a,c} |W(x)^a-W(x)^c|, \]
so
\[ \Big| \int_{E} W^c \dd\nu - \nu(E) \Big| \lesssim_{a,c} \delta, \]
which, combined with \eqref{eq:mom0} and \eqref{eq:mom_aux1} for $t=c$, gives \eqref{eq:momz}.
\end{proof}

Near-extremisers of the Loomis--Whitney inequality in the class of measurable sets are characterised by the following lemma.
It is a specialisation of Lemma \ref{lem:LWfact} and is also folklore; see, e.g., \cite[Cor.\,3]{EFKY16} for a discrete version of this result.

\begin{lemma}\label{lem:cylLWst}
Fix $d\geq2$. Let $(X,\nu)$ be a $\sigma$-finite measure space, let $E_i\subseteq X^{d-1}$ have finite positive product measure $\nu^{d-1}(E_i)$, and put
\[ \Omega:=\bigcap_{i=1}^d\pi_i^{-1}(E_i). \]
If
\[ \nu^d(\Omega) \geq (1-\varepsilon) \Bigl(\prod_{i=1}^d\nu^{d-1}(E_i)\Bigr)^{1/(d-1)} \]
for some $0<\varepsilon<1$, then there are sets $A_i\subseteq X$ of finite positive measure $\nu$ such that
\[ \nu^d\bigl(\Omega\mathbin{\triangle}(A_1\times\cdots\times A_d)\bigr) = o^{\varepsilon\to0}_d(1)\,\nu^d(\Omega). \]
\end{lemma}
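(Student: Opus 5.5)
The plan is to specialise Lemma \ref{lem:LWfact} to indicator functions and then upgrade the resulting approximate tensor factorisation to one by indicators of sets. Put $m_i:=\nu^{d-1}(E_i)$ and define
\[ F_i := m_i^{-1/(d-1)}\1_{E_i}, \]
so that $\int_{X^{d-1}} F_i^{d-1}\dd\nu^{d-1}=1$. Also
\[ \int_{X^d}\prod_i F_i\circ\pi_i \dd\nu^d = \frac{\nu^d(\Omega)}{(\prod_i m_i)^{1/(d-1)}} \geq 1-\varepsilon. \]
Lemma \ref{lem:LWfact} then supplies probability densities $r_1,\ldots,r_d$ with
\[ \Bigl\| m_i^{-1}\1_{E_i} - \bigotimes_{j\neq i} r_j\Bigr\|_{\textup{L}^1(X^{d-1})} = o^{\varepsilon\to0}_d(1). \]
Telescoping exactly as in the derivation of \eqref{eq:wfullprod} (ordinary Loomis--Whitney on each mixed term) gives
\[ \Bigl\| (\textstyle\prod_i m_i)^{-1/(d-1)}\1_\Omega - r_1\otimes\cdots\otimes r_d\Bigr\|_{\textup{L}^1(X^d)} = o(1), \]
using also that $\prod_i F_i\circ\pi_i$ is a constant multiple of $\1_\Omega$ and has integral $1-o(1)$.

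The main step is to replace each $r_j$ by a multiple of an indicator. Set $W:=r_1\otimes\cdots\otimes r_d$ and $\lambda:=(\prod_i m_i)^{-1/(d-1)}$. Then $W$ is $\textup{L}^1$-close to $\lambda\1_\Omega$, a function taking only the values $0$ and $\lambda$, and $\lambda\nu^d(\Omega)=1-o(1)$. I would normalise by $\lambda$ and use a level-set argument. For fixed $x_2,\ldots,x_d$ chosen generically (a Fubini/averaging choice weighted by $r_2\otimes\cdots\otimes r_d$), the function $x_1\mapsto r_1(x_1)\prod_{j\ge2}r_j(x_j)$ is $\textup{L}^1$-close to a function with values in $\{0,\lambda\}$. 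A product $a\cdot b$ being close to two-valued for most pairs forces each factor to concentrate near one value on its support. Concretely, I would take $A_j:=\{r_j>\tfrac12 c_j\}$ for a suitable typical level $c_j$, where $c_j$ is a median of $r_j$ under the measure $r_j\dd\nu$, and show that $r_j$ is $\textup{L}^1$-close to $c_j\1_{A_j}$ with $\prod_j c_j$ close to $\lambda$.

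An alternative route for this step is Lemma \ref{lem:momrig}. Applied to $W/\lambda$ rescaled, it would need three moments near $1$, but only $\textup{L}^1$-closeness to an indicator is available, so a direct argument is cleaner. The basic input is the observation that if $\int|ab-\1_S|$ is small on a product space $Y\times Z$ with densities, then $a$ is nearly constant on its essential support. To see this, compare values $a(y)b(z)$ and $a(y')b(z)$: both must lie near $\{0,1\}$ for most $z$ with $b(z)>0$, which pins down $a(y)/a(y')$. Applying this inductively, splitting off one coordinate at a time, gives $W/\lambda\approx\1_{A_1\times\cdots\times A_d}$ in $\textup{L}^1$. Combining with the previous display then yields
\[ \nu^d\bigl(\Omega\mathbin{\triangle}(A_1\times\cdots\times A_d)\bigr)=o(1)\nu^d(\Omega). \]

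Finally, the $A_i$ must have finite positive measure. Positivity follows because the product of their measures is $\nu^d(\Omega)(1+o(1))>0$ for small $\varepsilon$. Finiteness follows from $\nu(A_j)\le 2/c_j$, which holds since $r_j$ has integral $1$.

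I expect the main obstacle to be this rigidity step: showing quantitatively, with errors depending only on $d$, that an approximately two-valued tensor product of densities forces each factor to be approximately a scaled indicator.
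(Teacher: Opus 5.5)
Your proposal is correct and follows the route the paper indicates (the paper omits the details): apply Lemma~\ref{lem:LWfact} to $F_i=\nu^{d-1}(E_i)^{-1/(d-1)}\1_{E_i}$, telescope to get $r_1\otimes\cdots\otimes r_d\approx\lambda\1_\Omega$ in $\textup{L}^1$, and then upgrade the densities $r_j$ to scaled indicators. The step you flag as the main obstacle is actually immediate: by Fubini, for $x'=(x_2,\ldots,x_d)$ typical with respect to $r_2\otimes\cdots\otimes r_d\dd\nu^{d-1}$ one has $\bigl\|r_1-\lambda\bigl(\prod_{j\geq2}r_j(x_j)\bigr)^{-1}\1_{\Omega_{x'}}\bigr\|_{\textup{L}^1(X)}=o^{\varepsilon\to0}_d(1)$, where $\Omega_{x'}$ is the slice of $\Omega$ at $x'$, and the same argument applies to each coordinate.
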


In any case, Lemma \ref{lem:cylLWst} can be easily deduced from Lemma \ref{lem:LWfact} by applying it to the functions
\[ F_i := \frac{\1_{E_i}}{\nu^{d-1}(E_i)^{1/(d-1)}}, \] 
so we omit its detailed proof. 

We will also need a diagonal estimate for the two-dimensional autocorrelation $R_2 f$.

\begin{lemma}\label{lem:diagleak}
Let a function $f\in\textup{L}^1(G)$ be nonnegative and such that $\|f\|_{\textup{U}^3(G)}<\infty$. Then, for every measurable $A\subseteq G$,
\begin{align}
\iint\limits_{\{(h_1,h_2)\in G^2:h_1+h_2\in A\}}\!\!\!\!R_2 f(h_1,h_2)\dd\mu(h_1)\dd\mu(h_2) 
& \leq \|R_1 f\|_{\textup{L}^1(A)}^{1/2} [f]_{\textup{U}^{1,3}(G)}^{3} \label{eq:diagleak0} \\
& \leq \|R_1 f\|_{\textup{L}^1(A)}^{1/2} \|f\|_{\textup{L}^1(G)} \|f\|_{\textup{U}^3(G)}^2. \label{eq:diagleak}
\end{align}
\end{lemma}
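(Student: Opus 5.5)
The plan is to prove \eqref{eq:diagleak0} directly, by a change of variables followed by two applications of the Cauchy--Schwarz inequality. The second inequality \eqref{eq:diagleak} should then be a special case of the reverse degree-lowering inequality in Theorem \ref{thm:diffUdp}(a).

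\textbf{Step 1: rewrite the left-hand side.} Substitute $k=h_1+h_2$, so $h_2=k-h_1$; Haar measure is invariant, so $\dd\mu(h_2)=\dd\mu(k)$ for fixed $h_1$. The left-hand side becomes
\[ \int_A\int_G f(x)f(x+k)\Bigl(\int_G f(x+h_1)f(x+k-h_1)\dd\mu(h_1)\Bigr)\dd\mu(x)\dd\mu(k). \]
After the substitution $z=x+h_1$, the inner integral is $(f\ast f)(2x+k)$. The diagonal pair $f(x)f(x+k)$ is exactly what integrates to $R_1f(k)$. I therefore apply Cauchy--Schwarz in $(x,k)\in G\times A$ with weight $f(x)f(x+k)$. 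This bounds the left-hand side by $\|R_1 f\|_{\textup{L}^1(A)}^{1/2}$ times
\[ \Bigl(\int_{G^2} f(x)f(x+k)\,(f\ast f)(2x+k)^2\dd\mu(x)\dd\mu(k)\Bigr)^{1/2}. \]
In that second factor I enlarge $k\in A$ to $k\in G$.

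\textbf{Step 2: the convolution cube.} With $y=x+k$, the second factor squared equals
\[ \int_{G^2} f(x)f(y)(f\ast f)(x+y)^2\dd\mu(x)\dd\mu(y) = \int_G (f\ast f)^3\dd\mu. \]
This is a cubic moment of a \emph{convolution}, whereas $[f]_{\textup{U}^{1,3}(G)}^6=\int_G(R_1f)^3\dd\mu$ is the cubic moment of a \emph{correlation}. Comparing the two is the main obstacle, and I handle it as follows. Expand
\[ (f\ast f)(x+y)^2=\iint f(x+t)f(x+t')f(y-t)f(y-t')\dd\mu(t)\dd\mu(t'). \]
Setting $Q(t,t'):=\int_G f(a)f(a+t)f(a+t')\dd\mu(a)$, this gives
\[ \int_G (f\ast f)^3\dd\mu=\iint Q(t,t')\,Q(-t,-t')\dd\mu(t)\dd\mu(t') \leq \|Q\|_{\textup{L}^2(G^2)}^2 \]
by Cauchy--Schwarz. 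Expanding $\|Q\|_{\textup{L}^2}^2$ with the second copy of $a$ written as $a+h$, the integrand factors into three copies of $f(\cdot)f(\cdot+h)$, so that
\[ \|Q\|_{\textup{L}^2(G^2)}^2=\int_G (R_1f)^3\dd\mu=[f]_{\textup{U}^{1,3}(G)}^6. \]
Taking square roots gives \eqref{eq:diagleak0}. Fubini is justified throughout because every integrand is nonnegative.

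\textbf{Step 3: the second inequality.} I apply the second inequality of Theorem \ref{thm:diffUdp}(a) with $d=2$, $p=2$, $p_1=3$, $p_2=1$, $\theta_1=3/2$, $\theta_2=-1/2$. The parameter relation checks out: $\tfrac12=\tfrac32(1-\tfrac23)=-\tfrac12(1-2)$, and $\theta_1+\theta_2=1$. It gives
\[ [f]_{\textup{U}^{2,2}(G)}\geq[f]_{\textup{U}^{1,3}(G)}^{3/2}[f]_{\textup{U}^{1,1}(G)}^{-1/2}. \]
By \eqref{eq:RL12}, $[f]_{\textup{U}^{2,2}}=\|f\|_{\textup{U}^3}$, and $[f]_{\textup{U}^{1,1}}=\|R_1f\|_{\textup{L}^1}^{1/2}=\|f\|_{\textup{L}^1}$. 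Rearranging yields $[f]_{\textup{U}^{1,3}}^3\leq\|f\|_{\textup{L}^1}\|f\|_{\textup{U}^3}^2$, which is \eqref{eq:diagleak}.

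The theorem needs the involved functionals to be finite and $f\neq0$. Finiteness follows from the hypotheses $f\in\textup{L}^1$ and $\|f\|_{\textup{U}^3}<\infty$, together with \eqref{eq:diagleak0}'s intermediate bound; the case $f=0$ a.e.\ is trivial.
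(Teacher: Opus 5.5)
Your proposal is correct and is essentially the paper's proof. It uses the same substitution and Cauchy--Schwarz with weight $f(x)f(x+k)$ (the paper writes this in the variables $x$ and $y=x+k$) and the same Cauchy--Schwarz on $\iint Q(t,t')Q(-t,-t')$ to get $\int_G(f\ast f)^3\dd\mu\leq\int_G(R_1f)^3\dd\mu$. It then applies the same reverse degree-lowering inequality with $d=p=2$, $(\theta_1,\theta_2)=(3/2,-1/2)$ and $(p_1,p_2)=(3,1)$.

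One small correction concerns the finiteness of $[f]_{\textup{U}^{1,3}(G)}$, which is needed to invoke that inequality. It does not follow from the intermediate bound, since that bound points the wrong way. Instead, apply Step~3 to the truncations $\min\{f,M\}\1_K$ with $K$ compact, for which everything is finite, and pass to the limit $M\to\infty$, $K\uparrow G$ by monotone convergence. The paper leaves this point implicit as well.
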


\begin{proof}
The integral on the left-hand side of \eqref{eq:diagleak0} unfolds as
\begin{align*}
& \int_A \int_G R_2f(h,s-h) \dd\mu(h) \dd\mu(s) \\
& = \int_{G^3} f(x) f(x+h) f(x+s-h) f(x+s) \1_A(s) \dd\mu(x) \dd\mu(h) \dd\mu(s)
\end{align*}
and the substitution $y=x+s$ transforms it into
\begin{align*}
& \int_{G^3} f(x) f(x+h) f(y-h) f(y) \1_A(y-x) \dd\mu(y) \dd\mu(x) \dd\mu(h) \\
& = \int_{G^2} f(x) f(y) (f\ast f)(x+y) \1_A(y-x) \dd\mu(x) \dd\mu(y).
\end{align*}
Using the Cauchy--Schwarz inequality, substituting back $s=y-x$ in the first factor, and introducing $t=x+y$ in the second one, we then bound the last expression as
\begin{align*}
& \leq \Bigl( \int_{G^2} f(x) f(y) \1_A(y-x) \dd\mu(x) \dd\mu(y) \Bigr)^{1/2} \\
& \quad\ \Bigl( \int_{G^2} f(x) f(y) (f\ast f)(x+y)^2 \dd\mu(x) \dd\mu(y) \Bigr)^{1/2} \\
& = \Bigl( \int_A \int_G f(x) f(x+s) \dd\mu(x) \dd\mu(s) \Bigr)^{1/2} \\
& \quad\ \biggl( \int_G (f\ast f)(t)^2 \Bigl( \int_G f(x) f(t-x) \dd\mu(x) \Bigr) \dd\mu(t) \biggr)^{1/2} \\
& = \bigl\| f\ast\widetilde{f} \bigr\|_{\textup{L}^1(A)}^{1/2} \|f\ast f\|_{\textup{L}^3(G)}^{3/2}.
\end{align*}

It remains to prove
\begin{equation}\label{eq:phaserem}
\int_G (f\ast f)^3 \dd\mu \leq \int_G \bigl(f\ast\widetilde{f}\,\bigr)^3\dd\mu,
\end{equation}
since then we can recall $R_1 f = f\ast\widetilde{f}$ and the definition of $[\,\cdot\,]_{\textup{U}^{1,3}(G)}$.
The left-hand side of \eqref{eq:phaserem} can be expanded as
\[ \int_{G^4} f(s-x) f(x) f(s-y) f(y) f(s-z) f(z) \dd\mu(x) \dd\mu(y) \dd\mu(z) \dd\mu(s), \]
which, after substitutions $x'=s-x$, $u=y-x$, $v=z-x$, becomes
\begin{align*}
\int_{G^2} & \Bigl( \int_G f(x) f(x+u) f(x+v) \dd\mu(x) \Bigr) \\
& \Bigl( \int_G f(x') f(x'-u) f(x'-v) \dd\mu(x') \Bigr) \dd\mu(u) \dd\mu(v).
\end{align*}
Yet another application of the Cauchy--Schwarz inequality bounds this by
\begin{align*}
& \biggl( \int_{G^2} \Bigl( \int_G f(x) f(x+u) f(x+v) \dd\mu(x) \Bigr)^2 \dd\mu(u) \dd\mu(v) \biggr)^{1/2} \\
& \biggl( \int_{G^2} \Bigl( \int_G f(x') f(x'-u) f(x'-v) \dd\mu(x') \Bigr)^2 \dd\mu(u) \dd\mu(v) \biggr)^{1/2},
\end{align*}
which easily simplifies to $\int_G (f\ast\widetilde{f}\,)^3\dd\mu$. This establishes \eqref{eq:phaserem} and thus also \eqref{eq:diagleak0}.

Finally, the reverse inequality in Theorem \ref{thm:diffUdp}(a), with
\[ d=2,\quad p=2,\quad \theta_1=3/2, \quad \theta_2=-1/2,\quad p_1=3,\quad p_2=1, \]
gives
\[ [f]_{\textup{U}^{2,2}(G)} \geq [f]_{\textup{U}^{1,3}(G)}^{3/2} [f]_{\textup{U}^{1,1}(G)}^{-1/2}, \]
i.e.,
\[ [f]_{\textup{U}^{1,3}(G)}^3 \leq \|f\|_{\textup{U}^1(G)}\|f\|_{\textup{U}^3(G)}^2
= \|f\|_{\textup{L}^1(G)} \|f\|_{\textup{U}^3(G)}^2 . \]
Combining that with \eqref{eq:diagleak0} proves \eqref{eq:diagleak}.
\end{proof}

Yet another ingredient that we need is a classical improvement to the unitary Young's convolution inequality, due to Fournier \cite[Thm.\,1]{Fournier}. We only need its particular case on the convolution mapping $\textup{L}^{4/3}(G)\ast \textup{L}^{4/3}(G)\to\textup{L}^2(G)$. In fact, any triple of indices in the interior of Young's exponent range would suffice for our purpose.

\begin{lemma}[from {\cite{Fournier}}]\label{lem:unifYgap}
There exists an absolute constant $\eta_{\mathrm{Young}}>0$ such that
\[ \|u\ast v\|_{\textup{L}^2(G)}
\leq(1-\eta_{\mathrm{Young}}) \|u\|_{\textup{L}^{4/3}(G)} \|v\|_{\textup{L}^{4/3}(G)} \]
for every LCA (or even just unimodular) group $G$ with no compact open subgroup and all $u,v\in\textup{L}^{4/3}(G)$. The constant is independent of $G$ and of the normalisation of its Haar measure.
\end{lemma}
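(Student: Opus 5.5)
The plan is to run the standard Hölder-factorisation proof of Young's inequality, show that near-equality forces the functions to be approximately indicators of sets with little additive growth, and then rule this out with a Kemperman-type sumset inequality. Write $\|u\ast v\|_{\textup{L}^2(G)}=\sup\bigl|\int_{G^2}u(x)v(y)w(x+y)\dd\mu(x)\dd\mu(y)\bigr|$ over $\|w\|_{\textup{L}^2(G)}=1$. Replacing $u,v,w$ by $|u|,|v|,|w|$ reduces everything to nonnegative functions. By homogeneity I normalise $\|u\|_{4/3}=\|v\|_{4/3}=\|w\|_2=1$.

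Next I write the integrand, with $z=x+y$, as a product of three factors,
\[ \bigl(u(x)^{4/3}v(y)^{4/3}\bigr)^{1/4}\bigl(u(x)^{4/3}w(z)^{2}\bigr)^{1/4}\bigl(v(y)^{4/3}w(z)^2\bigr)^{1/4}, \]
and apply Hölder's inequality with exponents $(4,4,4)$. Each of the three resulting integrals equals $1$ by Fubini and unimodularity, which recovers the unitary Young inequality. Now suppose the trilinear form is at least $1-\varepsilon$. Lemma \ref{lem:stHold} then says the three normalised densities $u^{4/3}\otimes v^{4/3}$, $(u^{4/3}\otimes w^2)\circ\tau$ and $(v^{4/3}\otimes w^2)\circ\tau'$ (the affine change of variables is absorbed in $\tau,\tau'$) are $o^{\varepsilon\to0}(1)$-close in $\textup{L}^1(G^2)$.

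The identity $u(x)^{4/3}v(y)^{4/3}\approx u(x)^{4/3}w(x+y)^2$ should force all three functions to be almost constant on their supports. Concretely, I expect near-equality to force the following, with $o(1)$ errors in the relevant norms:
\begin{enumerate}[(i)]
\item $v^{4/3}\approx w^2(\cdot+x)$ for $u^{4/3}$-most $x$;
\item $u^{4/3}\approx w^{2}(\cdot+y)$ for $v^{4/3}$-most $y$.
\end{enumerate}
Such approximate translation invariance of the distributions of $u$ and $v$ should yield, via a level-set or layer-cake argument, a single level set $A=\{u>\lambda\}$ of finite positive measure carrying most of the mass of $u$, and similarly $B$ for $v$ and $C$ for $w$. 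These should satisfy $\mu(A)\approx\mu(B)\approx\mu(C)$, and should be \emph{almost additively closed}: $\mu\bigl((A+B)\setminus C\bigr)=o(1)\mu(C)$ after discarding negligible parts.

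The final step, and the main obstacle, is to contradict this with a bound that is uniform in $G$. I would use Kemperman's inequality for unimodular groups without compact open subgroups, $\mu_*(A+B)\geq\min\{\mu(A)+\mu(B),\mu(G)\}$. If $G$ is noncompact this gives roughly $\mu(C)\geq 2\mu(A)-o(\mu(A))$, which is a contradiction. If $G$ is compact but not open-subgroup-bearing (so connected enough), the near-extremiser must have $\mu(A)\approx\mu(G)$, and I would treat that case separately by noting that constants are not extremisers when $\mu(G)<\infty$ beyond the trivial normalisation.

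The delicate part is that the above must be made robust: Kemperman's inequality has to be applied to sets that are only approximately closed under addition. For this I would pass to the popular sumset $\{z:1_A\ast1_B(z)>\delta\}$ and use a quantitative stability version of Kemperman (of Christ or Tao type) in which all constants are independent of $G$. This yields a gap $\eta_{\mathrm{Young}}>0$ depending on nothing. Fournier's original route, which first proves the gap for indicators and then extends it by interpolation along level sets, would be my fallback if the stability argument proves too technical.
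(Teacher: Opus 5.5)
The paper gives no proof of this lemma. It quotes Fournier's Theorem~1, whose constant depends only on the exponents, so your stated fallback is exactly the paper's route.

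Your own route has a genuine gap at its centre. After Lemma~\ref{lem:stHold}, your (i)--(ii) give only two facts. First, $\rho:=w^2$ is nearly invariant in $\textup{L}^1$ under translation by $a-b$ for $\rho\otimes\rho$-most $(a,b)$. Second, $u^{4/3}$ and $v^{4/3}$ are near translates of $\rho$. The passage from this to level sets $A,B,C$ of comparable measure that are ``almost additively closed'' is only asserted. The layer-cake formula shows that level sets are nearly invariant on average over the levels, not that one level carries most of the mass.

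Even granting such sets, Kemperman's inequality concerns full sumsets, and these are not stable under discarding small mass. A far-away piece $S\subseteq A$ of tiny measure adds $S+B$, of measure at least $\mu(B)$, to $A+B$. So $\mu((A+B)\setminus C)=o(1)\mu(C)$ cannot be extracted from near-equality without further work. What you would need is a popular-sum (Pollard-type) lower bound on $\mu(\{\1_A\ast\1_B>t\})$, valid on every group without compact open subgroups, with $G$-independent constants. The results you name do not provide this: Christ's are for $\R^d$, and Tao's inverse Kneser theorem is for compact connected groups, a case excluded here. As written, your two ``should'' steps carry the whole content of the lemma.

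You can close the gap and keep your architecture by applying Kemperman's inequality to the set of almost-periods instead of to level sets. Let $H_\delta:=\{h\in G:\|\rho(\cdot+h)-\rho\|_{\textup{L}^1(G)}\leq\delta\}$. It is closed and symmetric, and satisfies $H_\delta+H_\delta\subseteq H_{2\delta}$ \emph{exactly}, by the triangle inequality.

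\begin{itemize}
\item Your (i)--(ii) and Markov's inequality give $(\rho\ast\widetilde{\rho})(H_\delta)\geq1-o(1)/\delta$.
\item The smoothed density $\rho':=\rho\ast(\1_{H_{2\delta}}/\mu(H_{2\delta}))$ satisfies $\|\rho'-\rho\|_{\textup{L}^1(G)}\leq2\delta$ and $\|\rho'\ast\widetilde{\rho'}\|_{\textup{L}^\infty(G)}\leq1/\mu(H_{2\delta})$.
\item Combining these gives $\mu(H_\delta)\geq(1-o(1)/\delta-4\delta)\,\mu(H_{2\delta})$. One also checks $0<\mu(H_\delta)\leq\mu(H_{2\delta})<\infty$.
\item With $\delta=1/16$ and $\varepsilon$ small, this contradicts $\mu(H_{2\delta})\geq\mu(H_\delta+H_\delta)\geq2\mu(H_\delta)$.
\end{itemize}

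The only $\varepsilon$-dependence comes from Lemma~\ref{lem:stHold} with fixed weights, so the resulting constant does not depend on $G$.

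Two smaller errors should also be fixed.
\begin{itemize}
\item Your H\"older factorisation is wrong. With three weights $1/4$ the product is $u^{2/3}v^{2/3}w$, and the reciprocal exponents sum to $3/4$. The correct splitting is $(u^{4/3}v^{4/3})^{1/2}(u^{4/3}w^2)^{1/4}(v^{4/3}w^2)^{1/4}$, with exponents $(2,4,4)$.
\item The compact case you set aside is vacuous and misdescribed. A compact $G$ is itself a compact open subgroup, and there $u=v=\1_G$ gives equality, so constants \emph{are} extremisers. The hypothesis forces $\mu(G)=\infty$, which is why Kemperman's bound has no truncation.
\end{itemize}
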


Such an improved Young's inequality, available for LCA groups without compact open subgroups, also has other equivalent readily applicable reformulations; see, e.g., the inverse sumset estimate from \cite[Lemma\,5.5]{EisnerTao}.

Finally we are ready to complete the proof of the motivating result of this paper.

\begin{proof}[Proof of Theorem \ref{thm:mainlog}]
Fix $d\geq2$. 
If either $\|f\|_{\textup{U}^{d-1}(G)}$ or $\|f\|_{\textup{U}^{d+1}(G)}$ vanishes, then $f=0$ almost everywhere and the result is immediate, so we assume that this is not the case.
We also assume that $f$ is bounded and compactly supported, since the general case follows by passage to the limit using the monotone convergence theorem.

Use the following two specialisations of Theorem \ref{thm:diffUdp}(a). The first one is
\[ p=\frac{d}{2d-1},\quad \theta_i=\frac{1}{d},\quad p_i=1 \quad\text{for }1\leq i\leq d, \]
the same one already used in Section \ref{sec:Lebest}.
This choice of parameters in the first inequality of that theorem gives
\[ [f]_{\textup{U}^{d,\,d/(2d-1)}(G)} 
\leq [f]_{\textup{U}^{d-1,1}(G)}
= \|f\|_{\textup{U}^{d-1}(G)}, \]
also see \eqref{eq:fwditer}, while \eqref{eq:just_log_conv} for
\[ p=1, \quad p_1=\frac{d}{2d-1}, \quad p_2=2, \quad \vartheta=\frac{d}{3d-2} \]
yields
\[ [f]_{\textup{U}^{d,1}(G)} \leq [f]_{\textup{U}^{d,\,d/(2d-1)}(G)}^{d/(3d-2)} [f]_{\textup{U}^{d,2}(G)}^{2(d-1)/(3d-2)}. \]
Combining these two we obtain a chain of inequalities
\begin{equation}\label{eq:first_chain}
\begin{aligned}
\|f\|_{\textup{U}^{d}(G)} & \leq [f]_{\textup{U}^{d,\,d/(2d-1)}(G)}^{d/(3d-2)} \|f\|_{\textup{U}^{d+1}(G)}^{2(d-1)/(3d-2)} \\
& \leq \|f\|_{\textup{U}^{d-1}(G)}^{d/(3d-2)} \|f\|_{\textup{U}^{d+1}(G)}^{2(d-1)/(3d-2)}.
\end{aligned}
\end{equation}

The second choice of parameters is
\[ p=2,\quad \theta_1=\frac32,\quad p_1=\frac{6(d-1)}{3d-4},\quad \theta_i=-\frac{1}{2(d-1)},\quad p_i=1 \quad\text{for }2\leq i\leq d \]
and then the reversed inequality of Theorem \ref{thm:diffUdp}(a) gives
\[ [f]_{\textup{U}^{d,2}(G)} \geq [f]_{\textup{U}^{d-1,\,6(d-1)/(3d-4)}(G)}^{3/2} [f]_{\textup{U}^{d-1,1}(G)}^{-1/2}, \]
i.e.,
\[ [f]_{\textup{U}^{d-1,\,6(d-1)/(3d-4)}(G)}
\leq \|f\|_{\textup{U}^{d-1}(G)}^{1/3} \|f\|_{\textup{U}^{d+1}(G)}^{2/3}. \]
Inequality \eqref{eq:just_log_conv} then applies with
\[ p=2, \quad p_1=1, \quad p_2=\frac{6(d-1)}{3d-4}, \quad \vartheta=\frac{1}{3d-2} \]
and gives
\[ [f]_{\textup{U}^{d-1,2}(G)} \leq [f]_{\textup{U}^{d-1,1}(G)}^{1/(3d-2)} [f]_{\textup{U}^{d-1,\,6(d-1)/(3d-4)}(G)}^{3(d-1)/(3d-2)}, \]
so a second possible inequality chain proving the Gowers log-convexity estimate with constant $1$ finally reads
\begin{equation}\label{eq:second_chain}
\begin{aligned}
\|f\|_{\textup{U}^{d}(G)} & \leq \|f\|_{\textup{U}^{d-1}(G)}^{1/(3d-2)} [f]_{\textup{U}^{d-1,\,6(d-1)/(3d-4)}(G)}^{3(d-1)/(3d-2)} \\
& \leq \|f\|_{\textup{U}^{d-1}(G)}^{d/(3d-2)} \|f\|_{\textup{U}^{d+1}(G)}^{2(d-1)/(3d-2)}.
\end{aligned}
\end{equation}

We now prove that the log-convexity constant can be lowered from $1$. We can scale the function $f$ and the Haar measure $\mu$ (as in \cite[Sec.\,2]{BennettTao}) to be able to normalise
\begin{equation}\label{eq:mainnorm}
\|f\|_{\textup{U}^{d-1}(G)} = \|f\|_{\textup{U}^{d+1}(G)}=1.
\end{equation}
Suppose that
\[ \|f\|_{\textup{U}^{d}(G)}\geq1-\varepsilon. \]
The two chains of inequalities, \eqref{eq:first_chain} and \eqref{eq:second_chain}, imply
\begin{equation}\label{eq:maininter}
\left. \begin{aligned}
[f]_{\textup{U}^{d,\,d/(2d-1)}(G)} & = 1+o^{\varepsilon\to0}_d(1),\\
[f]_{\textup{U}^{d-1,\,6(d-1)/(3d-4)}(G)} & =1+o^{\varepsilon\to0}_d(1),\\
\|f\|_{\textup{U}^{d}(G)} & =1+o^{\varepsilon\to0}_d(1).
\end{aligned} \right\}
\end{equation}
The identities \eqref{eq:RL12} and the asymptotic equalities \eqref{eq:maininter} can be expanded as
{\allowdisplaybreaks\begin{align*}
\int_{G^d}(R_df)^{d/(2d-1)}\dd\mu^d
& = [f]_{\textup{U}^{d,\,d/(2d-1)}(G)}^{d\,2^d/(2d-1)} = 1+o^{\varepsilon\to0}_d(1),\\ 
\int_{G^d}R_df\dd\mu^d & =\|f\|_{\textup{U}^{d}(G)}^{2^d} = 1+o^{\varepsilon\to0}_d(1),\\
\int_{G^d}(R_df)^2\dd\mu^d & =1,\\
\int_{G^{d-1}}R_{d-1}f\dd\mu^{d-1} & =1,\\
\int_{G^{d-1}}(R_{d-1}f)^2\dd\mu^{d-1} & =\|f\|_{\textup{U}^{d}(G)}^{2^d} = 1+o^{\varepsilon\to0}_d(1), \\
\int_{G^{d-1}}(R_{d-1}f)^{6(d-1)/(3d-4)}\dd\mu^{d-1} & =[f]_{\textup{U}^{d-1,\,6(d-1)/(3d-4)}(G)}^{6(d-1)2^{d-1}/(3d-4)} = 1+o^{\varepsilon\to0}_d(1).
\end{align*}}
Lemma \ref{lem:momrig}, first with exponents $d/(2d-1)$, $1$, $2$ and then with $1$, $2$, $6(d-1)/(3d-4)$, therefore gives measurable sets $S\subseteq G^d$ and $E\subseteq G^{d-1}$ such that
\begin{equation}\label{eq:mainFbool}
\left. \begin{aligned}
\mu^d(S) & =1+o^{\varepsilon\to0}_d(1), \\
\|R_df-\1_S\|_{\textup{L}^1(G^d)} + \|R_df-\1_S\|_{\textup{L}^2(G^d)} & =o^{\varepsilon\to0}_d(1), \\
\|(R_df)^{d/(2d-1)}-\1_S\|_{\textup{L}^1(G^d)} & =o^{\varepsilon\to0}_d(1), \\
 \mu^{d-1}(E)&=1+o^{\varepsilon\to0}_d(1), \\
\|R_{d-1}f-\1_E\|_{\textup{L}^1(G^{d-1})} + \|R_{d-1}f-\1_E\|_{\textup{L}^2(G^{d-1})} & =o^{\varepsilon\to0}_d(1). 
\end{aligned} \right\}
\end{equation}
Namely, the estimate for $(R_df)^{d/(2d-1)}$ follows from the $\textup{L}^{d/(2d-1)}$ conclusion of that lemma on $G^d$ and the elementary inequality $|s^{d/(2d-1)}-t^{d/(2d-1)}|\leq|s-t|^{d/(2d-1)}$ for $s,t\geq0$.

We now use the characterisation of near-extremisers from Theorem \ref{thm:diffUdp}(b). Since the first inequality chain \eqref{eq:first_chain} is near-extremised, there are nonnegative functions $r_1,\ldots,r_d$ on $G$ of integral $1$ such that
\begin{equation}\label{eq:maintens}
\left. \begin{aligned}
\Bigl\|(R_df)^{d/(2d-1)}-\bigotimes_{j=1}^dr_j\Bigr\|_{\textup{L}^1(G^d)} & = o^{\varepsilon\to0}_d(1), \\
\Bigl\|R_{d-1}f-\mathop{\bigotimes}_{j\ne i}r_j\Bigr\|_{\textup{L}^1(G^{d-1})} & =o^{\varepsilon\to0}_d(1) \quad\text{for }1\leq i\leq d.
\end{aligned} \right\}
\end{equation}
Combining \eqref{eq:mainFbool} and \eqref{eq:maintens} we obtain
\begin{equation}\label{eq:mainind}
\Bigl\|\bigotimes_{j=1}^d r_j-\1_S \Bigr\|_{\textup{L}^1(G^d)} + \max_i\Bigl\| \mathop{\bigotimes}_{j\ne i}r_j-\1_{E} \Bigr\|_{\textup{L}^1(G^{d-1})} = o^{\varepsilon\to0}_d(1).
\end{equation}
Let
\[ \Omega := \bigcap_{i=1}^d \pi_i^{-1}(E). \]
Then
\[ \int_{\Omega^c}\bigotimes_{j=1}^dr_j\dd\mu^d \leq \sum_{i=1}^d\int_{E^c} \mathop{\bigotimes}_{j\ne i}r_j\dd\mu^{d-1} = o^{\varepsilon\to0}_d(1), \]
together with \eqref{eq:mainind}, yields $\mu^d(S\setminus\Omega)=o^{\varepsilon\to0}_d(1)$ and hence
\[ \mu^d(\Omega)\geq1-o^{\varepsilon\to0}_d(1). \]
The geometric Loomis--Whitney inequality gives the matching upper bound
\[ \mu^d(\Omega) \leq \prod_{i=1}^d\mu^{d-1}(E)^{1/(d-1)} = 1+o^{\varepsilon\to0}_d(1). \]
Since both $\mu^d(S)$ and $\mu^d(\Omega)$ are $1+o^{\varepsilon\to0}_d(1)$, the preceding estimate for $S\setminus\Omega$ also gives $\mu^d(S\mathbin{\triangle}\Omega)=o^{\varepsilon\to0}_d(1)$. Thus $\Omega$ satisfies the hypothesis of Lemma \ref{lem:cylLWst} with error $o^{\varepsilon\to0}_d(1)$. That lemma supplies measurable sets $A_1,\ldots,A_d\subseteq G$ such that, for $B=A_1\times\cdots\times A_d$,
\begin{equation}\label{eq:mainFbox}
\begin{aligned}
\mu^d(B) & = 1+o^{\varepsilon\to0}_d(1), \\ 
\|R_df-\1_B\|_{\textup{L}^1(G^d)} + \|R_df-\1_B\|_{\textup{L}^2(G^d)} & = o^{\varepsilon\to0}_d(1).
\end{aligned}
\end{equation}
We could have also been more direct in the derivation of \eqref{eq:mainFbox} and rather used the standard arguments by which Lemma \ref{lem:cylLWst} can be deduced from Lemma \ref{lem:LWfact}.

Write
\[ B^{(i)}:=A_1\times\cdots\times A_{i-1}\times A_{i+1}\times\cdots\times A_d. \]
Marginalising the $\textup{L}^1$ estimate in \eqref{eq:mainFbox} and using \eqref{eq:Rmarg} gives
\[ \|(R_{d-1}f)^2-\mu(A_i)\1_{B^{(i)}}\|_{\textup{L}^1(G^{d-1})} = o^{\varepsilon\to0}_d(1). \]
Since $(s^{1/2}-t^{1/2})^2\leq|s-t|$,
\[ \|R_{d-1}f-\mu(A_i)^{1/2}\1_{B^{(i)}}\|_{\textup{L}^2(G^{d-1})} = o^{\varepsilon\to0}_d(1). \]
The squared $\textup{L}^2$ norms of these two functions are both $1+o^{\varepsilon\to0}_d(1)$. Their inner product is therefore $1+o^{\varepsilon\to0}_d(1)$, whereas
\[ \int_{G^{d-1}} R_{d-1}f \,\mu(A_i)^{1/2} \1_{B^{(i)}}\dd\mu^{d-1} \leq \mu(A_i)^{1/2} \int_{G^{d-1}}R_{d-1}f\dd\mu^{d-1} = \mu(A_i)^{1/2}. \]
It follows that $\mu(A_i)\geq1-o^{\varepsilon\to0}_d(1)$. Since 
\[ \prod_{i=1}^d \mu(A_i)=\mu^d(B)=1+o^{\varepsilon\to0}_d(1), \]
we conclude that
\begin{equation}\label{eq:mainside}
\mu(A_i)=1+o^{\varepsilon\to0}_d(1)\quad\text{for } 1\leq i\leq d.
\end{equation}

We finish by taking a two-dimensional slice. Use the conventions that $G^0$ consists only of the identity element, while $R_0f$ evaluates as $\int_G f\dd\mu$. 
For $z\in A_3\times\cdots\times A_d \subseteq G^{d-2}$ let
\[ e_1(z):=\|R_df(\,\cdot\,,\,\cdot\,,z)-\1_{A_1\times A_2}\|_{\textup{L}^1(G^2)},
\quad e_2(z):=\|R_df(\,\cdot\,,\,\cdot\,,z)-\1_{A_1\times A_2}\|_{\textup{L}^2(G^2)}^2. \]
Equation \eqref{eq:mainFbox} gives 
\[ \int_{A_3\times\cdots\times A_d}(e_1+e_2)\dd\mu^{d-2} = o^{\varepsilon\to0}_d(1), \]
while Markov's inequality and \eqref{eq:mainside} then give a set $D\subseteq A_3\times\cdots\times A_d$ of measure $1+o^{\varepsilon\to0}_d(1)$ on which $e_1+e_2=o^{\varepsilon\to0}_d(1)$. Moreover,
\[ \int_{G^{d-2}}R_{d-2}f(z)^2\dd\mu^{d-2}(z) = \|f\|_{\textup{U}^{d-1}(G)}^{2^{d-1}}=1. \]
Since the integrand is nonnegative, its integral over $D$ is at most one. Hence some $z\in D$ satisfies
\begin{equation}\label{eq:goodsl}
e_1(z)+e_2(z) = o^{\varepsilon\to0}_d(1),
\quad R_{d-2}f(z)^2 \leq\mu^{d-2}(D)^{-1} = 1+o^{\varepsilon\to0}_d(1).
\end{equation}

For this $z$, define
\[ g(x):=\prod_{\omega\in\{0,1\}^{d-2}} f(x+\omega\cdot z). \]
(For $d=2$ this means simply $g=f$.) Then
\[ R_2g(h_1,h_2)=R_df(h_1,h_2,z), \quad \int_Gg\dd\mu=R_{d-2}f(z). \]
Estimates \eqref{eq:mainside} and \eqref{eq:goodsl} together imply
\begin{equation}\label{eq:sliceU23}
\|g\|_{\textup{U}^2(G)}^4=1+o^{\varepsilon\to0}_d(1),
\quad \|g\|_{\textup{U}^3(G)}^8=1+o^{\varepsilon\to0}_d(1),
\end{equation}
and the marginal identity \eqref{eq:Rmarg} gives
\begin{align*}
\|(R_1g)^2-\mu(A_2)\1_{A_1}\|_{\textup{L}^1(G)} & = o^{\varepsilon\to0}_d(1), \\
\|(R_1g)^2-\mu(A_1)\1_{A_2}\|_{\textup{L}^1(G)} & = o^{\varepsilon\to0}_d(1).
\end{align*}
Taking square roots in the first estimate and using the Cauchy--Schwarz inequality on $A_1$ yields
\[ R_{d-2}f(z)^2=\int_G R_1g \dd\mu
\geq \mu(A_2)^{1/2}\,\mu(A_1)-o^{\varepsilon\to0}_d(1)
= 1-o^{\varepsilon\to0}_d(1). \]
Together with \eqref{eq:goodsl}, this proves
\begin{equation}\label{eq:slmass}
\int_G g\dd\mu = 1+o^{\varepsilon\to0}_d(1).
\end{equation}

The two marginal estimates and \eqref{eq:mainside} imply $\mu(A_1\mathbin{\triangle}A_2)=o^{\varepsilon\to0}_d(1)$. Since $R_1g$ is even, they also imply $\mu(A_1\mathbin{\triangle}(-A_1))=o^{\varepsilon\to0}_d(1)$. Put $K:=A_1\cap(-A_1)$. Then $K=-K$, $\mu(K)=1+o^{\varepsilon\to0}_d(1)$, and replacing $A_1\times A_2$ by $K\times K$ gives
\begin{equation}\label{eq:sltens}
\begin{aligned}
\|R_2g-\1_{K\times K}\|_{\textup{L}^1(G^2)} + \|R_2g-\1_{K\times K}\|_{\textup{L}^2(G^2)} & =o^{\varepsilon\to0}_d(1), \\
\|(R_1g)^2-\1_K\|_{\textup{L}^1(G)} & =o^{\varepsilon\to0}_d(1). 
\end{aligned}
\end{equation}
On $K$, we have $(R_1g-1)^2\leq|(R_1g)^2-1|$. Since 
\[ \int_G R_1g\dd\mu = \Big(\int_Gg\dd\mu\Big)^2 = 1+o^{\varepsilon\to0}_d(1), \]
we obtain
\begin{equation}\label{eq:corrtail}
\int_{K^c}R_1g\dd\mu = o^{\varepsilon\to0}_d(1).
\end{equation}

Apply Lemma \ref{lem:diagleak} to $g$ and $K^c$, so that equations \eqref{eq:sliceU23}, \eqref{eq:slmass} and \eqref{eq:corrtail} give
\[ \iint_{\{h_1+h_2\notin K\}} R_2g(h_1,h_2) \dd\mu(h_1)\dd\mu(h_2) = o^{\varepsilon\to0}_d(1). \]
Together with \eqref{eq:sltens}, this yields
\[ \mu^2 \bigl(\{(h_1,h_2)\in K^2 : h_1+h_2\notin K\}\bigr) = o^{\varepsilon\to0}_d(1). \]
Since $\mu(K)=1+o^{\varepsilon\to0}_d(1)$, we therefore have
\[ \langle\1_K\ast \1_K,\1_K\rangle_{\textup{L}^2(G)} = 1 - o^{\varepsilon\to0}_d(1). \]
The Cauchy--Schwarz inequality finally gives
\begin{equation}\label{eq:mainYnear}
\|\1_K\ast \1_K\|_{\textup{L}^2(G)} \geq \bigl(1-o^{\varepsilon\to0}_d(1)\bigr) \|\1_K\|_{\textup{L}^{4/3}(G)}^2.
\end{equation}
Choose $\varepsilon_d\in(0,1)$ so small that the coefficient $1-o^{\varepsilon\to0}_d(1)$ in \eqref{eq:mainYnear} is greater than $1-\eta_{\mathrm{Young}}$ whenever $0<\varepsilon\leq\varepsilon_d$, where $\eta_{\mathrm{Young}}$ is the absolute constant in Lemma \ref{lem:unifYgap}. If $G$ has no compact open subgroup, that lemma contradicts \eqref{eq:mainYnear}. Hence, under \eqref{eq:mainnorm}, we have $\|f\|_{\textup{U}^{d}(G)}<1-\varepsilon_d$, so undoing the normalisation proves the improved log-convexity estimate \eqref{eq:mainlog}.
\end{proof}

As we have said earlier, there were alternative ways of obtaining a contradiction from the existence of such a set $K$, such as a variant of \cite[Lemma\,5.5]{EisnerTao}. We chose to use the improved Young inequality because it is more classical, and to retain the analogy with the proof of Theorem \ref{thm:sharpRn}(b).


\section{One-dimensional obstructions}
\label{sec:realobs}

We next record two observations specific to the real line $G=\R$ and the parameter $d=2$. They explain why it is unlikely that Theorem \ref{thm:mainlog} can be established by studying only the simplest autocorrelation function $R_1 f = f\ast\widetilde{f}$. The authors admit to having spent considerable time pursuing this route and therefore find it useful to record the obstructions. Consequently, when specialised to $d=2$, the proof had to be essentially ``two-dimensional'': it also had to consider the multiple autocorrelation function $R_2 f$.

More precisely, if 
\[ f\geq0, \quad \|f\|_{\textup{U}^1(\R)}=1, \quad \|f\|_{\textup{U}^3(\R)}=1, \]
and 
\[ \|f\|_{\textup{U}^2(\R)} \text{ is sufficiently close to } 1, \]
then the preceding arguments readily show that $f\ast\widetilde{f}$ must be correspondingly close to the indicator function $\1_E$ of a symmetric set $E\subset\R$ of measure $1$ in both the $\textup{L}^1$ and $\textup{L}^2$ norms; cf.\@ the proof of \cite[Prop.\,2.5]{BennettTao}.
We will see that this closeness in $\textup{L}^1(\R)$ alone is insufficient to yield a contradiction, whereas an argument based on closeness in $\textup{L}^2(\R)$ runs into a well-known open problem.

\subsection{Obstruction to the \texorpdfstring{$\textup{L}^1$}{L1} route}

Here is a construction showing that the autocorrelation $f\ast\widetilde{f}$ of a nonnegative function $f$ of total mass $1$ can be arbitrarily close to an indicator function in the $\textup{L}^1$ sense.
Let
\[ \operatorname{dist}\bigl(t,\{0,1\}\bigr) = \begin{cases}
|t| & \text{if } t\leq1/2, \\
|t-1| & \text{if } t>1/2 \\
\end{cases} \]
be the distance of a real number $t$ to the set $\{0,1\}$.
For a measurable function $f\colon\R\to\R$, put
\[ \mathcal{E}(f):=\int_\R \operatorname{dist}\bigl(f(x),\{0,1\}\bigr)\dd x. \]
This functional measures how close $f$ is to an indicator function in the norm of $\textup{L}^1(\R)$.

\begin{proposition}
There exists a sequence $(f_k)_{k=0}^\infty$ of bounded compactly supported measurable functions $f_k\colon\R\to[0,\infty)$, each having integral $1$, such that
\[ \lim_{k\to\infty} \mathcal{E}\bigl(f_k\ast\widetilde{f_k}\bigr) = 0. \]
\end{proposition}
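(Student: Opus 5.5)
The plan is to build $f_k$ as a sum of many well-separated flat pieces at very different scales. The intended mechanism is that almost all of the mass of $f_k\ast\widetilde{f_k}$ comes from cross terms between pieces, and each cross term is close to a plateau of height exactly $1$. Two elementary facts drive the bookkeeping. If $\varphi=\frac{m}{w}\1_{[0,w]}$ and $\psi=\frac{M}{W}\1_{[0,W]}$ with $w\ll W$, then $\varphi\ast\widetilde{\psi}$ is a trapezoid of mass $mM$, height $mM/W$, and plateau length $W-w$. On the plateau $\operatorname{dist}(\cdot,\{0,1\})$ vanishes once $W=mM$. On the two ramps, of length $w$, the contribution to $\mathcal{E}$ is at most $mMw/W$. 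Also, if all the pieces sit at generic, very far apart positions, then the supports of the translates in
\[
f\ast\widetilde f=\sum_{j,k}\varphi_j\ast\widetilde{\varphi_k}
\]
coming from different ordered pairs $(j,k)$, $j\neq k$, are pairwise disjoint. Hence $\mathcal{E}(f\ast\widetilde f)$ splits as a sum over pairs, plus the diagonal part $\sum_j\varphi_j\ast\widetilde{\varphi_j}$, which is supported near $0$.

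The first step is to record this accounting exactly. Let $\mathcal{B}$ be the set of pairs whose cross term is not a height-$1$ plateau. Then $\mathcal{E}$ is bounded by three contributions: the total mass $\sum_j m_j^2$ of the diagonal terms, the total mass of the cross terms indexed by $\mathcal{B}$, and the sum of the ramp errors. This uses only $\operatorname{dist}(t,\{0,1\})\leq t$ and the disjointness of supports.

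The second step is to choose masses and widths so that the good pairs carry mass $1-o(1)$. The naive one-spike construction already fails here. Take a spike of mass $a$ and wide pieces of widths $W_k=ab_k$: only the pairs involving the spike are good, and they carry mass $2a(1-a)\leq\frac12$. So the construction must be iterated. At stage $k$, I would replace each piece by a rescaled copy of the stage-$(k-1)$ configuration, placed at a much smaller scale and at generic positions. The aim is that the cross terms created at each new scale are again height-$1$ plateaus, and that the bad mass $\beta_k$ satisfies a recursion of the form $\beta_k\leq c\,\beta_{k-1}+o(1)$ with $c<1$. Throughout, horizontal rescaling has to be tracked using
\[
F(x)=c f(x/s),\ \int F=1 \quad\Longrightarrow\quad (F\ast\widetilde F)(t)=s^{-1}(f\ast\widetilde f)(t/s),
\]
so heights and scales are coupled. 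The parameters (masses, scale ratios, separations) will be fixed at each stage so that the ramp errors and the diagonal mass are summable and tend to zero.

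I expect this second step to be the main obstacle. Simple two-scale or doubling recursions do not work. For instance, $(g+g(\cdot-T))/2$ followed by rescaling keeps only half of the mass on a height-$1$ plateau, and the side copies have height $\frac12$. The height condition $W_k=m_jm_k$ for a pair $j<k$ forces the masses of all the smaller pieces interacting with a given larger piece to be essentially equal. So the design must make the pieces interacting with a given piece have equal masses at each level, while the number of pieces grows fast enough that $\sum_j m_j^2\to0$.

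The first thing I would try is a genuinely multiscale arrangement. Take $N$ pieces of equal mass $1/N$, with widths $w_1\ll\cdots\ll w_N$. Replace the $j$th flat piece by a sparse "comb" of $L_j$ sub-bumps spread over length $w_j$, with $L_j$ chosen so that the averaged profile seen in each cross term with smaller pieces has height $N^{-2}/w_j$. Then choose $w_j\approx N^{-2}$ after accounting for the comb density. If the combs are lacunary (Sidon-like), they keep their internal self-correlations spread out at height $o(1)$, contributing only $O(1/N)$ mass, while the cross terms remain close to plateaus. Summing everything, the bad mass would be $O(1/N)$ plus ramp errors that can be made arbitrarily small. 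Letting $N\to\infty$ along $k$ would then give $\mathcal{E}(f_k\ast\widetilde{f_k})\to0$.
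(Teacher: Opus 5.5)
Your proposal has a genuine gap: it never gives a construction. The accounting in your first step is fine. The second step, which you yourself flag as the main obstacle, is the entire content of the proposition, and the one concrete design you offer cannot work.

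In the comb construction every sub-bump is flat and sits at a Sidon/generic position, so every pairwise cross term has its own support. This is exactly the one-level framework of your first step, and that framework is obstructed. Call a pair of flat bumps with $w_a\leq w_b$ good if its cross term is a height-$1$ plateau with short ramps; this forces $w_a\ll w_b$ and $m_am_b\approx w_b$.

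Suppose four bumps with $w_a\ll w_b\ll w_c\ll w_d$ were pairwise good. Pairing with $d$ gives $m_a\approx m_b\approx m_c$, which is your own equal-mass observation. The pairs $(a,b)$ and $(a,c)$ then give $w_b\approx m_am_b\approx m_am_c\approx w_c$, contradicting $w_b\ll w_c$. So the good pairs form a $K_4$-free graph. By the Motzkin--Straus inequality they carry at most $2/3$ of the total mass $\sum_{a,b}m_am_b=1$.

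Once $\sum_a m_a^2\to0$, as you require, a fixed proportion of $f\ast\widetilde f$ lies in bad cross terms with disjoint supports. Hence $\mathcal{E}(f\ast\widetilde f)$ stays bounded below by an absolute constant. Appealing to an ``averaged profile'' does not rescue this, because $\mathcal{E}$ sees pointwise heights, not averages.

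Your sketched recursion, replacing pieces by shrunken copies of the previous stage, points in the right direction. The missing idea is to drop the requirement that all pairwise cross terms have disjoint supports. A wide flat interval should see an entire tiny cluster as a point mass, so that the cluster's internal contributions overlap and sum to exactly height $1$, whatever the cluster looks like inside.

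The paper does this self-similarly, using the height-preserving shrinking $f\mapsto Nf(N^2\,\cdot\,)$. It has mass $1/N$ and autocorrelation $(f\ast\widetilde f)(N^2\,\cdot\,)$.
\begin{itemize}
\item Take $g_k=\sum_{j=1}^N Nf_k(N^2(\cdot-M2^j))$. Its cross terms are compressed replicas of $f_k\ast\widetilde{f_k}$ with the same heights, and its diagonal has mass only $1/N$. Hence $\mathcal{E}(g_k\ast\widetilde{g_k})\leq(1-1/N)\mathcal{E}(f_k\ast\widetilde{f_k})+1/N$.
\item Set $f_{k+1}=\alpha^{-1}g_k(\cdot/\alpha^2)+\frac{N}{\alpha}\1_{[L,L+\alpha\beta/N]}$ with $\alpha+\beta=1$. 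Each copy of $f_k$ now has mass $\alpha/N$ and width $\alpha^2D/N^2\ll\alpha\beta/N$. So its cross term with the interval of height $N/\alpha$ is a height-$1$ plateau.
\end{itemize}
This yields $\mathcal{E}_{k+1}\leq\alpha^2(1-1/N)\mathcal{E}_k+\alpha^2(1+4D)/N+\beta^2$. Choosing $\alpha=(k+1)/(k+2)$, $\beta=1/(k+2)$ and $N$ large gives $\mathcal{E}_k=O(1/k)$.

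The copying step alone gains nothing; all the improvement comes from the $2\alpha\beta$ mass of new plateaus. In this recursion the contraction factor $\alpha^2$ tends to $1$, so it is not of your proposed form with a fixed $c<1$.
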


Consequently, for every $\varepsilon>0$ there exist a measurable set $E\subseteq\R$ and a measurable function $f$ such that
\[ -E=E, \quad |E|=1, \quad f\geq0,\quad \int_{\R}f=1,\quad \text{and}\quad \bigl\|f\ast\widetilde{f}-\1_E\bigr\|_{\textup{L}^1(\R)}<\varepsilon. \]
This fact might seem counterintuitive at first, because the positive semidefinite function $f\ast\widetilde{f}$ necessarily has a ``tall peak'' around $0$, but note that we are measuring the proximity in the $\textup{L}^1$ norm.

\begin{proof}
Let us construct the desired sequence $(f_k)_{k=0}^\infty$ inductively. Start by putting $f_0:=\1_{[0,1]}$. Suppose that $f_k$ is already constructed for some index $k\geq0$ and is
supported in an interval of length $D$.
First define 
\[ g_k(x) := \sum_{j=1}^N N f_k\bigl(N^2(x-M\,2^j)\bigr), \]
where $M>0$ and $N\in\N$ are parameters to be chosen later. Note that $g_k$ still has integral $1$ and its autocorrelation can be expanded as
\[ \bigl(g_k\ast\widetilde{g_k}\bigr)(h) = N \bigl(f_k\ast\widetilde{f_k}\bigr)(N^2 h) + \sum_{i\ne j} \bigl(f_k\ast\widetilde{f_k}\bigr)\bigl(N^2(h-M(2^j-2^i))\bigr). \]
For sufficiently large $M$, the diagonal term and all the off-diagonal summands above have pairwise disjoint supports, so
\begin{equation}\label{eq:gk_auto}
\mathcal{E}\bigl(g_k\ast\widetilde{g_k}\bigr) \leq\Bigl(1-\frac{1}{N}\Bigr)\mathcal{E}\bigl(f_k\ast\widetilde{f_k}\bigr) + \frac{1}{N}.
\end{equation}
Indeed, the $N(N-1)$ off-diagonal terms contribute $(1-1/N)\mathcal{E}(f_k\ast\widetilde{f_k})$, while the diagonal contribution is at most $1/N$.

Next, we define
\[ f_{k+1}(x) := \frac{1}{\alpha}g_k\Bigl(\frac{x}{\alpha^2}\Bigr) + \frac{N}{\alpha} \1_{[L,L+\alpha\beta/N]}(x), \]
for some $\alpha,\beta>0$ with $\alpha+\beta=1$ and some $L>0$, which will also be chosen later.
Note that
\begin{align} 
\bigl(f_{k+1}\ast\widetilde{f_{k+1}}\bigr)(h) 
& = \bigl(g_k\ast\widetilde{g_k}\bigr)\Bigl(\frac{h}{\alpha^2}\Bigr) \label{eq:auto1} \\
& \quad + N \bigl(\widetilde{g_k}\ast\1_{[L/\alpha^2,L/\alpha^2+\beta/(\alpha N)]}\bigr)\Bigl(\frac{h}{\alpha^2}\Bigr) \label{eq:auto2} \\
& \quad + N \bigl(g_k\ast\1_{[-L/\alpha^2-\beta/(\alpha N),-L/\alpha^2]}\bigr)\Bigl(\frac{h}{\alpha^2}\Bigr) \label{eq:auto3} \\
& \quad + \frac{N^2}{\alpha^2}\bigl(\1_{[0,\alpha\beta/N]}\ast\1_{[-\alpha\beta/N,0]}\bigr)(h) \label{eq:auto4}
\end{align}
and the function $f_{k+1}$ again has integral $1$.
Taking $L$ sufficiently large makes the supports of the terms in \eqref{eq:auto1}--\eqref{eq:auto3} mutually disjoint, so we can estimate their contributions to $\mathcal{E}$ separately, regarding them as functions of $h$.
After shrinking \eqref{eq:gk_auto} by the factor $\alpha^2$, we estimate $\mathcal{E}$ of \eqref{eq:auto1} by
\[ \leq \alpha^2 \Bigl(1-\frac{1}{N}\Bigr) \mathcal{E}\bigl(f_k\ast\widetilde{f_k}\bigr) + \frac{\alpha^2}{N}. \]
Next, the convolution $N\widetilde{g_k}\ast\1_{[L/\alpha^2,L/\alpha^2+\beta/(\alpha N)]}$ appearing in \eqref{eq:auto2} can be expanded as
\[ \sum_{j=1}^N N^2 f_k\bigl(-N^2(\cdot+M\,2^j)\bigr) \ast\1_{[L/\alpha^2,L/\alpha^2+\beta/(\alpha N)]}. \]
Note that each term is a convolution of the indicator of the interval $[L/\alpha^2,L/\alpha^2+\beta/(\alpha N)]$ with a nonnegative function of total mass $1$ supported on an interval of length $D/N^2$. Such a convolution can differ from $0$ or $1$ only on two intervals of length $D/N^2$ each and the condition $N\geq 10\alpha D/\beta$ ensures that these intervals are disjoint. By increasing $M$ if necessary, the $N$ convolutions corresponding to distinct values of $j$ have pairwise disjoint supports. After scaling by $\alpha^2$, we therefore bound the contribution of \eqref{eq:auto2} to $\mathcal{E}$ by $2\alpha^2 D/N$.
The term \eqref{eq:auto3} is then handled in exactly the same way.
Finally, the last term \eqref{eq:auto4} can only perturb the value of $\mathcal{E}$ by at most its $\textup{L}^1$ norm, which is $\beta^2$.
Adding all these contributions gives
\begin{equation}\label{eq:L1step}
\mathcal{E}\bigl(f_{k+1}\ast\widetilde{f_{k+1}}\bigr) \leq \alpha^2 \Bigl(1-\frac{1}{N}\Bigr) \mathcal{E}\bigl(f_k\ast\widetilde{f_k}\bigr) + \frac{\alpha^2(1+4D)}{N} + \beta^2 .
\end{equation}

Now we adjust the parameters in this inductive construction.
We simply set
\[ \alpha=\frac{k+1}{k+2}, \quad \beta=\frac{1}{k+2}. \]
Then we choose $N$ sufficiently large (depending on $k$ and $D$) as before, but also so that
\[ \frac{\alpha^2(1+4D)}{N} \leq \beta^4. \]
We then choose $M$, and finally $L$, as required by the preceding arguments.
In this way, \eqref{eq:L1step} gives us
\[ \mathcal{E}\bigl(f_{k+1}\ast\widetilde{f_{k+1}}\bigr) \leq \Bigl(\frac{k+1}{k+2}\Bigr)^2 \mathcal{E}\bigl(f_k\ast\widetilde{f_k}\bigr) + \frac{1}{(k+2)^2}+\frac{1}{(k+2)^4}. \]
Iterating the multiplied estimate
\[ (k+2)^2 \mathcal{E}\bigl(f_{k+1}\ast\widetilde{f_{k+1}}\bigr) \leq (k+1)^2 \mathcal{E}\bigl(f_k\ast\widetilde{f_k}\bigr) + 1 +\frac{1}{(k+2)^2} \]
we get
\[ (k+1)^2 \mathcal{E}\bigl(f_k\ast\widetilde{f_k}\bigr) = O^{k\to\infty}(k), \]
which finally gives $\mathcal{E}(f_k\ast\widetilde{f_k}) = O^{k\to\infty}(1/k)$.
\end{proof}

The first two steps of the construction in the previous proof are illustrated in Figures \ref{fig:gowers_auto1} and \ref{fig:gowers_auto2}, produced with Mathematica~\cite{Mathematica}.

\begin{figure}
\begin{center}
\includegraphics[width=0.8\linewidth]{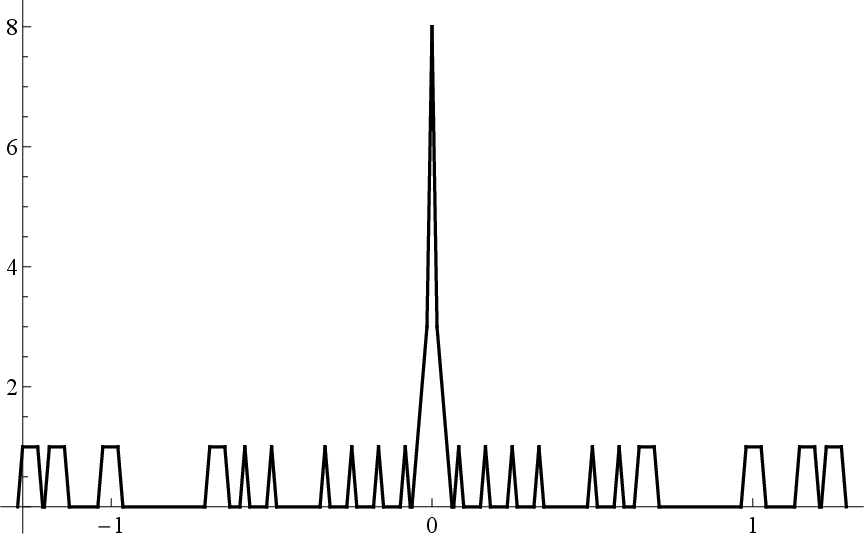}
\end{center}
\caption{Graph of $f_1\ast\widetilde{f_1}$.}
\label{fig:gowers_auto1}
\end{figure}

\begin{figure}
\begin{center}
\includegraphics[width=0.8\linewidth]{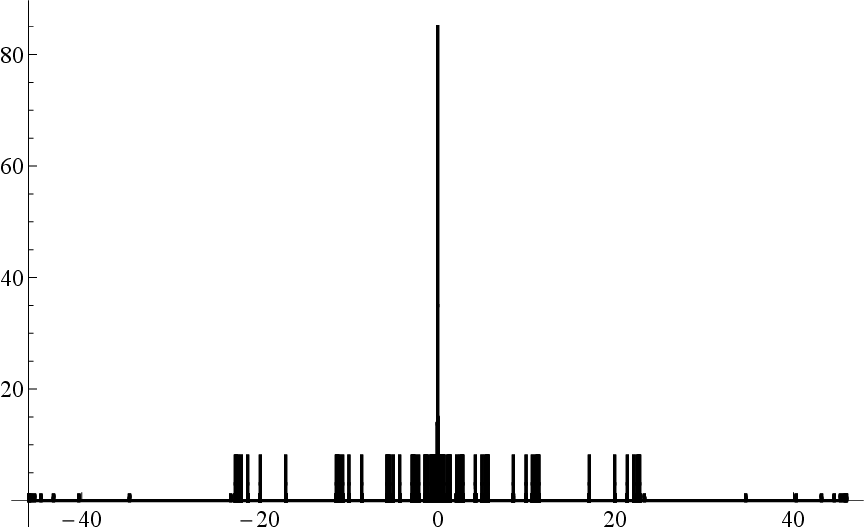}\\
\includegraphics[width=0.8\linewidth]{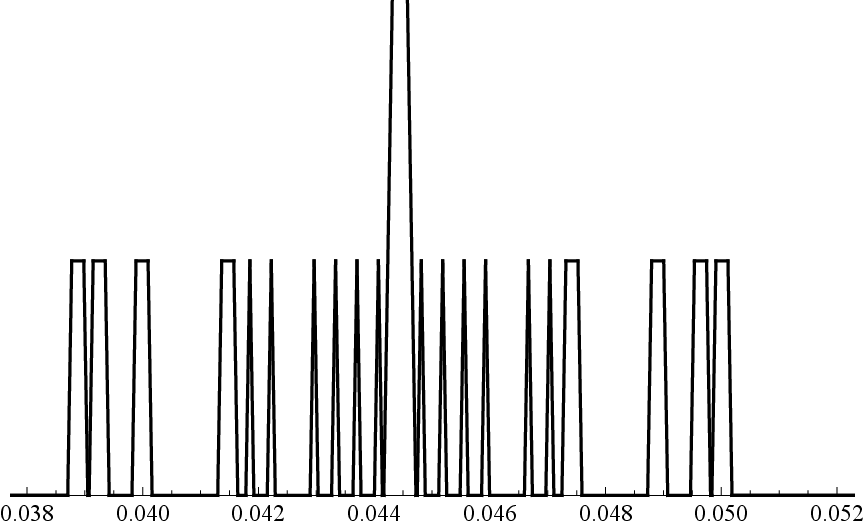}
\end{center}
\caption{Graph of $f_2\ast\widetilde{f_2}$ (top) and an enlarged view around the point $(0.045,1)$ (bottom).}
\label{fig:gowers_auto2}
\end{figure}

\subsection{Obstruction to the \texorpdfstring{$\textup{L}^2$}{L2} route}

Passing to the Fourier side and using Plancherel's theorem, we get
\[ \bigl\|f\ast\widetilde{f}-\1_{E}\bigr\|_{\textup{L}^2(\R)}
= \Bigl\| \bigl|\widehat{f}\bigr|^2 - \widehat{\1_{E}} \Bigr\|_{\textup{L}^2(\R)}
\geq \bigl\| (\widehat{\1_E})_- \bigr\|_{\textup{L}^2(\R)}. \]
Consequently, Theorem \ref{thm:mainlog} in the case $G=\R$ and $d=2$ would follow if we could show 
\begin{equation}\label{eq:Fourgap}
\inf_{\substack{E\text{ measurable}\\E=-E,\ |E|=1}} \bigl\|(\widehat{\1_E})_-\bigr\|_{\textup{L}^2(\R)}>0.
\end{equation}

The last claim has a discrete reformulation. For a nonempty symmetric finite set $A\subset\Z$ put
\[ P_A(t):=\sum_{a\in A}e^{-2\pi iat},
\quad E_A:=\bigcup_{a\in A}\Bigl[\frac{a-1/2}{|A|},\frac{a+1/2}{|A|}\Bigr]. \]
Then $|E_A|=1$ and
\[ \widehat{\1_{E_A}}(\xi) = \frac{1}{|A|}\operatorname{sinc}\Bigl(\frac{\xi}{|A|}\Bigr) P_A\Bigl(\frac{\xi}{|A|}\Bigr), \]
so
\[ \bigl\|(\widehat{\1_{E_A}})_-\bigr\|_{\textup{L}^2(\R)}^2 =\frac{1}{|A|}\int_\R \bigl(\operatorname{sinc}(t)P_A(t)\bigr)_-^2\dd t. \]
Thus, \eqref{eq:Fourgap} implies that there exists an absolute constant $c>0$ such that
\begin{equation}\label{eq:wcosgap}
\int_\R\bigl(\operatorname{sinc}(t)P_A(t)\bigr)_-^2\dd t \geq c|A|
\end{equation}
holds for every nonempty finite $A\subset\Z$ with $-A=A$ and $0\notin A$.
It is also easy to use a standard approximation argument to justify that \eqref{eq:Fourgap} and \eqref{eq:wcosgap} are, in fact, equivalent.

A further natural sufficient condition for \eqref{eq:Fourgap} is the unweighted inequality
\begin{equation}\label{eq:qChowla}
\int_{\T} \bigl(P_A(t)\bigr)_-^2 \dd t\geq c|A|.
\end{equation}
Indeed, $\operatorname{sinc}(t)\geq2/\pi$ on $[-1/2,1/2]$, so \eqref{eq:qChowla} implies \eqref{eq:wcosgap}.
By contrast, the theorem of McGehee, Pigno, and Smith \cite{GPS81} and Konyagin \cite{Konyagin1981} yields only
\[ \int_\T \bigl(P_A(t)\bigr)_-^2 \dd t \gtrsim (\log|A|)^2, \]
because $\int_\T P_A=0$ and Littlewood's conjecture, proved in those papers, asserts that $\|P_A\|_{\textup{L}^1(\T)} \gtrsim \log|A|$.
This is still much weaker than \eqref{eq:qChowla}.
In fact, \eqref{eq:qChowla} would also settle the conjectured sharp order of magnitude in Chowla's cosine problem \cite{Chowla1965}: for a symmetric finite set $A\subset\Z\setminus\{0\}$ one has
\[ P_A(t)=\sum_{a\in A}\cos(2\pi at), \]
and \eqref{eq:qChowla} forces $P_A$ to attain a negative value of magnitude at least a constant multiple of $|A|^{1/2}$. 
There has been some very recent progress on Chowla's cosine problem \cite{Bedert25,JMTZ25}, but insufficient to prove \eqref{eq:qChowla} if it even holds.


\section{Sharp estimates on discrete cubes}
\label{sec:dcube}

This section is devoted to the proof of Theorem \ref{thm:dcres}. The following product principle is the analogue of \cite[Prop.\,2]{BCK25}, which, in turn, was a modification of \cite[Prop.\,21]{DGIM21}.

\begin{proposition}\label{prop:dcprod}
Let $q,t>0$ satisfy $qt=2^d p$. The following assertions are equivalent.
\begin{enumerate}[(a)]
\item Every nonnegative $f\colon\Z\to[0,\infty)$ supported in $\{0,1,\ldots,n-1\}$ satisfies
\[ [f]_{\textup{U}^{d,p}(\Z)}\leq \|f\|_{\ell^q(\Z)}. \]
\item For every $m\geq0$, every nonnegative $f$ supported in $\{0,1,\ldots,n-1\}^m$ satisfies
\[ [f]_{\textup{U}^{d,p}(\Z^m)}\leq \|f\|_{\ell^q(\Z^m)}. \]
\item For every $m\geq0$ and every $A\subseteq\{0,1,\ldots,n-1\}^m$ one has $\mathcal{P}_{d,p}(A)\leq|A|^t$.
\item There is a constant $C<\infty$ such that for every $m\geq0$ and every $A\subseteq\{0,1,\ldots,n-1\}^m$ one has $\mathcal{P}_{d,p}(A)\leq C|A|^t$.
\end{enumerate}
Consequently, the extremal exponents $q_{d,p,n}$ and $t_{d,p,n}$ are attained and \eqref{eq:dcrel} holds.
\end{proposition}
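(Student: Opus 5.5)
The plan is to prove the cycle (a)$\Rightarrow$(b)$\Rightarrow$(c)$\Rightarrow$(d)$\Rightarrow$(c)$\Rightarrow$(a). The implications (b)$\Rightarrow$(c)$\Rightarrow$(d) are immediate. Taking $f=\1_A$ in (b) gives
\[ \mathcal{P}_{d,p}(A)=[\1_A]_{\textup{U}^{d,p}(\Z^m)}^{2^d p}\leq|A|^{2^d p/q}=|A|^t, \]
since $qt=2^dp$. Then (d) follows from (c) with $C=1$.

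For (d)$\Rightarrow$(c) I would use the tensor power trick. If $A\subseteq\{0,\ldots,n-1\}^m$, then $A^k\subseteq\{0,\ldots,n-1\}^{mk}$. By \eqref{eq:rewriteUdp}, the counting problem factorises over coordinates, so $\mathcal{P}_{d,p}(A^k)=\mathcal{P}_{d,p}(A)^k$. Hence $\mathcal{P}_{d,p}(A)^k\leq C|A|^{tk}$, and taking $k$th roots and letting $k\to\infty$ removes $C$.

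For (c)$\Rightarrow$(a) I would again use tensor powers, now combined with a level-set decomposition. This is where integrality of $p$ enters, through the triangle inequality for $[|\cdot|]_{\textup{U}^{d,p}}$ from Corollary \ref{cor:Udpnorms}.
\begin{enumerate}[(i)]
\item Take $f$ supported in $\{0,\ldots,n-1\}$. By approximation we may assume $f$ takes finitely many values, all in $[\lambda,\Lambda]$ with $\lambda>0$, on its support; fix $k$ and set $F=f^{\otimes k}$ on $\Z^k$.
\item The values of $F$ lie in $[\lambda^k,\Lambda^k]$, so $F\leq\sum_j 2^{j+1}\1_{A_j}$ with $A_j=\{2^j<F\leq2^{j+1}\}$. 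Only $O_f(k)$ dyadic indices $j$ occur, and each $A_j\subseteq\{0,\ldots,n-1\}^k$.
\item Monotonicity on nonnegative functions, the triangle inequality and (c) give $[F]\leq\sum_j 2^{j+1}|A_j|^{1/q}$.
\item Since $2^{jq}|A_j|\leq\|F\|_{\ell^q}^q$, this yields $[F]\leq O_f(k)\,\|F\|_{\ell^q}$.
\item Multiplicativity of both sides under tensor products gives $[f]^k\leq O_f(k)\|f\|_{\ell^q}^k$; taking $k$th roots and letting $k\to\infty$ gives (a).
\end{enumerate}

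The implication (a)$\Rightarrow$(b) is the main obstacle, and I would prove it by induction on $m$. I would write $\Z^m=\Z^{m-1}\times\Z$ and split every $x_j$ and $h_i$ in \eqref{eq:rewriteUdp} into an outer part in $\Z^{m-1}$ and a last coordinate. For fixed outer variables, the inner sum over the last coordinates is a $2^dp$-linear Gowers-type form on $\Z$. Its entries are the fibre functions $f(y_\omega,\cdot)$, where $y_\omega$ ranges over the outer cube vertices.

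The Gowers--Cauchy--Schwarz inequality from Lemma \ref{lm:UdpGCS} bounds this form by $\prod[f(y_\omega,\cdot)]_{\textup{U}^{d,p}(\Z)}$. Hypothesis (a) then bounds it by $\prod F(y_\omega)$, where $F(y):=\|f(y,\cdot)\|_{\ell^q(\Z)}$. The remaining outer sum is exactly $[F]_{\textup{U}^{d,p}(\Z^{m-1})}^{2^dp}$. The induction hypothesis gives $[F]_{\textup{U}^{d,p}(\Z^{m-1})}\leq\|F\|_{\ell^q(\Z^{m-1})}=\|f\|_{\ell^q(\Z^m)}$, which completes the induction. The delicate point is making sure that Lemma \ref{lm:UdpGCS} is the fully multilinear version with $2^dp$ independent functions; that is presumably why the stronger form is needed.

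For the consequences, the set of admissible $q$ in (a) is closed, since the inequality for fixed $f$ passes to limits in $q$. Together with the equivalences above, this shows that both extremal exponents are attained and that \eqref{eq:dcrel} holds.
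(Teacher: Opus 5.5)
Your proof is correct and follows essentially the same route as the paper. The induction for (a)$\Rightarrow$(b) via Lemma~\ref{lm:UdpGCS} is the mirror image of the paper's: you apply the Gowers--Cauchy--Schwarz inequality in the last coordinate and (a) to the fibres, whereas the paper applies it in the first $m-1$ coordinates and (a) to $b\mapsto[f_b]_{\textup{U}^{d,p}(\Z^{m-1})}$. The return to (a) uses the same dyadic level-set decomposition, triangle inequality and tensor power trick. The only real difference is that you decompose $f^{\otimes k}$ for a single fixed $f$. Its nonzero values lie in $[\lambda^k,\Lambda^k]$, which holds automatically because $f$ has finite support, so no approximation is needed. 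Hence only $O_f(k)$ levels occur, and you avoid the remainder term that the paper controls with Theorem~\ref{thm:Lpest}(a) to obtain a bound uniform in $f$.
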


\begin{proof}
We will be rather brief, because the proof follows the same outline as the proofs in \cite{DGIM21} and \cite{BCK25}.
In the present discrete setting, Lemma \ref{lm:UdpGCS} from Appendix \ref{sec:Udp_are_norms} reads 
\begin{align}
\sum_{x_1,\ldots,x_p,h_1,\ldots,h_d\in\Z^m} \prod_{j=1}^p\prod_{\omega=(\omega_1,\ldots,\omega_d)\in\{0,1\}^d} f_{j,\omega}(x_j+\omega_1 h_1+\cdots+\omega_d h_d) & \nonumber \\[-2mm]
\leq \prod_{j=1}^p\prod_{\omega\in\{0,1\}^d} [f_{j,\omega}]_{\textup{U}^{d,p}(\Z^m)} & . \label{eq:dcmix}
\end{align}
Here, all the functions are nonnegative and supported on finite subsets of $\Z^m$. 

We prove (a)$\Rightarrow$(b) by induction on $m$. Slice a function on $\Z^m$ in its last coordinate and write $f_b(a) := f(a,b)$. Expanding the power $[f]_{\textup{U}^{d,p}(\Z^m)}^{2^d p}$ and applying \eqref{eq:dcmix} in the first $m-1$ coordinates bounds it by
\[ \sum_{b_1,\ldots,b_p,l_1,\ldots,l_d\in\Z} \prod_{j=1}^p\prod_{\omega=(\omega_1,\ldots,\omega_d)\in\{0,1\}^d} [f_{b_j+\omega_1 l_1+\cdots+\omega_d l_d}]_{\textup{U}^{d,p}(\Z^{m-1})}. \]
This is the power $2^d p$ of the one-dimensional functional applied to $b\mapsto[f_b]_{\textup{U}^{d,p}(\Z^{m-1})}$. Assertion (a), followed by the induction hypothesis on each slice, therefore gives
\[ [f]_{\textup{U}^{d,p}(\Z^m)} \leq \Bigl(\sum_b [f_b]_{\textup{U}^{d,p}(\Z^{m-1})}^q\Bigr)^{1/q} \leq\|f\|_{\ell^q(\Z^m)}. \]

The implications (b)$\Rightarrow$(c)$\Rightarrow$(d) follow by taking $f=\1_A$ and $C=1$.

It remains to prove (d)$\Rightarrow$(a). By Corollary \ref{cor:Udpnorms} below, the map $f\mapsto[|f|]_{\textup{U}^{d,p}(\Z^m)}$ is a norm. A dyadic decomposition from \cite{DGIM21} or \cite{BCK25} writes $f$ as a sum of $O_{d,p,n}^{m\to\infty}(1+m)$ scalar multiples of indicators of level sets and a remainder that can be controlled by Theorem \ref{thm:Lpest}(a). This decomposition, combined with (d), gives
\begin{equation}\label{eq:cubes_aux}
[f]_{\textup{U}^{d,p}(\Z^m)} \leq O_{d,p,n,C}^{m\to\infty}(1+m) \|f\|_{\ell^q(\Z^m)}. 
\end{equation}
Now we apply \eqref{eq:cubes_aux} to the $m$-fold tensor power of a fixed one-dimensional function $f$. The polynomial loss disappears after taking the $m$th root and letting $m\to\infty$. This proves (a).

It remains to note that condition (a) is finite-dimensional and depends continuously on $q$, so the maximal such exponent $q=q_{d,p,n}$ exists.
\end{proof}

We now prove the three assertions of Theorem \ref{thm:dcres}. We may assume that $p$ is an integer and $p\geq2$, since the case $p=1$ was already studied in \cite{BCK25}.

\begin{proof}[Proof of Theorem \ref{thm:dcres}(a)]
For a nonnegative function $f$ supported on $\{0,1\}$, a direct enumeration of the terms appearing in
\[ \|R_d f\|_{\ell^p(\Z^d)}^p = \sum_{h_1,\ldots,h_d\in\Z} R_d f (h_1,\ldots,h_d)^p \]
gives
\[ [f]_{\textup{U}^{d,p}(\Z)}^{2^d p} = \bigl(f(0)^{2^d}+f(1)^{2^d}\bigr)^p + 2d\bigl(f(0)f(1)\bigr)^{p\,2^{d-1}}. \]
Put $t=\log_2(2^p+2d)$. By Proposition \ref{prop:dcprod}, after setting $x=f(0)^{2^d p/t}$ and $y=f(1)^{2^d p/t}$, it is enough to prove
\begin{equation}\label{eq:dcscalar}
\bigl(x^{t/p}+y^{t/p}\bigr)^p+2d(xy)^{t/2} \leq(x+y)^t
\end{equation}
for $x,y\geq0$.
The cases $x=0$ and $y=0$ of \eqref{eq:dcscalar} hold with equality, so by homogeneity and symmetry, we may suppose that $x\geq y$ and $x+y=1$, and substitute
\[ x=\frac{e^v}{2\cosh v},\quad y=\frac{e^{-v}}{2\cosh v}\quad\text{for } v\geq0. \]
The desired estimate becomes
\[ \Bigl(2\cosh\frac{tv}{p}\Bigr)^p + 2d \leq (2\cosh v)^t, \]
i.e., after taking into account the definition of $t$ and making the substitution $u=tv$,
\[ 2^t \biggl( \Bigl(\cosh \frac{u}{t}\Bigr)^t - 1 \biggr) \geq 2^p \biggl( \Bigl(\cosh\frac{u}{p}\Bigr)^p - 1 \biggr) . \]
The proof of \cite[Lem.\,12]{CKS25} begins by showing that the function $s\mapsto 2^s ((\cosh (u/s))^s-1)$ is increasing on $[2,\infty)$ for every fixed $u>0$. Since $t>p\geq2$, this implies the last displayed inequality, and thus also establishes \eqref{eq:dcscalar}.

Sharpness follows from considering the full binary cube:
\[ \mathcal{P}_{d,p}(\{0,1\}^m)=(2^p+2d)^m. \]
Thus $t_{d,p,2}=\log_2(2^p+2d)$, and Proposition \ref{prop:dcprod} also gives the formula for $q_{d,p,2}$.
\end{proof}

\begin{proof}[Proof of Theorem \ref{thm:dcres}(b)]
We first note that $t_{d,p,n}\to d+p$ as $n\to\infty$. 
Namely, Theorem \ref{thm:Lpest}(a) gives $t_{d,p,n}\leq d+p$, while testing \eqref{eq:dcset} on the full interval of integers gives that $\mathcal{P}_{d,p}(\{0,1,\ldots,n-1\})$ is at least $n^{d+p}$ times a constant depending on $d$ and $p$.
Consequently, $t_{d,p,n}=d+p+O_{d,p}^{n\to\infty}(1/\log n)$.

For $M>1$, define a truncated discrete Gaussian by
\[ \varphi_{M,n}(j)
:=\begin{cases} 
\exp\bigl(-4M^2(j/n-1/2)^2\bigr), & 0\leq j<n,\\
0, & \text{otherwise}.
\end{cases} \]
As in \cite{Shao} or \cite{BCK25}, the convergence of Riemann sums gives
\begin{align*}
n^{-d-p}[\varphi_{M,n}]_{\textup{U}^{d,p}(\Z)}^{2^d p}
& \longrightarrow \bigl[e^{-4M^2(\cdot-1/2)^2}\1_{[0,1]}\bigr]_{\textup{U}^{d,p}(\R)}^{2^d p},\\
n^{-1}\|\varphi_{M,n}\|_{\ell^{q_{d,p,n}}(\Z)}^{q_{d,p,n}}
& \longrightarrow \int_0^1e^{-4M^2 2^d p(x-1/2)^2/(d+p)}\dd x,
\end{align*}
both as $n\to\infty$.
Apply the critical inequality \eqref{eq:ell_critical} to $\varphi_{M,n}$, take logarithms, and pass to these limits. We obtain
\begin{align*}
\liminf_{n\to\infty} \bigl(t_{d,p,n}-d-p\bigr)\log_2n
\geq 2^d p\log_2 \frac{[e^{-x^2}\1_{[-M,M]}]_{\textup{U}^{d,p}(\R)}}{\|e^{-x^2}\1_{[-M,M]}\|_{\textup{L}^{2^d p/(d+p)}(\R)}}.
\end{align*}
As $M\to\infty$, the sharp Gaussian constant from Theorem \ref{thm:sharpRn}(a) shows that the right-hand side tends to
\[ 2^d p\log_2 \Bigl(\frac{2^{2d}p^p}{(d+p)^{d+p}}\Bigr)^{1/(2^{d+1}p)}
= -\frac{(d+p)\log_2(d+p)-p\log_2p-2d}{2}. \]
This proves the lower estimate in \eqref{eq:dclargen}.

For the upper estimate, $0\leq R_d\1_A(h)\leq|A|$ gives
\[ \mathcal{P}_{d,p}(A)
\leq |A|^{p-1}\sum_hR_d\1_A(h)
= |A|^{p-1}\mathcal{P}_{d,1}(A). \]
Therefore $t_{d,p,n}\leq p-1+t_{d,1,n}$. The upper bound in \cite[Thm.\,3]{BCK25}, which in turn relied on the work of Shao \cite{Shao}, now proves the remaining estimate with the same absolute constant $c$.
\end{proof}

\begin{proof}[Proof of Theorem \ref{thm:dcres}(c)]
We simply adapt the entropy argument from \cite[Sec.\,6]{BCK25}. For a finitely supported random variable $Y$, we write
\[ \operatorname{H}(Y) :=-\sum_y\mathbb{P}(Y=y)\log_2\mathbb{P}(Y=y) \]
for its \emph{Shannon entropy} measured in bits or shannons.
If $X_1,\ldots,X_l$ are independent Bernoulli random variables with parameter $1/2$, the $h_i$ are nonzero integers, and $\sum_i |h_i|\leq n-1$, then
\begin{equation}\label{eq:dcent}
\frac{\operatorname{H}(h_1X_1+\cdots+h_lX_l)}{l}
\geq \frac{\operatorname{H}(\operatorname{B}(n-1,1/2))}{n-1},
\end{equation}
while equality is possible only when $l=n-1$ and $|h_1|=\cdots=|h_{n-1}|=1$; see \cite[Prop.\,10 and Cor.\,11]{BCK25}.

For the lower bound, apply the one-dimensional critical inequality to
the function
\[ f(j) := \Bigl(\frac{\binom{n-1}{j}}{2^{n-1}}\Bigr)^{t_{d,p,n}/(2^d p)} \quad\text{for } 0\leq j<n. \]
Its $\ell^{q_{d,p,n}}(\Z)$ norm is one. On the left-hand side, retain only configurations with exactly $n-1$ nonzero increments, all equal to $1$ or $-1$. There are $2^{n-1}\binom{d}{n-1}$ such configurations, and the factor $p$ cancels from every resulting exponent. Hence
\[ 2^{n-1}\binom{d}{n-1} 2^{-\operatorname{H}(\operatorname{B}(n-1,1/2))t_{d,p,n}} \leq1. \]
Taking logarithms gives
\[ \operatorname{H}(\operatorname{B}(n-1,1/2))t_{d,p,n} \geq(n-1)\log_2(2d)-\log_2((n-1)!) + \sum_{j=0}^{n-2}\log_2(1-j/d), \]
which is the required lower asymptotic.

For the upper bound, fix a small $\delta>0$ and set
\[ t = \frac{(n-1)\log_2(2d)-\log_2((n-1)!)}{\operatorname{H}(\operatorname{B}(n-1,1/2))}+\delta. \]
By Proposition \ref{prop:dcprod}, it is enough to prove the one-dimensional inequality
\[ \sum_{h_1,\ldots,h_d\in\Z}\bigl(R_df(h_1,\ldots,h_d)\bigr)^p \leq\Bigl(\sum_{j=0}^{n-1}f(j)^{2^d p/t}\Bigr)^t. \]
Substitute $g(j)=f(j)^{2^d p/t}$ and normalise so that $\sum_{j=0}^{n-1}g(j)=1$. Grouping the increments $h_i$ according to the number $l$ of nonzero ones reduces the left-hand side to
\begin{equation}\label{eq:dcgroups}
\sum_{l=0}^{n-1}\binom{d}{l} \sum_{\substack{h_1,\ldots,h_l\ne0\\
\sum_i |h_i|\leq n-1}} \Bigl(\sum_{a\in\Z} \prod_{\omega\in\{0,1\}^l} g(a+\omega\cdot h)^{t/(p\,2^l)}\Bigr)^p,
\end{equation}
where inadmissible summands are understood to be $0$. The term with $l=0$ is
\[ \Bigl(\sum_{j=0}^{n-1}g(j)^{t/p}\Bigr)^p. \]
When $l=n-1$, each admissible tuple $(h_1,\ldots,h_l)$ consists of signs $h_i\in\{-1,1\}$ and has a unique base point. The
resulting contribution is therefore identical to the dominant contribution in \cite[Sec.\,6]{BCK25}. When $1\leq l\leq n-2$, use Jensen's inequality for the $p$th power. Each resulting term is bounded by $n^{p-1}$ times
\[ \binom{d}{l}\Bigl(\prod_{j=0}^{n-1}g(j)^{\alpha_j}\Bigr)^t, \]
where $(\alpha_0,\ldots,\alpha_{n-1})$ is the distribution of a translate of $h_1X_1+\cdots+h_lX_l$. The weighted arithmetic--geometric mean inequality gives
\[ \prod_jg(j)^{\alpha_j} \leq2^{-\operatorname{H}(h_1X_1+\cdots+h_lX_l)}. \]
By \eqref{eq:dcent}, all terms with $1\leq l\leq n-2$ are $o_{n,p}^{d\to\infty}(1)$, while the total $l=n-1$ contribution is at most $2^{-\operatorname{H}(\operatorname{B}(n-1,1/2))\delta}$.

Finally, we argue as in \cite[Sec.\,6]{BCK25}. Fix
\[ \vartheta :=2^{-n\,2^n\operatorname{H}(\operatorname{B}(n-1,1/2))/(n-1)}. \]
If $g(j)\leq1-\vartheta$ for every $j$, then the $l=0$ term in \eqref{eq:dcgroups} is at most $n^p(1-\vartheta)^t=o_{n,p}^{d\to\infty}(1)$, and the preceding strict bound finishes the proof.
Otherwise, choose $j_0$ with $g(j_0)>1-\vartheta$. Since $t/p\geq2$ for sufficiently large $d$, the $l=0$ term satisfies
\[ \Bigl(\sum_jg(j)^{t/p}\Bigr)^p \leq\Bigl(\sum_jg(j)^2\Bigr)^{t/2} \leq\sum_jg(j)^2. \]
For all sufficiently large $d$, we also have $t/2^n\geq1$.
Every monomial arising from a nonzero tuple $(h_1,\ldots,h_l)$ has two distinct indices $j_1,j_2$ in its support, each carrying weight at least $2^{-l}\geq2^{-n+1}$. At least one of them differs from $j_0$, so the corresponding value of $g$ is at most $\vartheta$. After factoring out one factor of each of $g(j_1)$ and $g(j_2)$, the monomial is bounded by
\[ g(j_1)g(j_2)\vartheta^{t/2^n-1}. \]
After accounting for the binomial coefficients, the total number of terms is $O_{n,p}^{d\to\infty}(d^{n-1})$, and the choice of $\vartheta$ gives
\[ d^{n-1}\vartheta^{t/2^n-1} = O_{n,p,\delta}^{d\to\infty}(d^{-1}). \]
For sufficiently large $d$, their total contribution is therefore at most $2\sum_{i<j}g(i)g(j)$. Adding the $l=0$ term, we bound the expression in \eqref{eq:dcgroups} by
\[ \sum_jg(j)^2+2\sum_{i<j}g(i)g(j) = \Bigl(\sum_jg(j)\Bigr)^2=1. \]
We have proved the desired inequality for all sufficiently large $d$, so the upper asymptotic follows by letting $\delta\to 0$.
\end{proof}


\appendix 

\section{Integer values of \texorpdfstring{$p$}{p} lead to norms}
\label{sec:Udp_are_norms}

This appendix is largely just an adaptation of the arguments by Shkredov \cite[Appen.]{Shkredov23} and we include it for completeness. A notable difference is that the definitions in that paper do not apply literally to $[\,\cdot\,]_{\textup{U}^{d,p}}$ when $G$ is non-compact, since the additional averaging over $G$ is then unavailable.

Let $(G,+)$ be a second-countable locally compact abelian group and $\mu$ its Haar measure. The following lemma generalises the so-called Gowers--Cauchy--Schwarz inequality \cite{Gowers2001,HostKra05,EisnerTao}.

\begin{lemma}\label{lm:UdpGCS}
Fix integers $d\geq2$ and $p\geq1$. Let measurable functions $(f_{j,\omega})$ on $G$ be indexed by pairs $(j,\omega)\in\{1,\ldots,p\}\times\{0,1\}^d$. Suppose either that all the functions $f_{j,\omega}$ are nonnegative or that $p$ is even and all of them are complex-valued. In the latter case we also assume $[|f_{j,\omega}|]_{\textup{U}^{d,p}(G)}<\infty$ for every pair $(j,\omega)$. Then
\begin{align*} 
\biggl| \int_{G^{p+d}} \prod_{j=1}^p \prod_{\omega=(\omega_1,\ldots,\omega_d)\in\{0,1\}^d} f_{j,\omega}\bigl(x_j + \omega_1 h_1 + \cdots + \omega_d h_d\bigr) & \\[-2mm]
\dd\mu(x_1) \cdots \dd\mu(x_p) \dd\mu(h_1) \cdots \dd\mu(h_d) & \biggr|
\leq \prod_{j=1}^p\prod_{\omega\in\{0,1\}^d} [f_{j,\omega}]_{\textup{U}^{d,p}(G)}.
\end{align*}
\end{lemma}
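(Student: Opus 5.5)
The plan is to follow the standard proof of the Gowers--Cauchy--Schwarz inequality: apply Cauchy--Schwarz once in each of the $d$ directions $h_1,\ldots,h_d$, and then apply H\"older's inequality once more across the $p$ base points $x_1,\ldots,x_p$.

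\emph{Step 1: isolate the $h_d$ direction.} Write $\omega=(\omega',\omega_d)$ with $\omega'\in\{0,1\}^{d-1}$ and $h'=(h_1,\ldots,h_{d-1})$. For each $j$ and $\eta\in\{0,1\}$ set
\[ F_{j,\eta}(y) := \prod_{\omega'\in\{0,1\}^{d-1}} f_{j,(\omega',\eta)}(y+\omega'\cdot h'). \]
Shift $x_j$ to $x_j$ and $x_j+h_d$. The left-hand side then becomes
\[ \int_{G^{d-1}} \int_G \prod_{j=1}^p \Bigl(\int_G F_{j,0}(x_j)F_{j,1}(x_j+h_d)\dd\mu(x_j)\Bigr) \dd\mu(h_d)\dd\mu(h'), \]
so for fixed $h'$ the integrand is $\prod_j (F_{j,1}\ast\widetilde{F_{j,0}})(h_d)$.

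\emph{Step 2: Cauchy--Schwarz in a lifted form.} Rewrite the inner $h_d$-integral as an integral over $(x_1,\ldots,x_p,h_d)$ of $\prod_j F_{j,0}(x_j)\cdot\prod_j F_{j,1}(x_j+h_d)$. Change variables to $y_j=x_j+h_d$ and regard it as a pairing between $\Psi_0(x_1,\ldots,x_p):=\prod_j F_{j,0}(x_j)$ and $\Psi_1$ along the diagonal translation. Then apply Cauchy--Schwarz over $(h',\text{differences})$, with the variables reorganised as $x_1,\ x_j-x_1,\ h'$. This gives a bound by the geometric mean of the two expressions obtained by replacing all $\eta=1$ functions by the corresponding $\eta=0$ ones, and vice versa. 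Each of these expressions has exactly the same shape as the original integral, with the family $f_{j,\omega}$ replaced by one that is symmetric in $\omega_d$.

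The even case needs care here. For complex functions this step needs $|\prod_j(\cdot)|$ to be controlled by a square. This is where $p$ even is used: the product over $j$ splits into two blocks of $p/2$ indices, with conjugates appearing as in \eqref{eq:rewriteUdp}, so that the $h_d$-kernel is a positive-definite pairing. The finiteness of $[|f_{j,\omega}|]_{\textup{U}^{d,p}(G)}$ justifies Fubini.

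\emph{Step 3: iterate over the remaining directions.} Repeat Step 2 for $h_{d-1},\ldots,h_1$. After $d$ steps the original integral is bounded by $\prod_{\omega}\Lambda(f_{\cdot,\omega})^{1/2^d}$. Here $\Lambda(g_1,\ldots,g_p)$ denotes the expression in which, for each $j$, every vertex carries the same function $g_j$. This is exactly the power-$2^d$ symmetrisation in the cube direction.

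\emph{Step 4: decouple the $p$ base points.} Observe that
\[ \Lambda(g_1,\ldots,g_p)=\int_{G^d}\prod_{j=1}^p R_d g_j \dd\mu^d. \]
H\"older's inequality with $p$ equal exponents then bounds this by $\prod_j \|R_d g_j\|_{\textup{L}^p(G^d)}=\prod_j[g_j]_{\textup{U}^{d,p}(G)}^{2^d}$. Combining with Step 3 yields the claim.

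The main obstacle is Step 2. One has to find the right grouping of variables so that a single Cauchy--Schwarz produces products that are again of the full $(p,2,\ldots,2)$ form. In the noncompact setting there is no extra averaging to mimic the hypergraph argument of Hatami and Shkredov, so the plan is to first try the substitution $x_j\mapsto x_1+t_j$ for $j\geq2$ and apply Cauchy--Schwarz in $(t_2,\ldots,t_p,h')$ with the remaining variables $(x_1,h_d)$ on the inside. If that fails, I would fall back on first doing H\"older in $j$ (Step 4) and then applying the classical Gowers--Cauchy--Schwarz argument to $R_d$ directly.
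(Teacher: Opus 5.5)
Your argument is correct, and the primary version of Step~2 already works, so the fallback is unnecessary. After $x_j=x_1+t_j$ the integral equals $\int_{G^{d-1}}\int_{G^{p-1}}A_0(t;h')A_1(t;h')\dd\mu^{p-1}(t)\dd\mu^{d-1}(h')$, where $A_\eta(t;h'):=\int_G\Psi_\eta(y,y+t_2,\ldots,y+t_p)\dd\mu(y)$. Undoing the substitution shows that $\int\!\int|A_\eta|^2$ is the original form with both $\omega_d$-slots carrying $f_{j,(\omega',\eta)}$ (conjugated in the slot $\omega_d=1$).

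The overall scheme (Cauchy--Schwarz in each cube direction, then H\"older in $j$) is the $(p,2,\ldots,2)$ H\"older argument of Shkredov and Hatami that the paper's sketch invokes. Quotienting $G^p$ by its diagonal is a clean substitute for the averaging that is unavailable on noncompact $G$, a point the paper leaves implicit.

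The real difference is how the symmetrised forms are matched with $[\,\cdot\,]_{\textup{U}^{d,p}(G)}$, and here your remark on evenness is mistaken. The paper goes through the multilinear identities \eqref{eq:rewriteUdp} and \eqref{eq:rewriteUdp2}. The latter is valid only for even $p$, which is why its sketch restricts the complex case to even $p$.

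In your ordering, with the H\"older step in $j$ last, no $p$th power is ever expanded. The bound $|\int A_0A_1|\leq\|A_0\|_{\textup{L}^2}\|A_1\|_{\textup{L}^2}$ holds verbatim for complex functions. The last step bounds $|\int_{G^d}\prod_jR_dg_j\dd\mu^d|$ by $\prod_j\|R_dg_j\|_{\textup{L}^p(G^d)}=\prod_j[g_j]_{\textup{U}^{d,p}(G)}^{2^d}$ directly from \eqref{eq:defUdp}. Consequently:
\begin{enumerate}[(i)]
\item the ``two blocks of $p/2$ indices'' device plays no role, and it does not even make sense for an arbitrary family $f_{j,\omega}$;
\item your proof yields the complex case for every positive integer $p$;
\item applying the nonnegative case to $|f_{j,\omega}|$ makes each $\|A_\eta\|_{\textup{L}^2}$ finite and justifies Fubini.
\end{enumerate}

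Evenness would be needed in your fallback route. There H\"older in $j$ comes first, and $\int|K_j|^p$ must then be expanded multilinearly before the cube steps. The paper also uses evenness in Corollary~\ref{cor:Udpnorms}.

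A cosmetic point: for complex $F$, the kernel $\int_GF_{j,0}(x)F_{j,1}(x+h)\dd\mu(x)$ is not $F_{j,1}\ast\widetilde{F_{j,0}}$ with the paper's conjugating tilde.
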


\begin{proof}[Sketch of proof]
For nonnegative functions $f$ formula \eqref{eq:rewriteUdp} applies, while for even $p$ and complex-valued $f$ we have
\begin{equation}\label{eq:rewriteUdp2}
\begin{aligned}
[f]_{\textup{U}^{d,p}(G)}^{2^d p} = \int_{G^{p+d}} \prod_{j=1}^p \prod_{\omega=(\omega_1,\ldots,\omega_d)\in\{0,1\}^d} 
\!\!\!\!\!\!\!\!\mathcal{C}^{d-(\omega_1+\cdots+\omega_d)+j} f\bigl(x_j + \omega_1 h_1 + \cdots + \omega_d h_d\bigr) & \\[-2mm]
\dd\mu(x_1) \cdots \dd\mu(x_p) \dd\mu(h_1) \cdots \dd\mu(h_d) & .
\end{aligned}
\end{equation}
The estimate in both cases now follows by successive applications of H\"{o}lder's inequality; cf.\@ \cite[Appen.]{Shkredov23}.
\end{proof}

In the case of compact $G$, one can also deduce the previous lemma directly from the hypergraph H\"{o}lder inequality in \cite{Hatami} for the complete multipartite hypergraph of type $(p,2,\ldots,2)$.

\begin{corollary}\label{cor:Udpnorms}
If $d\geq2$ and $p$ is a positive integer, then $f\mapsto[|f|]_{\textup{U}^{d,p}(G)}$ is a norm on the vector space of $\mu$-a.e.\@ equivalence classes of complex measurable functions such that $[|f|]_{\textup{U}^{d,p}(G)}<\infty$.
If $d\geq1$ and $p$ is an even positive integer, then $f\mapsto[f]_{\textup{U}^{d,p}(G)}$ is also a norm on the same vector space.
\end{corollary}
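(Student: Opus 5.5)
We derive Corollary \ref{cor:Udpnorms} from Lemma \ref{lm:UdpGCS} in the standard way one deduces the triangle inequality for Gowers norms from the Gowers--Cauchy--Schwarz inequality. Homogeneity is immediate: $R_d(cf)=|c|^{2^d}R_d f$, so $[|cf|]_{\textup{U}^{d,p}(G)}=|c|\,[|f|]_{\textup{U}^{d,p}(G)}$, and the same holds for $[f]_{\textup{U}^{d,p}(G)}$. Definiteness follows from the observation in the introduction that $[|f|]_{\textup{U}^{d,p}(G)}=0$ forces $|f|=0$ a.e. For the complex functional with even $p$, we argue as follows. Formula \eqref{eq:rewriteUdp2} together with Lemma \ref{lm:UdpGCS} (applied with all $f_{j,\omega}=|f|$) gives $[f]_{\textup{U}^{d,p}(G)}\leq[|f|]_{\textup{U}^{d,p}(G)}$, so finiteness is inherited. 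Definiteness for $[f]_{\textup{U}^{d,p}(G)}$ requires a separate argument: for even $p$ one has $[f]^{2^dp}_{\textup{U}^{d,p}(G)}=\int|R_d f|^p\geq0$, and $R_d f=0$ a.e. forces, via \eqref{eq:Rmarg} iterated down to $R_1f=f\ast\widetilde f$ at the origin (interpreted through the marginal identity $\int_G R_1 f\,\textup{d}\mu(h)\cdot$ structure, or more cleanly through $\int_{G} R_d f(h_1,\ldots)\,\textup{d}\mu(h_d)=|R_{d-1}f|^2$ integrated in the remaining variables down to $\|f\|_{\textup{L}^2}$-type quantities), that $f=0$. I would check this point carefully, since for complex $f$ only $R_1 f\equiv0$ a.e. does not immediately give $f=0$ on a general LCA group, and I expect this step may instead rest on positivity of $|\widehat{f}|^2$ via Plancherel.

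The triangle inequality is the main step. Take $f,g$ with finite functionals and, in the nonnegative case, expand $[|f+g|]^{2^dp}_{\textup{U}^{d,p}(G)}\leq[|f|+|g|]^{2^dp}_{\textup{U}^{d,p}(G)}$ using \eqref{eq:rewriteUdp} and monotonicity. This multilinear form in $2^dp$ slots expands into $2^{2^dp}$ terms, each of the form in Lemma \ref{lm:UdpGCS} with every $f_{j,\omega}$ equal to $|f|$ or $|g|$. Each such term is bounded by $[|f|]^{k}[|g|]^{2^dp-k}$, where $k$ is the number of slots filled with $|f|$. Summing via the binomial theorem gives $\bigl([|f|]+[|g|]\bigr)^{2^dp}$, and taking the $2^dp$-th root yields the claim. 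For even $p$ and complex $f,g$, expand \eqref{eq:rewriteUdp2} for $f+g$ multilinearly (the conjugations $\mathcal C$ distribute over the sum), apply the complex case of Lemma \ref{lm:UdpGCS} to each of the resulting terms, whose absolute integrability is justified by the finiteness of $[|f|]$ and $[|g|]$, and conclude in the same way.

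It also remains to check that the space is a vector space, that is, that $[|f+g|]<\infty$. This follows from the inequality just proved. Well-definedness on a.e.-classes is clear, since all the quantities involved are integrals.
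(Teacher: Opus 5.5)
Your homogeneity check, the definiteness of $[|f|]_{\textup{U}^{d,p}(G)}$, and the triangle inequality all follow the paper's proof. For the triangle inequality, that means reducing to $|f|+|g|$ by monotonicity, expanding \eqref{eq:rewriteUdp} (or \eqref{eq:rewriteUdp2} when $p$ is even) into $2^{2^dp}$ terms, bounding each term by Lemma \ref{lm:UdpGCS}, and summing with the binomial theorem. The bound $[f]_{\textup{U}^{d,p}(G)}\leq[|f|]_{\textup{U}^{d,p}(G)}$ needs no lemma: it is just $|R_df|\leq R_d|f|$ pointwise.

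The step you leave open is definiteness of $[f]_{\textup{U}^{d,p}(G)}$ for complex $f$, and it is a real, if small, gap. Your descent does stall, as you suspect. Iterating \eqref{eq:Rmarg} from $R_df=0$ a.e.\ reaches $R_1f=0$ a.e., and one more marginal only gives $\int_G f\dd\mu=0$.

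The paper does not descend. As it does for $[|f|]_{\textup{U}^{d,p}(G)}$, it passes to an ordinary Gowers norm of degree at least two, which is known to be definite. Concretely, $R_df=0$ a.e.\ gives $\|f\|_{\textup{U}^{d}(G)}^{2^d}=\int_{G^d}R_df\dd\mu^d=0$ when $d\geq2$. Alternatively, $\|f\|_{\textup{U}^{d+1}(G)}^{2^{d+1}}=\int_{G^d}|R_df|^2\dd\mu^d=0$ via \eqref{eq:RL12}, which also covers $d=1$. Your Plancherel idea is exactly the case $\int_G|R_1f|^2\dd\mu=\|f\|_{\textup{U}^2(G)}^4$, so one more step would have finished your route.

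These identities need absolute convergence, i.e.\ finiteness of $[|f|]_{\textup{U}^{d,1}(G)}$ or $[|f|]_{\textup{U}^{d,2}(G)}$. In general this does not follow from $[|f|]_{\textup{U}^{d,p}(G)}<\infty$ once $p\geq4$. A route that needs nothing extra is Lemma \ref{lm:UdpGCS} itself:
\begin{itemize}
\item Put $f$ in the slot $(1,\mathbf{0})$ and $\1_{y+B}$ in every other slot, with $B$ a small open neighbourhood of $0$.
\item This gives $\int_G f(x)\Phi_B(x-y)\dd\mu(x)=0$ for all $y\in G$, where $\Phi_B(x)=\int_{G^d}(R_d\1_B(h))^{p-1}\prod_{\omega\neq\mathbf{0}}\1_B(x+\omega\cdot h)\dd\mu^d(h)$.
\item $\Phi_B$ is nonnegative, has positive integral, and is supported in $B+B-B$.
\item The same estimate applied to $|f|$ shows that $f$ is locally integrable.
\end{itemize}
An approximate-identity argument, letting $B$ shrink, then forces $f=0$ a.e.
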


The classical Gowers norms $\|\cdot\|_{\textup{U}^{d}(G)}$ also formally appear in the second statement above, not via their representation $[\,\cdot\,]_{\textup{U}^{d,1}(G)}$, but rather as $[\,\cdot\,]_{\textup{U}^{d-1,2}(G)}$.

\begin{proof}[Sketch of proof]
We have $[|f|]_{\textup{U}^{d,p}(G)}=0$ if and only if $R_d |f|$ vanishes $\mu^d$-a.e.\@ on $G^d$, which, in turn, happens if and only if
\[ \bigl\| |f| \bigr\|_{\textup{U}^d(G)} = \bigl\| R_d |f| \,\bigr\|_{\textup{L}^1(G^d)}^{1/2^d} = 0. \]
By the known properties of the usual Gowers norms this means that $f$ equals zero $\mu$-a.e.~on $G$. 
Similar reasoning carries over to the assumption $[f]_{\textup{U}^{d,p}(G)}=0$.

Thus, the only remaining nontrivial property is the triangle inequality. For the first assertion, monotonicity and $|f_1+f_2|\leq|f_1|+|f_2|$ reduce the claim to nonnegative functions $f_1$ and $f_2$, in which case we apply \eqref{eq:rewriteUdp} to $f_1+f_2$. For the second assertion, we apply \eqref{eq:rewriteUdp2} to the sum of two complex-valued functions $f_1$ and $f_2$ when $p$ is even. Expansion by multilinearity produces $2^{2^d p}$ terms. Applying Lemma \ref{lm:UdpGCS} to each resulting term and summing the bounds yields
\[ [f_1+f_2]_{\textup{U}^{d,p}(G)}^{2^d p}
\leq\bigl([f_1]_{\textup{U}^{d,p}(G)}+[f_2]_{\textup{U}^{d,p}(G)}\bigr)^{2^d p}, \]
which proves the triangle inequality.
\end{proof}


\section{Non-integer values of \texorpdfstring{$p$}{p} do not lead to norms}
\label{sec:Udp_not_norms}

Here is a counterexample to the triangle inequality for $[|\cdot|]_{\textup{U}^{d,p}}$ when $p$ is not an integer.

\begin{proposition}
Consider the functions $f_1,f_2$ on $\T$ defined by
\begin{align*} 
f_1(x) & := 1 + a\cos (2\pi x) + b\cos(2N\pi x), \\
f_2(x) & := 1 + a\cos (2\pi x) - b\cos(2N\pi x).
\end{align*}
For every $d\geq2$ and $p\in(0,\infty)\setminus\N$, there exist $N\in\N$ and $a,b>0$ such that $a+b<1$ and $f_1,f_2$ violate the triangle inequality for $[\,\cdot\,]_{\textup{U}^{d,p}}$ in the sense of \eqref{eq:triangle_fails}.
\end{proposition}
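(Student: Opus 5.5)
The plan is to compare $f_1$ and $f_2$ with their common ``majorant-type'' part $g:=1+a\cos(2\pi x)$. Since $f_1+f_2=2g$ and $[\,\cdot\,]_{\textup{U}^{d,p}(\T)}$ is $1$-homogeneous, we have $[f_1+f_2]_{\textup{U}^{d,p}(\T)}=2[g]_{\textup{U}^{d,p}(\T)}$. It therefore suffices to find parameters with
\[ [f_1]_{\textup{U}^{d,p}(\T)}<[g]_{\textup{U}^{d,p}(\T)} \quad\text{and}\quad [f_2]_{\textup{U}^{d,p}(\T)}<[g]_{\textup{U}^{d,p}(\T)}. \]
In other words, we want adding $\pm b\cos(2\pi Nx)$ to strictly \emph{decrease} $\|R_d(\cdot)\|_{\textup{L}^p(\T^d)}$. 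This is exactly the Hardy--Littlewood majorant phenomenon, transplanted to $R_d$.

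\emph{Step 1 (decoupling as $N\to\infty$).} Expand $R_df_k$ via Fourier series: $R_df_k(h)=\sum\prod_\omega c_{\xi_\omega}\,e(\sum_\omega \xi_\omega\,\omega\cdot h)$, summed over frequency labellings $(\xi_\omega)$ of the cube with signed sum zero. Sort the terms by which vertices carry frequency $\pm N$. For $N$ large, a labelling can balance only if the $\pm N$ labels cancel among themselves. Hence
\[ R_df_k(h)=F(h)+G_\pm(h,Nh), \]
where $F=R_dg$ and $G_\pm(h,\theta)$ is a fixed trigonometric polynomial in $(h,\theta)\in\T^d\times\T^d$. Every term of $G_\pm$ carries at least $b^2$, and $G_+=G_-$ because $\pm N$ labels appear in cancelling pairs. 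By equidistribution of $(h,Nh)$, we get
\[ \|R_df_k\|_p^p\to\iint|F(h)+G(h,\theta)|^p\,dh\,d\theta \quad\text{as } N\to\infty. \]
This limit is the same for $k=1,2$. So we need only show $\iint|F+G|^p<\int F^p$ for a suitable choice of $a,b$.

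\emph{Step 2 (Taylor/binomial expansion).} Since $a+b<1$, both $F$ and $F+G$ are positive and close to their means, so we may expand
\[ (F+G)^p=\sum_k\binom{p}{k}\,\mu^{p-k}(F+G-\mu)^k, \]
with $\mu$ the constant term, and likewise for $F^p$. The difference $\iint(F+G)^p-\int F^p$ is then a power series in $a,b$ with coefficients $\binom pk$ times averages of products. The key is to isolate its leading term. Low-order terms in $b$ that appear after averaging in $\theta$ (for instance, terms quadratic in the oscillating part) have coefficient $\binom p2>0$-type signs and could make the difference positive. Following Bachelis and Green--Ruzsa, I would choose $a$ and $b$ on different scales, e.g.\ $b=a^{\kappa}$ with $\kappa$ tuned to $p$. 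The aim is that all contributions of order $k\leq\lfloor p\rfloor$ cancel exactly, because the averages of the $k$-th moments agree for $F$ and $F+G$ when $k$ is small. This should follow from a combinatorial identity: for low $k$, $\int(F+G-\mu)^k$ counts the same frequency configurations as the corresponding $R$-moment of $g$ plus $b$-terms, and with a sign-symmetric $\pm b$ these match up to the needed order. The first non-cancelling term should then occur at $k=\lfloor p\rfloor+1$ or $\lfloor p\rfloor+2$, where $\binom pk$ has sign $(-1)^{k-\lceil p\rceil}$ because $p\notin\N$. Choosing which of the two parities survives (by adjusting $\kappa$ or including an extra cancellation) makes the leading term negative.

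\emph{Main obstacle.} The hard part is Step 2's bookkeeping. We must verify that the first surviving moment difference has a definite nonzero sign, and that it dominates all remaining terms. This requires identifying exactly which cube labellings contribute at the critical order and checking that their count is nonzero for every $d\geq2$. I would first try the regime $a\to0$ with $b=a^{\kappa}$ and compute the critical coefficient explicitly for $d=2$, then argue that the same frequency pattern (one edge-direction carrying the $\pm N$ pair) survives for general $d$. The non-integrality of $p$ enters only through the sign of $\binom pk$. Finally, fix $a,b$ and then choose $N$ large enough that the Step 1 limit error is smaller than the gap.
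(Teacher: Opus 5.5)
Your Step 1 is correct. For $N>2^d$ the $\pm N$ labels must cancel among themselves, so $R_df_1=R_df_2=F+G(\cdot,N\cdot)$. In fact, after identifying $(\T^2)^d$ with $\T^d\times\T^d$, one has $F+G=R_d\tilde f$ for $\tilde f(x,y)=1+a\cos(2\pi x)+b\cos(2\pi y)$ on $\T^2$, so your limit is $[\tilde f]_{\textup{U}^{d,p}(\T^2)}^{2^dp}$.

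The gap is in Step 2: with $a,b$ fixed and $N\to\infty$, the inequality you need is false for every $p>1$. The $\theta$-mean of $G(h,\theta)$ keeps exactly the labellings whose $\pm N$ labels sit on a nonempty vertex set $S$ with signs $\zeta_\omega$ satisfying $\sum_{\omega\in S}\zeta_\omega=0$ and $\sum_{\omega\in S}\zeta_\omega\,\omega=0$. Each such labelling contributes $(b/2)^{|S|}\int_\T\prod_{\omega\notin S}g(x+\omega\cdot h)\dd x\geq0$. Hence $\int_{\T^d}G(h,\theta)\dd\theta\geq0$ for every $h$, and Jensen's inequality in $\theta$ gives, for $p\geq1$,
\[ \int_{\T^d}\bigl(F(h)+G(h,\theta)\bigr)^p\dd\theta\geq\Bigl(F(h)+\int_{\T^d}G(h,\theta)\dd\theta\Bigr)^p\geq F(h)^p. \]
So $\iint(F+G)^p\geq\int F^p$. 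In your limit the triangle inequality holds, whatever scaling $b=a^\kappa$ you choose. For $0<p<1$ Jensen reverses and this obstruction disappears, but Step 2 is still only a heuristic there. The cancellation you hope for is also absent. The moments of $F+G$ and $F$ already differ at first order, since $\iint G>0$; for $d=2$ it equals $\|\tilde f\|_{\textup{U}^2(\T^2)}^4-\|g\|_{\textup{U}^2(\T)}^4=b^4/8$.

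The missing idea is that the violation exists only at finite $N$. It comes from the interaction between the frequency $N$ and the high-frequency Fourier content of $(R_dg)^{p-1}$, which is exactly what your equidistribution step averages away. The paper fixes $N>2^d$ and expands in $b$ first:
\[ [f_k]_{\textup{U}^{d,p}(\T)}^{2^dp}=[g]_{\textup{U}^{d,p}(\T)}^{2^dp}+pb^2\Lambda_N(a)+O(b^4). \]
Here $\Lambda_N(a)=\int_{\T^d}(R_dg)^{p-1}G_2(h,Nh)\dd h$, where $G_2$ is the $b^2$-part of $G$. Since $\int_{\T^d}G_2(h,\theta)\dd\theta=0$, this is precisely the term your Step 1 kills.

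The paper then expands $\Lambda_N(a)=2^{-2N-1}c_Na^{2N}+O(a^{2N+2})$, with $c_N$ the Taylor coefficients of the generating function $\mathcal{K}_{d,p}$. It shows $c_N<0$ for infinitely many $N$ as follows.
\begin{itemize}
\item Otherwise $\mathcal{K}_{d,p}$ minus a polynomial would have nonnegative coefficients.
\item It would also continue analytically along $(0,\infty)$, so by Pringsheim's theorem it would be entire.
\item Having polynomial growth on the positive axis, it would then be a polynomial.
\item This contradicts the non-integer power $x^{2^{s_0}p-1}$ in its asymptotics.
\end{itemize}
So non-integrality of $p$ must enter through the signs of infinitely many high-order coefficients, not through the sign of a single $\binom pk$. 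The limits must be taken as $b\to0$ and then $a\to0$ at a fixed $N$ chosen from those coefficients.
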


The counterexample is adapted from \cite{Boas62} and \cite[Sec.\,7.7,\,\S3]{Montgomery}; see also the applications of Pringsheim's theorem in \cite{Boas}.
Figures \ref{fig:gowers_triangle} and \ref{fig:gowers_triangle2}, produced in Mathematica \cite{Mathematica}, illustrate choices of the parameters $N,a,b$ for which \eqref{eq:triangle_fails} holds for ``most'' values of $p$ in the intervals $(1,2)$ and $(2,3)$, respectively.

\begin{figure}
\begin{center}
\includegraphics[width=0.45\linewidth]{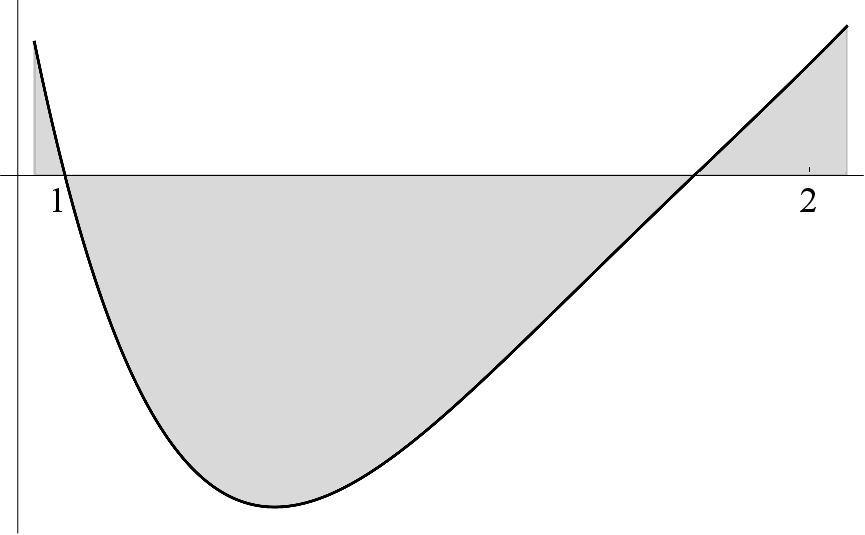}\hspace*{0.5cm}
\includegraphics[width=0.45\linewidth]{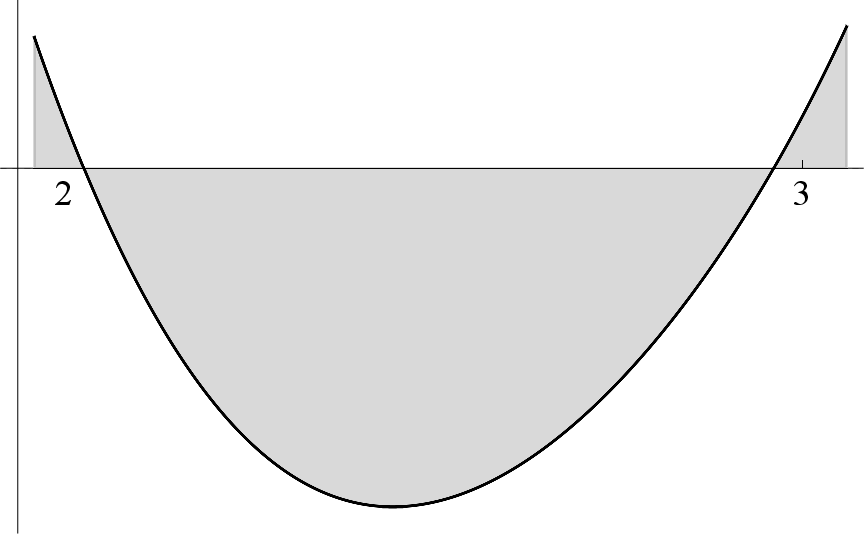}
\end{center}
\caption{Graphs of $p\mapsto [f_1]_{\textup{U}^{2,p}} + [f_2]_{\textup{U}^{2,p}} - [f_1+f_2]_{\textup{U}^{2,p}}$ for $N=4$, $a=24/25$, $b=3/100$ (left) and $N=6$, $a=199/200$, $b=1/500$ (right).}
\label{fig:gowers_triangle}
\end{figure}

\begin{figure}
\begin{center}
\includegraphics[width=0.45\linewidth]{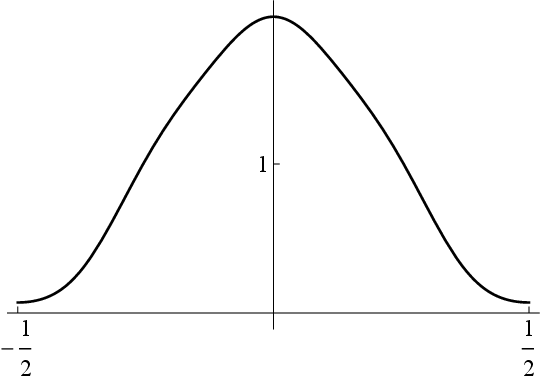}\hspace*{0.5cm}
\includegraphics[width=0.45\linewidth]{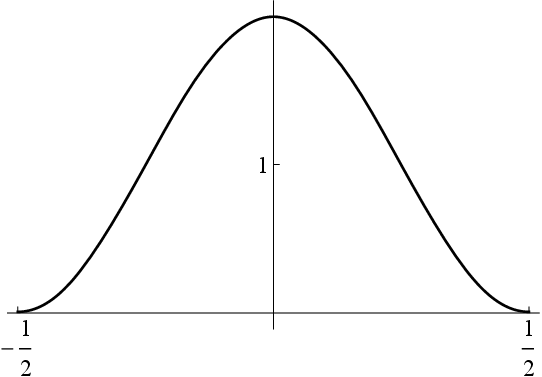}
\end{center}
\caption{Graphs of $f_1$ for the same choices of parameters as in Figure \ref{fig:gowers_triangle}.}
\label{fig:gowers_triangle2}
\end{figure}

\begin{proof}[Sketch of proof]
For fixed $N\in\N$ with $N>2^d$ and $a\in(0,1)$, it is not difficult to compute the following expansions in the small parameter $b>0$:
\[ [f_1]_{\textup{U}^{d,p}(\T)}^{p 2^d} = [f_2]_{\textup{U}^{d,p}(\T)}^{p 2^d} = [1+a\cos(2\pi\,\cdot)]_{\textup{U}^{d,p}(\T)}^{2^d p} + p b^2 \Lambda_N(a) + O_{d,p,N,a}^{b\to0}(b^4), \]
where
\begin{align*}
\Lambda_N(a) := \frac{1}{4} \sum_{\substack{\alpha,\beta\in\{0,1\}^d\\ \alpha\neq\beta}} \int_{\T^d}
& \bigl(R_d(1+a\cos(2\pi\,\cdot\,))(h)\bigr)^{p-1} \cos\bigl(2\pi N(\alpha-\beta)\cdot h\bigr) \\[-2mm]
& \biggl(\int_{\T} \prod_{\omega\in\{0,1\}^d\setminus\{\alpha,\beta\}} \bigl(1+a\cos(2\pi(x+\omega\cdot h))\bigr)\dd x\biggr) \dd h_1\cdots\dd h_d.
\end{align*}
The condition $N>2^d$ eliminates the odd powers of $b$. 
Taking into account that
\[ [f_1+f_2]_{\textup{U}^{d,p}(\T)} = 2[1+a\cos(2\pi\,\cdot\,)]_{\textup{U}^{d,p}(\T)}, \]
we see that we only need to ensure $\Lambda_N(a)<0$, since any sufficiently small $b>0$ will then be a good choice.

Next, for a fixed $N$, one can compute the asymptotic expansion of $\Lambda_N(a)$ for small $a>0$:
\begin{equation}\label{eq:Lambdacoeff}
\Lambda_N(a) = \frac{a^{2N}}{2^{2N+1}} c_N + O_{d,p,N}^{a\to0}(a^{2N+2}),
\end{equation}
where the numbers $c_n$ are defined as follows.
For a nonnegative integer $s$, a point $y\in\T^s$, and a tuple $\eta_0\in\{0,1\}^s$, we introduce the polynomials
\begin{align*}
Q_{s,y}(z) & := \sum_{j=0}^{2^s} \biggl| \sum_{\substack{E\subseteq\{0,1\}^s\\ |E|=j}} \exp\Bigl(2\pi i\sum_{\eta\in E}\eta\cdot y\Bigr) \biggr|^2 z^j, \\
S_{s,y,\eta_0}(z) & := \sum_{j=0}^{2^s-1} \biggl|\sum_{\substack{E\subseteq \{0,1\}^s\setminus\{\eta_0\}\\ |E|=j}} \exp\Bigl(2\pi i\sum_{\eta\in E}\eta\cdot y\Bigr) \biggr|^2 z^j.
\end{align*}
For any two distinct tuples $\alpha,\beta\in\{0,1\}^d$, put
\[ s_{\alpha,\beta} := d - \text{Hamming distance between } \alpha \text{ and } \beta. \]
Restrict $\alpha$ and $\beta$ to the coordinates on which they agree, and denote their common restriction by $\eta_{\alpha,\beta}\in\{0,1\}^{s_{\alpha,\beta}}$. Then $(c_n)_{n=0}^\infty$ is defined as the sequence of coefficients in the expansion about $z=0$ of the analytic function
\[ \mathcal{K}_{d,p}(z) := \frac{1}{2} \sum_{\substack{\alpha,\beta\in\{0,1\}^d\\ \alpha\neq\beta}} \int_{{\T}^{s_{\alpha,\beta}}} Q_{s_{\alpha,\beta},y}(z)^{p-1} S_{s_{\alpha,\beta},y,\eta_{\alpha,\beta}}(z) \dd y
= \sum_{n=0}^{\infty} c_n z^n. \]
When evaluating the power $(\cdot)^{p-1}$, we choose the branch near the origin that takes the value $1$ at $z=0$.
Integration over $\T^0$ means evaluation at its unique point.
In view of \eqref{eq:Lambdacoeff}, it remains only to show that $c_n<0$ for arbitrarily large $n$.

Suppose, for a contradiction, that $c_n\geq0$ for all sufficiently large indices $n$. 
The coefficients of $Q_{s,y}(z)$ and $S_{s,y,\eta_0}(z)$ are nonnegative, and $Q_{s,y}(x)\geq1$ for $x\geq0$. Consequently, for each $x_0>0$ there is a complex neighbourhood of $x_0$ on which all the polynomials $Q_{s,y}(z)$ are nonzero. Choosing the branch that is positive on the positive real axis shows that $\mathcal{K}_{d,p}$ continues analytically through every point of that axis. After subtracting a polynomial, the remaining power series has nonnegative coefficients. 
If its radius of convergence was finite, Pringsheim's theorem would force a singularity at a positive real number, contrary to the preceding analytic continuation. Hence that power series converges to an entire function, which has at most polynomial growth on the positive real axis. An entire power series with nonnegative coefficients and at most polynomial growth on the positive reals must itself be a polynomial. 
Thus, it remains to rule out the possibility that $\mathcal{K}_{d,p}$ is a polynomial.

The leading coefficients of both $Q_{s,y}$ and $S_{s,y,\eta_0}$ are $1$, while their remaining coefficients are uniformly bounded in $y$. Consequently, for every positive integer $J$,
\[ Q_{s,y}(x)^{p-1}S_{s,y,\eta_0}(x) = x^{2^sp-1} \biggl(\sum_{j=0}^{J-1}A_{s,j,y,\eta_0}x^{-j}+O_{d,p,J}^{x\to+\infty}(x^{-J})\biggr), \]
uniformly in $y\in\T^s$, with $A_{s,0,y,\eta_0}=1$. Thus, if $\mathcal{K}_s$ denotes the sum of the terms in $\mathcal{K}_{d,p}$ for which $s_{\alpha,\beta}=s$, then
\[ \mathcal{K}_s(x) = x^{2^sp-1} \biggl( C_s+\sum_{j=1}^{J-1}C_{s,j}x^{-j}+O_{d,p,J}^{x\to+\infty}(x^{-J}) \biggr) \]
for every $J$, where $C_s>0$.
Since $p\not\in\N$, we can choose the largest $s_0\in\{0,1,\ldots,d-1\}$ such that $2^{s_0}p\not\in\Z$. For $s<s_0$ we have 
\[ \mathcal{K}_s(x) = o_{d,p,s}^{x\to+\infty}(x^{2^{s_0}p-1}), \]
while for $s>s_0$ all powers in the preceding expansion are integers. Taking sufficiently many terms in those finitely many expansions and collecting the terms of order at least $x^{2^{s_0}p-1}$, we obtain a polynomial $P$ such that
\[ \mathcal{K}_{d,p}(x) - P(x) = C_{s_0} x^{2^{s_0}p-1} + o_{d,p}^{x\to+\infty}(x^{2^{s_0}p-1}). \]
The exponent of $x^{2^{s_0}p-1}$ is not an integer and $C_{s_0}>0$, so no polynomial can have the stated asymptotic behaviour and we arrive at a contradiction.

We conclude that $c_N<0$ for arbitrarily large $N$. Choose one such $N>2^d$. By \eqref{eq:Lambdacoeff} we have $\Lambda_N(a)<0$ for every sufficiently small $a>0$. After fixing one such $a\in(0,1)$, we choose $b>0$ sufficiently small that $a+b<1$ and the expansions in $b$ give \eqref{eq:triangle_fails}.
\end{proof}

\section*{Declaration of AI usage}

OpenAI's ChatGPT 5.6 Sol was used for copy-editing and several rounds of proofreading of the finished manuscript. It also wrote the Mathematica code used to produce the figures. The ideas, the proofs, and the manuscript text are entirely the work of the authors.


\section*{Acknowledgements and funding}

Much of this collaboration took place in September and October 2024. The authors are grateful to the Inter-University Centre in Dubrovnik and the University of Zagreb Faculty of Science for their hospitality. 

V.\,K. was supported in part by the Croatian Science Foundation under the project HRZZ-IP-2022-10-5116 (FANAP) and in part by the European Union -- NextGenerationEU through the National Recovery and Resilience Plan 2021--2026, via an institutional grant from the University of Zagreb Faculty of Science, IK IA 1.1.3, \emph{Impact4Math}.
K.\,R. was supported by the MICINNU grants CEX2023-001347-S, PID2021-124195NB-C33 and PID2024-158664NB-C22.


\bibliographystyle{plainurl}
\bibliography{gen_Gow_fun}

\end{document}